\providecommand{\tabularnewline}{\\}
\DeclareRobustCommand{\lyxsout}[1]{\ifx\\#1\else\sout{#1}\fi}
\numberwithin{equation}{section}
\numberwithin{figure}{section}
\theoremstyle{plain}
\newtheorem{thm}{\protect\theoremname}[section]
\theoremstyle{plain}
\newtheorem{cor}[thm]{\protect\corollaryname}
\theoremstyle{definition}
\newtheorem{defn}[thm]{\protect\definitionname}
\theoremstyle{remark}
\newtheorem{rem}[thm]{\protect\remarkname}
\theoremstyle{plain}
\newtheorem{lem}[thm]{\protect\lemmaname}
\theoremstyle{plain}
\newtheorem{prop}[thm]{\protect\propositionname}
\theoremstyle{remark}
\newtheorem{examp}[thm]{Example}
\journal{~}
\pgfplotsset{compat=1.15}
\renewcommand{\rho}{\varrho}
\newcommand{\idvtex}{\mathrm{id}}
\DeclareMathOperator{\dom}{dom}
\DeclareMathOperator{\diam}{diam}
\DeclareMathOperator{\supp}{supp}
\DeclareMathOperator{\card}{card}
\DeclareMathOperator{\var}{var}
\renewcommand{\tilde}{\widetilde}
\renewcommand{\phi}{\varphi}
\def\reftext{}
\definecolor{lightgray}{gray}{0.9}
\providecommand{\corollaryname}{Corollary}
\providecommand{\definitionname}{Definition}
\providecommand{\lemmaname}{Lemma}
\providecommand{\propositionname}{Proposition}
\providecommand{\remarkname}{Remark}
\providecommand{\theoremname}{Theorem}
\begin{document}

\begin{frontmatter}{}

\title{Spectral asymptotics of Kre\u{\i}n-Feller operators for weak Gibbs
measures on self-conformal fractals with overlaps}

\author{Marc~Kesseböhmer\fnref{fn1}\corref{cor1}}

\ead{mhk@uni-bremen.de}

\author{Aljoscha~Niemann\corref{cor1}}

\ead{niemann1@uni-bremen.de}

\address{Fachbereich 3 -- Mathematik und Informatik, University of Bremen,
Bibliothekstr. 5, 28359 Bremen, Germany}

\cortext[cor1]{Corresponding authors}
\begin{abstract}
We study the spectral dimensions and spectral asymptotics of Kre\u{\i}n-Feller
operators for weak Gibbs measures on self-conformal fractals with
or without overlaps. We show that, restricted to the unit interval,
the $L^{q}$-spectrum for every weak Gibbs measure $\rho$ with respect
to a $\mathcal{C}^{1}$-IFS exists as a limit. Building on recent
results of the authors, we can deduce that the spectral dimension
with respect to a weak Gibbs measure exists and equals the fixed point
of its $L^{q}$-spectrum. For an IFS satisfying the open set condition,
it turns out that the spectral dimension equals the unique zero of
the associated pressure function. Moreover, for a Gibbs measure with
respect to a $\mathcal{C}^{1+\gamma}$-IFS under OSC, we are able
to determine the asymptotics of the eigenvalue counting function.
\end{abstract}
\begin{keyword}
Kre\u{\i}n-Feller operator\sep spectral dimension \sep $L^{q}$-spectrum
\sep spectral asymptotics\sep (weak) Gibbs measure \MSC[2010] 35P20;
35J05; 28A80; 42B35; 45D05
\end{keyword}

\end{frontmatter}{}

\tableofcontents{}
\section{Introduction and statement of main results}
\label{sec1}

We investigate the spectral properties under Dirichlet boundary conditions
of the classical Kre\u{\i}n--Feller operator $\Delta _{\varrho }$ for weak
Gibbs measures $\varrho $ with respect to a conformal iterated function
systems on $\left [0,1\right ]$ with or without overlaps (see Section~\ref{sec:Conformal-iterated-function}).
Spectral properties of the operator $\Delta _{\varrho }$ have attracted
much attention in the last century, beginning with Feller
\citep{Fe57}, Kac \citep{MR0107037}, Hong and Uno \citep{MR118891}, McKean
and Ray \citep{MR146444}, Kotani and Watanabe \citep{MR661628}, Fujita
\citep{Fu87}, Solomyak and Verbitsky \citep{MR1328700} and more recently
by Vladimirov and She\u{\i}pak \citep{MR3185121}, Faggionato
\citep{MR2892328}, Arzt \citep{Arzt_diss,A15b}, Ngai
\citep{MR2828537}, Ngai, Tang and Xie \citep{MR3809018,MR4176086},  Freiberg,
Minorics \citep{MR4048458,Freiberg:aa}, and by the authors in
\citep{KN2022,KN2022b,KN21b}.

In the framework of the weak approach starting from the Dirichlet form
$\mathcal{E}_{\varrho }$ as defined in Section~\ref{sec:Dirichlet-forms-for}
it is well known that there exists an orthonormal system of eigenfunctions
of $\Delta _{\varrho }$ with non-negative eigenvalues
$\left (\lambda _{\varrho }^{n}\right )_{n\in \mathbb{N}}$ in increasing
order tending to $\infty $ whenever the support $\supp (\varrho )$ of
$\varrho $ is not a finite set. We denote the number of eigenvalues of
$\Delta _{\varrho }$ not exceeding $x\geq 0$ by
$N_{\varrho }\left (x\right )$ and refer to $N_{\varrho }$ as the
\emph{eigenvalue counting function}. We define the upper and lower exponent
of divergence by
\begin{equation*}
\underline{s}_{\varrho }\coloneqq \liminf _{x\rightarrow \infty }
\frac{\log \left (N_{\varrho }(x)\right )}{\log (x)}\quad \text{and }
\;\overline{s}_{\varrho }\coloneqq \limsup _{x\to \infty }
\frac{\log \left (N_{\varrho }(x)\right )}{\log (x)}
\end{equation*}
and refer to these numbers as the \emph{upper}, resp. \emph{lower},
\emph{spectral} \emph{dimension}. If the two values coincide we denote the
common value by $s_{\varrho }$ and call it the
\emph{spectral dimension} of $\Delta _{\varrho }$, or of
$\mathcal{E}_{\varrho }$, respectively. Note that, we always have
$\overline{s}_{\varrho }\leq 1/2$, which has been shown in
\citep{MR0209733,MR0217487}. If $\varrho $ has an non-trivial absolutely
continuous part $\sigma \Lambda |_{[0,1]}$ and a singular part
$\eta $ with $\eta \left (\left \{  0,1\right \}  \right )=0$, then we have
\begin{equation*}
\lim _{x\to \infty }
\frac{N_{\eta +\sigma \Lambda |_{[0,1]}}(x)}{x^{1/2}}=\frac{1}{\pi }
\int _{[0,1]}\sqrt{\sigma }\;\mathrm{d}\Lambda ,
\end{equation*}
where $\Lambda $ denotes the Lebesgue measure on $\mathbb{R}$ (see
\citep{MR146444,MR0278126}). In particular, the spectral dimension equals
$s_{\eta +\sigma \Lambda }=1/2$. A first account for smooth densities and
no singular part is contained in the famous work \citep{Weyl:1912} of Weyl.

In the case of self-similar measures $\varrho $ under the open set condition
(OSC) with contraction rates
$r_{1},\dots ,r_{n}\in \left (-1,1\right )$ and probability weights
$p_{1},\dots ,p_{n}\in \left (0,1\right )$, $n\geq 2$, it has been shown
in \citep{MR118891,Fu87,MR1298682} that the spectral dimension
$s_{\varrho }$ is given by the unique $q>0$ such that
\begin{equation}
\sum _{i=1}^{n}\left (p_{i}\left |r_{i}\right |\right )^{q}=1.
\label{eq:SpectralDimensionSelfSimiliar}
\end{equation}
We will generalize this result in three ways.
\begin{itemize}
\item We provide a first contribution to the nonlinear setting in a broad
sense. More specifically, we consider weak Gibbs measures on fractals which
are generated by non-trivial  $\mathcal{C}^{1}$ iterated function systems ($\mathcal{C}^{1}$-IFS) under the OSC. It turns
out that the spectral dimension is given by the zero of the associated
pressure function (see \reftext{(\ref{eq:Pressure})}) which is a natural generalization
of \reftext{(\ref{eq:SpectralDimensionSelfSimiliar})}.
\item As a second novelty, we drop the assumption of the OSC and allow
overlaps. In this situation the computation of the spectral dimension is
much more involved compared to \reftext{(\ref{eq:SpectralDimensionSelfSimiliar})}.
However, building on ideas developed in \citep{KN2022} combined with results
from \citep{MR1838304,MR2322179}, we are able to specify the spectral dimension
as the fixed point of the associated $L^{q}$-spectrum as defined below
in \reftext{(\ref{eq:Lq-spectrum})}. Finally, using a recent result of
\citep{Barral2020}, for the self-similar case with possible overlaps and
some additional assumption, we can express the spectral dimension in terms
of $\tau $ which is implicitly given by
$\sum _{i=1}^{n}p_{i}^{q}\left |r_{i}\right |^{\tau (q)}=1$.
\item Our final contribution to the nonlinear setting concerns Gibbs measures
on fractals generated by $\mathcal{C}^{1+\gamma }$-IFS's under the OSC. For
this class, we are able to prove spectral asymptotics using renewal theory
developed for a dynamical context, as in \citep{MR3881115,MR3600649}.
\end{itemize}
Investigating the spectral dimension of $\Delta _{\varrho }$, the authors
have recently shown in \citep{KN2022} that the
\emph{$L^{q}$-spectrum} $\beta _{\varrho }$ of $\varrho $ carries the crucial
information. For $q\geq 0$, it is given by
\begin{equation}
\beta _{\varrho }\left (q\right )\coloneqq \limsup _{n\rightarrow
\infty }\beta _{n}^{\varrho }\left (q\right )\quad \text{with\quad \ }
\beta _{n}^{\varrho }\left (q\right )\coloneqq \frac{1}{\log 2^{n}}
\log \sum _{C\in \mathcal{D}_{n}}\varrho \left (C\right )^{q},
\label{eq:Lq-spectrum}
\end{equation}
where
$\mathcal{D}_{n}\coloneqq \left \{  A_{n}^{k}:k\in \mathbb{Z},\,
\varrho \left (A_{n}^{k}\right )>0\right \}  $ and
$A_{n}^{k}\coloneqq \left (\left (k-1\right )2^{-n},k2^{-n}\right ]$. Each
$\beta _{n}^{\varrho }$ defines a non-increasing, differentiable and convex
function with unique fixed point $q_{n}^{\varrho }\in (0,1)$, i.e.
$\beta _{n}^{\varrho }\left (q_{n}^{\varrho }\right )=q_{n}^{\varrho }$.
We have
$\beta _{n}^{\varrho }\left (1\right )=\beta _{\varrho }\left (1
\right )=0$, $n\in \mathbb{N}$, and
$\beta _{\varrho }\left (0\right )$ is equal to the
\emph{upper Minkowski dimension}
$\overline{\dim }_{M}\left (\supp \left (\varrho \right )\right )$ of the
support $\supp \left (\varrho \right )$ of $\varrho $. The following quantity
\begin{equation*}
q_{\varrho }\coloneqq \limsup _{n\rightarrow \infty }q_{n}^{\varrho }
\end{equation*}
has been introduced by the authors in \citep{KN2022} and plays a central
role for the spectral problem. In fact, by extending and combining ideas
from there and \citep{Barral2020,MR1838304}, we can prove that for weak
Gibbs measures $\varrho $, as defined in Section~\ref{sec:Conformal-iterated-function},
the spectral dimension $s_{\varrho |_{\left (0,1\right )}}$ always exists
and is equal to $q_{\varrho }$, which generalizes previous results for
linear IFS under the OSC in \citep{MR118891,MR1328700}. The restriction of
$\varrho $ to the open unit interval guarantees that there are no atoms
at the boundary points, which on the one hand allows the weak Dirichlet
approach, while on the other hand the $L^{q}$-spectrum on $[0,1]$ and the
value of $q_{\varrho }$ are not affected by this restriction.
\begin{thm} 
\label{thm:WeakGibbs}%
Let $\varrho $ be a weak Gibbs measure on $[0,1]$ with respect to a non-trivial
$\mathcal{C}^{1}$-IFS (with or without overlap). Then the spectral dimension
$s_{\varrho |_{\left (0,1\right )}}$ exists and equals
$q_{\varrho }$. If, additionally, the OSC is fulfilled, then
$q_{\varrho }$ coincides with the unique zero $z_{\varrho }$ of the pressure
function as defined in \reftext{(\ref{eq:Pressure})}.
\end{thm}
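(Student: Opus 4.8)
The proof splits into the two assertions. For the first, I would begin by establishing that, for a weak Gibbs measure $\varrho$ with respect to a non-trivial $\mathcal{C}^{1}$-IFS $\{\phi_{i}\}$, the $L^{q}$-spectrum on $[0,1]$ exists as a genuine limit, $\beta_{\varrho}(q)=\lim_{n}\beta_{n}^{\varrho}(q)$ for every $q\ge 0$ (since both $\beta_{\varrho}$ and $q_{\varrho}$ are unaffected by deleting the endpoints, I may work with $\varrho$ itself). The plan is to compare the dyadic partition functions $\sum_{C\in\mathcal{D}_{n}}\varrho(C)^{q}$ with the symbolic ones $\sum_{|\omega|=n}\varrho(\phi_{\omega}([0,1]))^{q}$: the weak Gibbs property gives the quasi-multiplicativity $\varrho(\phi_{\omega\omega'}([0,1]))=\varrho(\phi_{\omega}([0,1]))\,\varrho(\phi_{\omega'}([0,1]))\,\mathrm{e}^{o(|\omega|+|\omega'|)}$, while the $\mathcal{C}^{1}$-assumption yields the subexponential distortion $\sup_{s,t}\bigl|\log\bigl(\phi_{\omega}'(s)/\phi_{\omega}'(t)\bigr)\bigr|=o(|\omega|)$, hence $\diam\phi_{\omega}([0,1])=\|\phi_{\omega}'\|\,\mathrm{e}^{o(|\omega|)}$; together these make $n\mapsto\log\sum_{|\omega|=n}\varrho(\phi_{\omega}([0,1]))^{q}$ simultaneously sub- and super-additive modulo $o(n)$, so its normalised limit exists, and under the OSC this transfers directly to the dyadic spectrum. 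In the presence of overlaps, however, the dyadic and symbolic spectra need not agree, and it is precisely here that I would invoke the weak-separation results of \citep{MR1838304,MR2322179} together with the recent $L^{q}$-spectrum results of \citep{Barral2020} to conclude that $\beta_{n}^{\varrho}$ still converges pointwise.

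Granting this, the functions $\beta_{n}^{\varrho}$ are non-increasing, convex and uniformly bounded on $[0,1]$ with $\beta_{n}^{\varrho}(1)=0$, so $\beta_{\varrho}$ is non-increasing and convex; since convex functions converge locally uniformly on the interior, the unique fixed points $q_{n}^{\varrho}\in(0,1)$ of the strictly decreasing maps $q\mapsto\beta_{n}^{\varrho}(q)-q$ converge to the unique fixed point $q_{\varrho}$ of $q\mapsto\beta_{\varrho}(q)-q$, and in particular $q_{\varrho}=\limsup_{n}q_{n}^{\varrho}=\liminf_{n}q_{n}^{\varrho}$ and $\beta_{\varrho}(q_{\varrho})=q_{\varrho}$. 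It then remains to invoke the spectral-dimension estimates of \citep{KN2022}: for any finite Borel measure on $[0,1]$ with infinite support one has $\overline{s}_{\varrho|_{(0,1)}}\le\limsup_{n}q_{n}^{\varrho}$, and the matching lower bound $\underline{s}_{\varrho|_{(0,1)}}\ge\liminf_{n}q_{n}^{\varrho}$ is obtained, in the overlapping case, by extending the methods of \citep{KN2022} with the input of \citep{MR1838304,MR2322179,Barral2020}. Since $\limsup_{n}q_{n}^{\varrho}=\liminf_{n}q_{n}^{\varrho}=q_{\varrho}$, all of these quantities coincide and $s_{\varrho|_{(0,1)}}=q_{\varrho}$.

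For the second assertion I would fix the OSC open set $U$ and, for each $n$, consider the stopping family $\Lambda_{n}=\{\omega:\diam\phi_{\omega}([0,1])\le 2^{-n}<\diam\phi_{\omega^{-}}([0,1])\}$, where $\omega^{-}$ denotes $\omega$ with its last letter removed. The sets $\phi_{\omega}(U)$, $\omega\in\Lambda_{n}$, are pairwise disjoint, and by the subexponential distortion $2^{-n}\ge\diam\phi_{\omega}([0,1])\ge 2^{-n}\mathrm{e}^{-o(n)}$ for $\omega\in\Lambda_{n}$; hence the standard bounded-intersection argument under the OSC gives, for all $q\ge 0$,
\[
\sum_{C\in\mathcal{D}_{n}}\varrho(C)^{q}=\mathrm{e}^{o(n)}\sum_{\omega\in\Lambda_{n}}\varrho(\phi_{\omega}([0,1]))^{q},
\]
so that $\beta_{\varrho}(q)=\lim_{n}\tfrac{1}{n\log 2}\log\sum_{\omega\in\Lambda_{n}}\varrho(\phi_{\omega}([0,1]))^{q}$. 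On the other hand, the defining property of $z_{\varrho}$ as the unique zero of the pressure function translates, via quasi-multiplicativity and the usual subadditivity (Moran) argument, into $\sum_{\omega\in\Lambda_{n}}\bigl(\varrho(\phi_{\omega}([0,1]))\,\diam\phi_{\omega}([0,1])\bigr)^{z_{\varrho}}=\mathrm{e}^{o(n)}$ --- here the weak Gibbs property and the $\mathcal{C}^{1}$-distortion bound replace $\varrho(\phi_{\omega}([0,1]))$ and $\diam\phi_{\omega}([0,1])$ by the corresponding Gibbs weight and $\|\phi_{\omega}'\|$ up to subexponential factors, which do not move the zero of the pressure. Combining this with $\diam\phi_{\omega}([0,1])=2^{-n}\mathrm{e}^{o(n)}$ on $\Lambda_{n}$ yields $\sum_{\omega\in\Lambda_{n}}\varrho(\phi_{\omega}([0,1]))^{z_{\varrho}}=2^{nz_{\varrho}}\mathrm{e}^{o(n)}$, i.e.\ $\beta_{\varrho}(z_{\varrho})=z_{\varrho}$; as $\beta_{\varrho}$ has a unique fixed point, $q_{\varrho}=z_{\varrho}$.

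The main obstacle is the first step: establishing that the $L^{q}$-spectrum is a true limit --- not merely a $\limsup$ --- for weak Gibbs measures under only $\mathcal{C}^{1}$-regularity and in the presence of overlaps, where one has neither a genuine Gibbs (multiplicative) structure nor uniformly bounded distortion, and where the dyadic and symbolic $L^{q}$-spectra need not coincide. Reconciling all of this is exactly where the input of \citep{Barral2020,MR1838304,MR2322179} is indispensable, whereas the bounded-intersection comparison and the Moran reformulation of the pressure zero in the OSC case are routine.
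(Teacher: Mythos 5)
Your outline is correct and the first assertion ($s_{\varrho|_{(0,1)}}=q_{\varrho}$) follows essentially the same path as the paper: establish that the $L^{q}$-spectrum exists as a limit and then cite \citep[Theorems 1.1, 1.2]{KN2022}. Two small corrections there: the paper needs Barral--Feng \citep{Barral2020} only for the explicit self-similar formula (Theorem~\ref{thm:SpectralDimension}), not for the existence of the limit, which rests on a quasi-multiplicativity argument adapted from Peres--Solomyak and Feng \citep{MR1838304,MR2322179} (Lemmas~\ref{lem:WeakGibbsINequality}, \ref{lem:WeakGibbsEstimation}, Proposition~\ref{prop:LqSpectrum}); and the paper proves the limit only on $(0,1]$ (plus separately at $q=0$ in Proposition~\ref{prop:_ApplicationRiedi}), which suffices because the fixed point lies in $(0,1)$. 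For the second assertion, though, you take a genuinely different route. The paper never identifies $q_{\varrho}=z_{\varrho}$ by a direct $L^{q}$-computation: instead, under the OSC it proves $s_{\varrho}=z_{\varrho}$ directly via eigenvalue scaling (Lemma~\ref{lem:ScalingProperty}), eigenvalue estimates (Lemma~\ref{lem:i-th_Eigenvalue}), a two-sided Moran bound on the counting function (Corollary~\ref{cor:Approx}), and convergence of the approximate fixed-length pressure zeros (Lemma~\ref{lem:_ConvergenceZeros} and Lemma~\ref{lem:Approx-1}); then $q_{\varrho}=z_{\varrho}$ drops out by comparing with the overlap proof $s_{\varrho}=q_{\varrho}$. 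You instead argue on the measure side: show $\beta_{\varrho}(z_{\varrho})=z_{\varrho}$ by comparing dyadic and Moran partition functions under OSC, and conclude by uniqueness of the fixed point. Both are valid. Your route is purely measure-theoretic and avoids the eigenvalue machinery; the paper's route is purely spectral and gives an independent proof of the power law $N_{\varrho}(x)\asymp x^{z_{\varrho}}$ along the way. One caution about your version: your claim that $\sum_{\omega\in\Lambda_{n}}(\varrho(\phi_{\omega}([0,1]))\,\diam\phi_{\omega}([0,1]))^{z_{\varrho}}=\mathrm{e}^{o(n)}$ is ``routine'' undersells the difficulty of the lower bound. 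For H\"older $\xi$ one could anchor it to a Gibbs measure at $z_{\varrho}\xi$, but for the merely continuous $\xi=\psi+\varphi$ arising from a weak Gibbs $\psi$ and a $\mathcal{C}^{1}$-IFS, there is no Gibbs measure at $z_{\varrho}\xi$ to fall back on, and the stopping-family Moran lower bound requires a genuine subadditivity/quasi-multiplicativity argument. The paper's use of \emph{fixed-length} Moran systems $I^{m}$ (Lemma~\ref{lem:Approx-1}) is precisely how it sidesteps this delicacy.
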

 
\begin{cor}
\label{cor1.2}
Let $\varrho $ be a weak Gibbs measure with respect to a
$\mathcal{C}^{1}$-IFS (with or without overlap). If
$q_{\varrho }<1/2$, then $\varrho $ is singular with respect to
$\Lambda $.
\end{cor}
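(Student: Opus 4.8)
The plan is to argue by contraposition, deriving the statement from Theorem~\ref{thm:WeakGibbs} together with the classical Weyl-type asymptotics recorded in the introduction for Kre\u{\i}n--Feller operators whose generating measure has a non-trivial absolutely continuous part. So assume that $\varrho$ is \emph{not} singular with respect to $\Lambda$; the aim is to conclude $q_{\varrho}\ge 1/2$.

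First I would replace $\varrho$ by its restriction $\varrho|_{(0,1)}$ and take the Lebesgue decomposition $\varrho|_{(0,1)}=\eta+\sigma\Lambda|_{[0,1]}$. Since the absolutely continuous part of $\varrho$ charges no subset of $\{0,1\}$, it survives the restriction, so $\sigma$ is not $\Lambda$-a.e.\ equal to zero; moreover $\eta(\{0,1\})=0$ trivially. If $\int_{[0,1]}\sqrt{\sigma}\,\mathrm{d}\Lambda$ is finite, the asymptotics quoted in the introduction, namely $N_{\varrho|_{(0,1)}}(x)\sim\frac{1}{\pi}\bigl(\int_{[0,1]}\sqrt{\sigma}\,\mathrm{d}\Lambda\bigr)x^{1/2}$ with a strictly positive constant, already show that the spectral dimension of $\varrho|_{(0,1)}$ exists and equals $1/2$. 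The general case reduces to this by truncation: pick $\varepsilon>0$ and a Borel set $A\subset(0,1)$ with $\Lambda(A)>0$ on which $\varepsilon\le\sigma\le\varepsilon^{-1}$, so that $\varrho|_{(0,1)}\ge\sigma\mathbf{1}_{A}\Lambda$. By the standard monotonicity of the Dirichlet eigenvalues of $\Delta_{\varrho}$ in the underlying measure --- a larger measure only lowers the eigenvalues and hence raises the counting function --- one gets $N_{\varrho|_{(0,1)}}(x)\ge N_{\sigma\mathbf{1}_{A}\Lambda}(x)\sim\frac{1}{\pi}\bigl(\int_{A}\sqrt{\sigma}\,\mathrm{d}\Lambda\bigr)x^{1/2}$, whence $\underline{s}_{\varrho|_{(0,1)}}\ge 1/2$. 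Together with the general bound $\overline{s}_{\varrho|_{(0,1)}}\le 1/2$ this yields $s_{\varrho|_{(0,1)}}=1/2$.

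It then remains to invoke Theorem~\ref{thm:WeakGibbs}, which asserts that $s_{\varrho|_{(0,1)}}$ exists and equals $q_{\varrho}$; combined with the previous step this forces $q_{\varrho}=1/2$, contradicting the hypothesis $q_{\varrho}<1/2$. I do not expect a genuine obstacle in this argument; the only points that need a line of justification are the legitimacy of the eigenvalue comparison between the different weighted spaces $L^{2}(\varrho|_{(0,1)})$ and $L^{2}(\sigma\mathbf{1}_{A}\Lambda)$ (immediate from the min-max characterisation, applied with test functions from $H^{1}_{0}[0,1]\hookrightarrow C[0,1]$) and the remark that the truncation step is needed only to cover the possibility $\int_{[0,1]}\sqrt{\sigma}\,\mathrm{d}\Lambda=\infty$. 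Everything else is a direct appeal to the two facts cited above.
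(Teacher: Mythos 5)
Your contrapositive argument is correct and is essentially the natural way to deduce the corollary from Theorem~\ref{thm:WeakGibbs} together with the McKean--Ray/Birman--Solomjak asymptotics quoted in the introduction: if $\varrho$ were not singular, the Lebesgue decomposition $\varrho|_{(0,1)}=\eta+\sigma\Lambda$ with $\sigma$ not $\Lambda$-a.e.\ zero and $\eta(\{0,1\})=0$ gives $N_{\varrho|_{(0,1)}}(x)\sim\pi^{-1}\bigl(\int_{[0,1]}\sqrt{\sigma}\,\mathrm{d}\Lambda\bigr)\,x^{1/2}$ with a strictly positive constant, hence $s_{\varrho|_{(0,1)}}=1/2$; Theorem~\ref{thm:WeakGibbs} then forces $q_{\varrho}=1/2$, contradicting $q_{\varrho}<1/2$. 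The paper states the corollary without an explicit proof, and this is the argument it invites.

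One small remark: the entire truncation step is unnecessary. Since $\varrho$ is a finite measure, $\sigma\in L^{1}([0,1],\Lambda)$, and Cauchy--Schwarz gives
\begin{equation*}
\int_{[0,1]}\sqrt{\sigma}\,\mathrm{d}\Lambda
\le\Bigl(\int_{[0,1]}\sigma\,\mathrm{d}\Lambda\Bigr)^{1/2}\Lambda([0,1])^{1/2}<\infty ,
\end{equation*}
so the hypothesis $\int\sqrt{\sigma}\,\mathrm{d}\Lambda<\infty$ is automatic and you can apply the quoted asymptotics directly. The truncation does no harm, but the min--max eigenvalue comparison it relies on is a separate fact that you would need to justify carefully (the two quadratic forms live on different $L^{2}$-spaces with different form domains); since the step can be dropped entirely, it is cleaner to do so. With that simplification the proof is a two-line appeal to the two results you cite.
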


As a by-product, using ideas of Riedi \citep{MR1312056,Riedi_diss}, we
can show that the Minkowski dimension of the self-conformal set generated
by a $\mathcal{C}^{1}$-IFS with overlaps always exists (\reftext{Proposition~\ref{prop:_ApplicationRiedi}})---a
fact we could not find in the literature.

In the special case of dimensionally regular linear IFS (cf. \reftext{Definition~\ref{def:DimesionRegular}})
we can apply a recent result by Barral and Feng \citep{Barral2020} to compute
the spectral dimension more explicitly. We consider an IFS given by contracting
similarities
$\Phi =\left (T_{i}:[0,1]\rightarrow [0,1]:i=1,\dots ,n\right )$ where
$T_{i}(x)=r_{i}x+b_{i}$, $x\in \mathbb{R}$ with
$b_{i}\in \mathbb{R}$, $\left |r_{i}\right |<1$, $i=1,\dots ,n$. For a
given probability vector $(p_{1},\ldots ,p_{n})\in (0,1)^{n}$, we call
the unique Borel probability measure $\varrho $ satisfying
\begin{equation}
\varrho (A)=\sum _{i=1}^{n}p_{i}\cdot \varrho \circ T_{i}^{-1}(A),\:A
\in \mathfrak{B}([0,1])
\label{eq:Simi-1}
\end{equation}
the \emph{self-similar measure} of $\Phi $ with probability vector
$(p_{1},\ldots ,p_{n})$, where $\mathfrak{B}([0,1])$ denotes the Borel
$\sigma $-algebra of $[0,1]$. For every $q\in \mathbb{R}$ let
$\tau (q)$ be the unique solution of
\begin{equation}
\sum _{i=1}^{n}p_{i}^{q}\left |r_{i}\right |^{\tau (q)}=1,
\label{eq:Def_tau}
\end{equation}
which defines an analytic function $q\mapsto \tau (q)$. With the help of
$\tau $, the
\emph{similarity dimension of its attractor $\supp \varrho $ }is set to
be
\begin{equation*}
\dim _{S}(\supp \varrho )\coloneqq \tau (0)
\end{equation*}
and we define the
\emph{similarity dimension of the measure $\varrho $ }to be
\begin{equation*}
\dim _{S}(\varrho )\coloneqq -\tau '(1)=
\frac{\sum _{i=1}^{n}\log \left (p_{i}\right )p_{i}}{\sum _{i=1}^{n}\log \left (\left |r_{i}\right |\right )p_{i}}.
\end{equation*}
 
\begin{defn} 
\label{def:DimesionRegular}
An IFS $\Phi \coloneqq \left (\varphi _{i}\right )_{i=1,\ldots ,n}$ of
similarities on $\mathbb{R}$ with contraction rates
$\left (r_{1},\ldots ,r_{n}\right )$ is said to be
\emph{dimensionally regular,} if every self-similar measure
$\varrho $ of $\Phi $ with probability vector
$(p_{1},\ldots ,p_{n})\in \left (0,1\right )^{n}$ has Hausdorff dimension
\begin{equation*}
\dim _{H}(\varrho )=\min \left \{  1,\dim _{S}(\varrho )\right \}  .
\end{equation*}
\end{defn} 
\begin{rem}
\label{rem1.4}
From Hochman \citep[Theorem 1.1]{MR3224722}, it follows that if the similarities
$\left (T_{i}\right )_{i=1,\ldots ,n}$ satisfy the
\emph{exponential separation condition} (ESC) (see e.g.
\citep[Definition 2.2]{Barral2020}), then $\varrho $ is dimensionally regular.
\end{rem}
 
\begin{thm} 
\label{thm:SpectralDimension}%
Assume that the IFS
$\Phi \coloneqq \left (\varphi _{i}\right )_{i=1,\ldots n}$ of similarities
is dimensionally regular and let $\varrho $ be the self-similar measure
of $\Phi $ with probability vector
$\left (p_{1},\ldots ,p_{n}\right )\in \left (0,1\right )^{n}$. With
$\zeta $ uniquely determined by
$\tau \left (\zeta \right ) = \zeta $ and
\begin{equation*}
\widetilde{q}\coloneqq \inf \left (\left \{  q>0:-\tau '(q)q+\tau (q)
\leq 1\right \}  \cup \left \{  1\right \}  \right ),
\end{equation*}
the spectral dimension of $\Delta _{\varrho }$ is given by
\begin{equation*}
s_{\varrho }=
\begin{cases}
\zeta  & \text{if\,\,}\zeta \geq \widetilde{q},
\\
{\widetilde{q}}/({1-\tau \left (\widetilde{q}\right )+\widetilde{q}})
& \text{if\,\,}\zeta <\widetilde{q}.
\end{cases}
\end{equation*}
\end{thm}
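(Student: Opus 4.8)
The strategy is to reduce, via Theorem~\ref{thm:WeakGibbs}, to the computation of the fixed point of the $L^{q}$-spectrum $\beta_{\varrho}$, and then to use the explicit form of $\beta_{\varrho}$ available for dimensionally regular self-similar measures thanks to Barral and Feng.

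\emph{Step 1 (reduction to $q_{\varrho}$).} A self-similar measure $\varrho$ is a Gibbs measure for the linear, hence $\mathcal{C}^{1}$, IFS $\Phi$, and has no atoms at $\{0,1\}$ when $n\ge 2$ with weights in $(0,1)^{n}$ (if desired, replace $\varrho$ by $\varrho|_{(0,1)}$, which changes neither $\beta_{\varrho}$ nor $q_{\varrho}$). Hence Theorem~\ref{thm:WeakGibbs} gives $s_{\varrho}=q_{\varrho}=\limsup_{n}q_{n}^{\varrho}$. Granting (from Step~2) that $\beta_{n}^{\varrho}\to\beta_{\varrho}$ pointwise on $(0,\infty)$, local uniformity of this convergence of convex functions forces $q_{n}^{\varrho}\to q^{\ast}$, where $q^{\ast}$ is the unique point of $(0,1)$ with $\beta_{\varrho}(q^{\ast})=q^{\ast}$ (uniqueness as for the $q_{n}^{\varrho}$, since $\beta_{\varrho}$ is convex and non-increasing with $\beta_{\varrho}(0)=\overline{\dim}_{M}(\supp\varrho)\le 1$ and $\beta_{\varrho}(1)=0$). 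So it suffices to identify $q^{\ast}$ with the claimed expression.

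\emph{Step 2 (explicit $\beta_{\varrho}$).} Dimensional regularity of $\Phi$ (guaranteed e.g.\ by the ESC, Remark~\ref{rem1.4}) places $\varrho$ in the scope of the result of Barral and Feng \citep{Barral2020}, which, combined with the classical structure theory of $L^{q}$-spectra of self-similar measures, shows that $\beta_{\varrho}(q)=\lim_{n}\beta_{n}^{\varrho}(q)$ exists for all $q\ge0$ and, on $[0,1]$, coincides piecewise with $\tau$: let $g(q):=\tau(q)-q\tau'(q)$, which is non-increasing on $(0,\infty)$ because $g'(q)=-q\tau''(q)\le 0$ by convexity of $\tau$, with $g(0)=\tau(0)=\dim_{S}(\supp\varrho)$ and $g(1)=-\tau'(1)=\dim_{S}(\varrho)$. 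Then: (i) if $\tau(0)\le 1$, then $\widetilde q=0$ and $\beta_{\varrho}=\tau$ on $[0,1]$; (ii) if $\tau(0)>1\ge\dim_{S}(\varrho)$, then $\widetilde q$ is the point with $g(\widetilde q)=1$, $\beta_{\varrho}=\tau$ on $[\widetilde q,1]$, and on $[0,\widetilde q]$ the function $\beta_{\varrho}$ is the line through $(\widetilde q,\tau(\widetilde q))$ tangent to $\tau$ there, whose value at $0$ is $g(\widetilde q)=1=\overline{\dim}_{M}(\supp\varrho)$; (iii) if $\dim_{S}(\varrho)>1$, then $\widetilde q=1$, $\dim_{H}(\varrho)=1$, and $\beta_{\varrho}(q)=1-q$ on $[0,1]$. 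Also $\zeta\in(0,1)$: $\tau$ is convex with $\tau(0)\ge 0$ and $\tau(1)=0$.

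\emph{Step 3 (locating $q^{\ast}$).} If $\zeta\ge\widetilde q$, then $\zeta\in[\widetilde q,1]$, where $\beta_{\varrho}=\tau$, so $\beta_{\varrho}(\zeta)=\tau(\zeta)=\zeta$ and thus $q^{\ast}=\zeta$, i.e.\ $s_{\varrho}=\zeta$. If $\zeta<\widetilde q$, then for $q\in[\widetilde q,1]$ we have $\beta_{\varrho}(q)=\tau(q)\le\tau(\widetilde q)\le\tau(\zeta)=\zeta<\widetilde q\le q$, so $\beta_{\varrho}$ has no fixed point there and $q^{\ast}$ lies in $[0,\widetilde q)$ on the affine branch $q\mapsto\tau(\widetilde q)+\tau'(\widetilde q)(q-\widetilde q)$; solving $\beta_{\varrho}(q)=q$ gives $q^{\ast}=\bigl(\tau(\widetilde q)-\widetilde q\,\tau'(\widetilde q)\bigr)/\bigl(1-\tau'(\widetilde q)\bigr)=1/\bigl(1-\tau'(\widetilde q)\bigr)$ after using $g(\widetilde q)=1$, and substituting $\tau'(\widetilde q)=(\tau(\widetilde q)-1)/\widetilde q$ yields $q^{\ast}=\widetilde q/(1-\tau(\widetilde q)+\widetilde q)$; in the subcase $\widetilde q=1$ the branch is $q\mapsto 1-q$, so $q^{\ast}=1/2=\widetilde q/(1-\tau(\widetilde q)+\widetilde q)$ since $\tau(1)=0$. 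Hence $s_{\varrho}=\widetilde q/(1-\tau(\widetilde q)+\widetilde q)$ in this case. (As a sanity check: the two formulas agree when $\zeta=\widetilde q$, both giving $\widetilde q$; and $\overline{s}_{\varrho}\le 1/2$ is respected, since convexity of $\beta_{\varrho}$ with $\beta_{\varrho}(0)\le 1$ and $\beta_{\varrho}(1)=0$ gives $\tau(\widetilde q)=\beta_{\varrho}(\widetilde q)\le 1-\widetilde q$.)

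\emph{Main obstacle.} The only substantial point is Step~2 — namely extracting from \citep{Barral2020}, under the mere hypothesis of dimensional regularity, the full piecewise-explicit description of $\beta_{\varrho}$ (not just $\dim_{H}\varrho$), and identifying its break-point with the quantity $\widetilde q$ from the statement; this includes carefully treating the degenerate possibilities $\widetilde q\in\{0,1\}$ and, in particular, the subcase $\dim_{S}(\varrho)>1$, where the truncated branch is forced to be $1-q$ and $s_{\varrho}$ drops to $1/2$. Steps~1 and~3 are then elementary convex-analysis bookkeeping.
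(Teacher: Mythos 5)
Your proposal is correct and follows essentially the same route as the paper: invoke the Barral--Feng result to obtain the explicit piecewise form of $\beta_{\varrho}$ (affine from $(0,1)$ to $(\widetilde q,\tau(\widetilde q))$, then $\tau$ on $[\widetilde q,1]$), then reduce the spectral dimension to the unique fixed point of $\beta_{\varrho}$ in $(0,1)$ via the results of \citep{KN2022} and identify that fixed point by the case distinction on whether $\zeta$ lies on the $\tau$-branch or the affine branch. The paper condenses Steps 1 and 3 into a single sentence citing \citep[Theorems 1.1, 1.2]{KN2022} together with the displayed formula for $\beta_{\varrho}$, whereas you spell out the convex-analysis bookkeeping (including the degenerate subcases $\widetilde q\in\{0,1\}$ and the equivalence of the tangent-line and secant descriptions of the affine branch when $g(\widetilde q)=1$); this extra detail is consistent with, and clarifies, the paper's terse argument.
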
 
\begin{rem}
\label{rem1.6}
We remark in the case $\dim _{S}(\varrho )\geq 1$ we have
$s_{\varrho }=1/2$, and if $\dim _{S}(\varrho )<1$ and
$\dim _{S}(\supp \varrho )=\tau \left (0\right )\leq 1$ then
$s_{\varrho }=\zeta $. Only in the remaining case $s_{\varrho }$ depends
on $\widetilde{q}$; such a case is illustrated in \reftext{Fig.~\ref{fig:Lq_overlap}} on page \pageref{fig:Lq_overlap}.
\end{rem}

Finally, for the more restricted class of $\psi $-Gibbs measures with H\"{o}lder
continuous potential $\psi $ and with respect to a
$\mathcal{C}^{1+\gamma }$-IFS under the OSC, which includes self-similar measures
under the OSC as a special case, we can show that the eigenvalue counting function
obeys a power law with exponent $z_{\varrho }$. For this we use of the
following notation: For
$f,g:\mathbb{R}_{\geq 0}\rightarrow \mathbb{R}_{\geq 0}$ we write
$f\ll g$ if there exists a positive constant $c$ such that
$f(x)\leq cg(x)$ for all $x$ large and we write $f\asymp g$, if
$f\ll g$ and $g\ll f$.
%
\begin{thm}[Spectral asymptotics] 
\label{thm:SpectralAsymptotic}%
If $\varrho $ is a $\psi$-Gibbs measure for some H\"{o}lder continuous potential
$\psi $ and with respect to a $\mathcal{C}^{1+\gamma }$-IFS satisfying the
OSC, then
\begin{equation*}
N_{\varrho }(x)\asymp x^{z_{\varrho }},
\end{equation*}
where $z_{\varrho }$ is the unique zero of the pressure function as defined
in \reftext{(\ref{eq:Pressure})}.
\end{thm}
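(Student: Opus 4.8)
The aim is to refine the identity $s_{\varrho}=z_{\varrho}$ provided by Theorem~\ref{thm:WeakGibbs} to the sharp two-sided estimate $N_{\varrho}(x)\asymp x^{z_{\varrho}}$; the plan is to combine Dirichlet--Neumann bracketing along the cylinder sets of the IFS with a stopping-time decomposition, and to read off the exponent and the constants from the thermodynamic formalism of the --- now genuinely H\"older --- potential attached to $\varrho$. For a finite word $\omega$ put $\varphi_{\omega}=\varphi_{\omega_{1}}\circ\dots\circ\varphi_{\omega_{|\omega|}}$ and $J_{\omega}\coloneqq\varphi_{\omega}([0,1])$. Because the IFS is $\mathcal{C}^{1+\gamma}$ the bounded distortion property holds, so $\varphi_{\omega}'(t)\asymp|J_{\omega}|$ uniformly in $t$ and $\omega$; because $\varrho$ is $\psi$-Gibbs with H\"older $\psi$ one has the quasi-self-similarity $\varrho(\varphi_{\omega}(B))\asymp\varrho(J_{\omega})\,\varrho(B)$, uniformly in $\omega$ and over sets $B$ that are finite unions of cylinders.

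First I would record two one-interval estimates for the Kre\u{\i}n--Feller operator on a subinterval $J$. One is the unconditional bound $N^{\mathrm{Neu}}_{\varrho,J}(y)\le 1+c\sqrt{y\,|J|\,\varrho(J)}$, the same estimate that underlies $\overline{s}_{\varrho}\le1/2$. The other is the matching \emph{uniform} upper bound $\lambda_{1}^{\mathrm{Dir}}(\Delta_{\varrho,J_{\omega}})\le C\,(|J_{\omega}|\,\varrho(J_{\omega}))^{-1}$ with $C$ independent of $\omega$: here one tests with the function equal to $1$ on $\varphi_{\omega}([\epsilon,1-\epsilon])$ and affine down to $0$ at the endpoints of $J_{\omega}$, whose Dirichlet energy is $\asymp|J_{\omega}|^{-1}$ by bounded distortion and whose squared $L^{2}(\varrho)$-norm is $\gtrsim\varrho(\varphi_{\omega}([\epsilon,1-\epsilon]))\gtrsim\varrho(J_{\omega})$ by quasi-self-similarity, once $\epsilon>0$ is fixed so small that $\varrho([\epsilon,1-\epsilon])\ge\tfrac12$ --- possible since a $\psi$-Gibbs measure for a non-trivial IFS is non-atomic. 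Now let $\mathcal{F}(x)$ be the finite antichain of words that are first along their branch with $|J_{\omega}|\varrho(J_{\omega})\le C/x$; as $|J_{\omega}|\varrho(J_{\omega})$ decreases at a uniform geometric rate along every branch, $\mathcal{F}(x)$ is a maximal antichain and $|J_{\omega}|\varrho(J_{\omega})\asymp 1/x$ on it. Under the OSC the interiors of $(J_{\omega})_{\omega\in\mathcal{F}(x)}$ are pairwise disjoint and the complementary gaps carry no $\varrho$-mass, so the min--max principle yields
\[
\sum_{\nu}N^{\mathrm{Dir}}_{\varrho,J_{\nu}}(x)\;\le\;N_{\varrho}(x)\;\le\;\sum_{\omega\in\mathcal{F}(x)}N^{\mathrm{Neu}}_{\varrho,J_{\omega}}(x),
\]
the left-hand sum running over the parents $\nu$ of the words in $\mathcal{F}(x)$ (an antichain on which $|J_{\nu}|\varrho(J_{\nu})>C/x$). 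By the two estimates each right-hand summand is $\ll 1$ and each left-hand summand is $\ge 1$, whence $N_{\varrho}(x)\asymp\#\mathcal{F}(x)$.

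It remains to show $\#\mathcal{F}(x)\asymp x^{z_{\varrho}}$. Let $\Phi$ be the potential on the coding space with $\exp(S_{|\omega|}\Phi(\omega))\asymp|J_{\omega}|\,\varrho(J_{\omega})$: it is H\"older because $\psi$ is H\"older and $\log|\varphi_{i}'|$ is $\gamma$-H\"older, and $\sup\Phi<0$. By construction the pressure function of \reftext{(\ref{eq:Pressure})} is $q\mapsto P_{\mathrm{top}}(q\Phi)$, strictly decreasing with unique zero $z_{\varrho}$. Let $\nu$ be the $(z_{\varrho}\Phi)$-conformal measure; since $P_{\mathrm{top}}(z_{\varrho}\Phi)=0$ its Gibbs property gives $\nu(J_{\omega})\asymp\exp(z_{\varrho}S_{|\omega|}\Phi(\omega))\asymp(|J_{\omega}|\varrho(J_{\omega}))^{z_{\varrho}}\asymp x^{-z_{\varrho}}$ for $\omega\in\mathcal{F}(x)$, while $\sum_{\omega\in\mathcal{F}(x)}\nu(J_{\omega})$ equals the fixed value $\nu(\supp(\varrho))$ because $\mathcal{F}(x)$ is a maximal antichain (the OSC overlaps being $\nu$-null). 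Hence $\#\mathcal{F}(x)\asymp x^{z_{\varrho}}$, and therefore $N_{\varrho}(x)\asymp x^{z_{\varrho}}$. Equivalently, both this count and the asymptotics of $N_{\varrho}$ can be obtained from the dynamical renewal theorem of \citep{MR3881115,MR3600649} applied to the transfer operator $\mathcal{L}_{z_{\varrho}\Phi}$, whose leading eigenvalue equals $e^{P_{\mathrm{top}}(z_{\varrho}\Phi)}=1$.

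The step I expect to be most delicate is the \emph{uniformity in the generation} $|\omega|$ of the two building blocks in the bracketing argument: the distortion estimate $\varphi_{\omega}'\asymp|J_{\omega}|$ and, above all, the quasi-self-similarity $\varrho(\varphi_{\omega}(B))\asymp\varrho(J_{\omega})\varrho(B)$. The former is exactly where the hypothesis $\mathcal{C}^{1+\gamma}$ rather than merely $\mathcal{C}^{1}$ is used (through a summable H\"older distortion series), and the latter holds with generation-independent constants precisely because $\varrho$ is a \emph{genuine} Gibbs measure with H\"older potential; for a merely weak-Gibbs measure one only gets subexponential corrections, which is why Theorem~\ref{thm:WeakGibbs} gives only the exponent while the sharp $\asymp$ requires the stronger hypotheses of the present theorem. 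Once these uniform estimates are in place, the bracketing bookkeeping, the treatment of the boundary of $[0,1]$ (harmless, as $\varrho$ is non-atomic so $N_{\varrho}=N_{\varrho|_{(0,1)}}$), and the thermodynamic count of $\mathcal{F}(x)$ are routine.
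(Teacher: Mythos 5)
Your strategy---bracket $N_{\varrho}$ along the cylinder decomposition of a stopping-time antichain, show each cylinder contributes $O(1)$ (resp.\ at least $1$) eigenvalues below level $x$, and then count the antichain---has the same skeleton as the paper's proof (Lemma~\ref{lem:i-th_Eigenvalue-1}, Lemma~\ref{lem:Partition}, Lemma~\ref{lem:decompositionDichrichlet}, Theorem~\ref{thm:CompareCountingfunctions}). Where you genuinely diverge is in the count $\#\mathcal{F}(x)\asymp x^{z_{\varrho}}$: you read it off from the Gibbs measure of the potential $z_{\varrho}\Phi$ at the pressure zero, using that its cylinder masses on a maximal antichain are uniformly comparable to $x^{-z_{\varrho}}$ and sum to a constant. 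The paper instead counts $\Gamma_{t}^{R}$, $\Gamma_{t,m}^{L}$ via Kombrink's dynamical renewal theorem \citep{MR3881115,MR3600649}, which forces a passage to the accelerated shift and a careful averaging over the period in the lattice case; your Gibbs-measure count sidesteps all of that and, for the two-sided bound $\asymp$ (as opposed to sharp asymptotics), is cleaner. Both routes exploit in the same way the strong bounded distortion property of Lemma~\ref{lem:_strong_Bounded-distortion_property}, which is exactly what $\mathcal{C}^{1+\gamma}$-smoothness plus H\"older $\psi$ buys you and what turns the subexponential weak-Gibbs corrections into uniform constants, so your closing commentary on where the hypotheses enter is accurate.

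One technical slip in your lower bound needs repair. You sum the Dirichlet counting functions over the set of parents $\{\omega^{-}:\omega\in\mathcal{F}(x)\}$ and assert this is an antichain; it need not be. For instance, if $\mathcal{F}(x)=\{11,12,2\}$ the parents are $\{1,\varnothing\}$, whose cylinders are nested, so the corresponding intervals overlap and the min--max bracketing no longer applies. The correct move---which is in effect what the paper does with $\Gamma_{t,m}^{L}$---is to run the lower bound over the stopping-time antichain itself, with a threshold $C_{1}/x$ chosen so that both inequalities hold simultaneously: on $\mathcal{F}(x)$ one has $c_{0}C_{1}/x<|J_{\omega}|\varrho(J_{\omega})\le C_{1}/x$, where $c_{0}>0$ is the uniform one-step decay of $|J_{\omega}|\varrho(J_{\omega})$ guaranteed by bounded distortion and the Gibbs property; taking $C_{1}\ge C/c_{0}$ makes your test-function estimate $\lambda_{1}^{\mathrm{Dir}}(\Delta_{\varrho,J_{\omega}})\le C\bigl(|J_{\omega}|\varrho(J_{\omega})\bigr)^{-1}\le x$ hold on all of $\mathcal{F}(x)$, and the Neumann bound $N_{\varrho,J_{\omega}}^{\mathrm{Neu}}(x)\ll 1$ is unaffected. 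With that correction your argument closes.
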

 
\section{Dirichlet forms for generalized Kre\u{\i}n--Feller
operators} 
\label{sec:Dirichlet-forms-for} 
\begin{figure}[h]
\center{\begin{tikzpicture}[scale=0.98, every node/.style={transform shape},line cap=round,line join=round,>=triangle 45,x=1cm,y=1cm] \begin{axis}[ x=3.7cm,y=2.5cm, axis lines=middle, axis line style={very thick},ymajorgrids=false, xmajorgrids=false, grid style={thick,densely dotted,black!20}, xlabel= {$q$}, ylabel= {$\beta_\rho (q)$}, xmin=-0.2 , xmax=1.5 , ymin=-0.3, ymax=2.2,x tick style={color=black}, xtick={0,0.428, .4987, 1},xticklabels = {0,$s_\rho\;\,$,$\;\tilde{q}$,1},  ytick={0,1, 2},yticklabels = {0,1,2}] \clip(-0.5,-0.3) rectangle (4,4); 
\draw[line width=.8pt,smooth,samples=180,domain=0.4987:3.4] plot(\x,{log10(0.001^((\x))+0.001^((\x))+0.05^((\x))+0.948^((\x)))/log10(2)}); 
\draw[line width=1pt,smooth,dotted, samples=180,domain=-0.3:0.4987] plot(\x,{log10(0.001^((\x))+0.001^((\x))+0.05^((\x))+0.948^((\x)))/log10(2)}); 
\draw [line width=0.8pt,solid, domain=-0.00 :0.4987] plot(\x,{(1-1.33214*\x)});
\draw [line width=0.7pt,dashed,gray, domain=-0.15 :1.3] plot(\x,{\x});
\node[circle,draw] (c) at (2.48 ,0 ){\,};
 \draw [line width=.7pt,dotted, gray] (0.4987 ,0.)--(0.4987,0.338); 
\draw [line width=.7pt,dotted, gray] (0.428 ,0.)--(0.428,0.43); 
\end{axis} 
\end{tikzpicture}}
\caption{The graph of $\beta _{\varrho }$ (solid line) for a dimensionally
regular IFS with four contraction ratios equal to $1/2$ and an associated
self-similar measure $\varrho $ with probability vector $\left
(0.001,0.001,0.05,0.948\right )$. The graph of $\beta _{\varrho }$ coincides on $\left
[\widetilde{q},1\right ]$ with $\tau $ (dotted line) as defined in \reftext{(\ref{eq:Def_tau})} and we have $\tau \left (0\right )=2$. The linear part
of $\beta _{\varrho }$ is determined by the tangent to the graph of $\tau $ over
the positive $x$-axis through the point $\left (0,1\right )$. The intersection
with the dashed line with slope $1$ gives the value for the spectral dimension
$s_{\varrho }$.}\label{fig:Lq_overlap}
\end{figure}
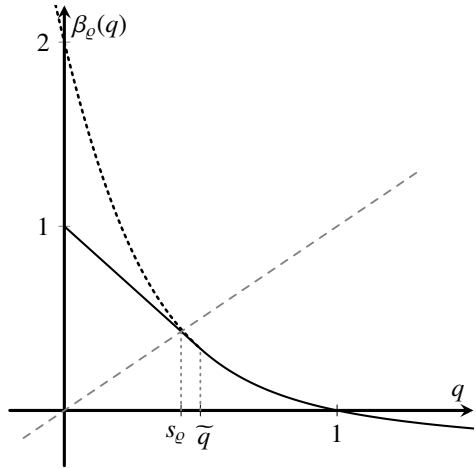

In this section we will define the classical and generalized Kre\u{\i}n--Feller
operator. The spectral properties for the generalized case were studied
in \citep{kuechler1986}, \citep{Volkmer05} and \citep{MR2110540}. The connection
between the generalized and the classical Kre\u{\i}n--Feller operator has
been elaborated in \citep{KSW2019}. In there, it has been shown that the
spectral behavior can be reduced to the classical Kre\u{\i}n--Feller operator
by a straightforward transformation of measure spaces. In the context of
this paper the generalized Kre\u{\i}n--Feller operator will be an important
tool in the proofs of our main results (see e.g. \reftext{Lemma~\ref{lem:ScalingProperty}}). Therefore, in this section we present a short
proof of this fact, which only refers to the Dirichlet form approach.

Our framework closely follows \citep{MR2563669,MR2892328,KN2022}. Throughout
this section, let $\mu $ and $\varrho $ be finite Borel measures on
$\left [a,b\right ]$ such that $\supp (\mu )=[a,b]$, $\mu $ is atomless
and $\varrho \left (\left \{  a,b\right \}  \right )=0$. Let
$L_{\varrho }^{2}=L_{\varrho }^{2}\left (\left [a,b\right ]\right )$ denote
the Hilbert space of square integrable functions with respect to
$\varrho $, $F_{\mu }$ the (strictly increasing and continuous)
\emph{distribution function} of $\mu $ and set
\begin{align*}
H_{\mu }^{1}([a,b]) & \coloneqq \left \{  f:[a,b]\rightarrow \mathbb{R}:
\exists \nabla _{\mu }f\in L_{\mu }^{2}:f(x)=f(a)+\int _{[a,x]}\nabla _{
\mu }f\;\mathrm{d}\mu ,x\in [a,b]\right \}  ,
\\
H_{0,\mu }^{1}([a,b]) & \coloneqq \left \{  f\in H_{\mu }^{1}([a,b]):f(a)=f(b)=0
\right \}
\end{align*}
as well as
\begin{equation*}
\mathcal{C}_{\varrho ,\mu }([a,b])\coloneqq \left \{  f\in \mathcal{C}([a,b]):f\
\text{is aff. lin. in}\ F_{\mu }\ \text{on each comp. of}\ [a,b]
\setminus \supp (\varrho )\right \}  .
\end{equation*}
We say that $f$ is affine linear in $F_{\mu }$ on the interval $I$ if, restricted
to $I$, it can be written as $x\mapsto a+bF_{\mu }$$\left (x\right )$ for
some $a,b\in \mathbb{R}$. Note that $\nabla _{\mu }f$ is unique as an element
of $L_{\mu }^{2}$ (see \citep[Proposition 2.1.3]{Arzt_diss} for a detailed
proof). In the case of the Lebesgue measure $\mu =\Lambda $ we write
$H_{0}^{1}([a,b])\coloneqq H_{0,\Lambda }^{1}([a,b])$ and
$\mathcal{C}_{\varrho }([a,b])\coloneqq \mathcal{C}_{\varrho ,\Lambda }([a,b])$.

Restricted to the (Dirichlet) domain
$\dom \left (\mathcal{E}_{\varrho ,\mu }\right )\times \dom \left (
\mathcal{E}_{\varrho ,\mu }\right )$ with
$\dom (\mathcal{E}_{\varrho ,\mu })\coloneqq H_{0,\mu }^{1}(\left [a,b
\right ])\cap \mathcal{C}_{\varrho ,\mu }([a,b])$ we define the form
\begin{equation*}
\mathcal{E}_{\varrho ,\mu }(f,g)\coloneqq \mathcal{E}_{\varrho ,\mu ,[a,b]}(f,g)
\coloneqq \int _{(a,b)}\nabla _{\mu }f\nabla _{\mu }g\,\;\mathrm{d}\mu ,
\ f,g\in \dom \left (\mathcal{E}_{\varrho }\right ).
\end{equation*}
Again, for the Lebesgue case, we write
$\mathcal{E}_{\varrho }(f,g)\coloneqq \mathcal{E}_{\varrho ,[a,b]}(f,g)
\coloneqq \mathcal{E}_{\varrho ,\Lambda ,[a,b]}(f,g)$. We define the linear
map
\begin{equation*}
\iota _{\mu }:\mathbb{R}^{\left [F_{\mu }(a),F_{\mu }(b)\right ]}\to
\mathbb{R}^{\left [a,b\right ]},\;f\mapsto f\circ F_{\mu },
\end{equation*}
{which} is injective as a consequence of
$F_{\mu }\circ F_{\mu }^{-1}=\idvtex _{\left [F_{\mu }(a),F_{\mu }(b)
\right ]}$. The inverse on its image
\begin{equation*}
\iota _{\mu }^{-1}:\iota _{\mu }\left (\mathbb{R}^{\left [F_{\mu }(a),F_{
\mu }(b)\right ]}\right )\to \mathbb{R}^{\left [F_{\mu }(a),F_{\mu }(b)
\right ]},\;f\mapsto f\circ F_{\mu }^{-1}
\end{equation*}
is therefore bijective and linear.
\begin{lem} 
\label{lem:firstDiffBijection}%
For linear subspaces
$A_{i}\subset \mathbb{R}^{\left [F_{\mu }(a),F_{\mu }(b)\right ]}$,
$i=1,2,3$, with inner product $\left (\cdot ,\cdot \right )_{A_{i}}$ we
consider the restrictions
\begin{equation*}
\iota _{\mu }^{-1}:\iota _{\mu }A_{i}\rightarrow A_{i}
\end{equation*}
and equip $\iota _{\mu }A_{i}$ with the pull-back inner product
$\left (f,g\right )_{\iota _{\mu }A_{i}}\coloneqq \left (\iota _{\mu }^{-1}f,
\iota _{\mu }^{-1}g\right )_{A_{i}}$. This gives rise to the following list

\begin{center}
\begin{tabular}{l|l|l|l|l}
$i$ & $A_{i}$ & $\left (\cdot ,\cdot \right )_{A_{i}}$ & $\iota _{\mu }A_{i}$ & $\left (\cdot ,\cdot \right )_{\iota _{\mu }A_{i}}$ \tabularnewline
\hline
$1$ & $L_{\varrho \circ F_{\mu }^{-1}}^{2}\left (\left [F_{\mu }(a),F_{\mu }(b)\right ]\right )$ & $\left \langle \cdot ,\cdot \right \rangle _{\varrho \circ F_{\mu }^{-1}}$ & $L_{\varrho }^{2}\left (\left [a,b\right ]\right )$ & $\left \langle \cdot ,\cdot \right \rangle _{\varrho }$ \tabularnewline
$2$ & $H_{0}^{1}\left (\left [F_{\mu }(a),F_{\mu }(b)\right ]\right )$ & $\mathcal{E}_{\varrho \circ F_{\mu }^{-1},\left [F_{\mu }(a),F_{\mu }(b)\right ]}$ & $H_{0,\mu }^{1}\left (\left [a,b\right ]\right )$ & $\mathcal{E}_{\varrho ,\mu ,[a,b]}$ \tabularnewline
$3$ & $\dom \left (\mathcal{E}_{\varrho \circ F_{\mu }^{-1},\left [F_{\mu }(a),F_{\mu }(b)\right ]}\right )$ & $\mathcal{E}_{\varrho \circ F_{\mu }^{-1},\left [F_{\mu }(a),F_{\mu }(b)\right ]}$ & $\dom \left (\mathcal{E}_{\varrho ,\mu ,[a,b]}\right )$ & $\mathcal{E}_{\varrho ,\mu ,[a,b]}$ \tabularnewline
\end{tabular}
\end{center}%
and in all cases $\iota _{\mu }^{-1}$ defines an isometric isomorphism of
Hilbert spaces. Moreover, for
$f\in H_{0}^{1}\left (\left [F_{\mu }(a),F_{\mu }(b)\right ]\right )$,
\[
\nabla _{\mu }\left (\iota _{\mu }f\right )=\iota _{\mu }\left (\nabla _{
\Lambda |_{\left [F_{\mu }(a),F_{\mu }(b)\right ]}}f\right )
\]
and for $f\in H_{0,\mu }^{1}\left (\left [a,b\right ]\right )$,
\begin{equation*}
\nabla _{\Lambda |_{\left [F_{\mu }(a),F_{\mu }(b)\right ]}}\left (
\iota _{\mu }^{-1}f\right )=\iota _{\mu }^{-1}\left (\nabla _{\mu }f
\right ).
\end{equation*}
\end{lem}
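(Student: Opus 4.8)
The plan is to reduce the whole statement to a single change of variables along the distribution function $F_\mu$, which conjugates $\mu$ to Lebesgue measure. Since $\supp(\mu)=[a,b]$ forces $F_\mu$ to be strictly increasing and $\mu$ being atomless forces it to be continuous, $F_\mu\colon[a,b]\to[F_\mu(a),F_\mu(b)]$ is an increasing homeomorphism; in particular $F_\mu\bigl((a,b)\bigr)=(F_\mu(a),F_\mu(b))$, $F_\mu\bigl([a,x]\bigr)=[F_\mu(a),F_\mu(x)]$ and $\supp\bigl(\varrho\circ F_\mu^{-1}\bigr)=F_\mu\bigl(\supp\varrho\bigr)$. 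Moreover the image measure of $\mu$ under $F_\mu$ is precisely $\Lambda|_{[F_\mu(a),F_\mu(b)]}$, because $\mu\circ F_\mu^{-1}\bigl([c,d]\bigr)=F_\mu\bigl(F_\mu^{-1}(d)\bigr)-F_\mu\bigl(F_\mu^{-1}(c)\bigr)=d-c$. I would first record the image-measure formula
\[
\int\varphi\,\mathrm{d}\bigl(\nu\circ F_\mu^{-1}\bigr)=\int(\varphi\circ F_\mu)\,\mathrm{d}\nu\qquad(\nu\in\{\mu,\varrho\},\ \varphi\ge 0\ \text{Borel}),
\]
together with the remark that $\varphi_1=\varphi_2$ holds $\nu\circ F_\mu^{-1}$-a.e.\ if and only if $\varphi_1\circ F_\mu=\varphi_2\circ F_\mu$ holds $\nu$-a.e., so that $\iota_\mu$ descends to $L^2$-classes and respects the $L^2$-uniqueness of the $\nabla$-operators. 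Choosing $\nu=\varrho$ and $\varphi=g_1g_2$ gives $\langle\iota_\mu g_1,\iota_\mu g_2\rangle_{\varrho}=\langle g_1,g_2\rangle_{\varrho\circ F_\mu^{-1}}$, and $\iota_\mu\bigl(h\circ F_\mu^{-1}\bigr)=h$ for $h\in L_\varrho^2$ gives surjectivity; hence $\iota_\mu L_{\varrho\circ F_\mu^{-1}}^2=L_\varrho^2$, the pull-back inner product equals $\langle\cdot,\cdot\rangle_\varrho$, and, the spaces being complete, $\iota_\mu^{-1}$ is an isomorphism of Hilbert spaces. This disposes of row~$1$.

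For rows~$2$ and $3$ the substance is the two gradient identities; abbreviate $\nabla_\Lambda\coloneqq\nabla_{\Lambda|_{[F_\mu(a),F_\mu(b)]}}$. If $f\in H_0^1\bigl([F_\mu(a),F_\mu(b)]\bigr)$, so that $f(y)=\int_{[F_\mu(a),y]}\nabla_\Lambda f\,\mathrm{d}\Lambda$ with $\nabla_\Lambda f\in L_\Lambda^2$ and $f$ vanishing at the endpoints, then the fundamental theorem of calculus for absolutely continuous functions together with the image-measure formula gives, for all $x\in[a,b]$,
\[
(\iota_\mu f)(x)=f\bigl(F_\mu(x)\bigr)=\int_{[F_\mu(a),F_\mu(x)]}\nabla_\Lambda f\,\mathrm{d}\Lambda=\int_{[a,x]}\bigl((\nabla_\Lambda f)\circ F_\mu\bigr)\,\mathrm{d}\mu,
\]
while $\bigl\|(\nabla_\Lambda f)\circ F_\mu\bigr\|_{L_\mu^2}=\bigl\|\nabla_\Lambda f\bigr\|_{L_\Lambda^2}<\infty$ and $(\iota_\mu f)(a)=(\iota_\mu f)(b)=0$; hence $\iota_\mu f\in H_{0,\mu}^1([a,b])$ and, by uniqueness of $\nabla_\mu$ in $L_\mu^2$, $\nabla_\mu(\iota_\mu f)=\iota_\mu(\nabla_\Lambda f)$. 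Running the same computation with $F_\mu^{-1}$ in place of $F_\mu$ shows, for $f\in H_{0,\mu}^1([a,b])$, that $\iota_\mu^{-1}f\in H_0^1\bigl([F_\mu(a),F_\mu(b)]\bigr)$ with $\nabla_\Lambda(\iota_\mu^{-1}f)=\iota_\mu^{-1}(\nabla_\mu f)$; these are the two displayed formulas of the lemma, and together they give that $\iota_\mu$ is a bijection $H_0^1\leftrightarrow H_{0,\mu}^1$. Substituting the gradient formula into the definitions of the forms and applying the image-measure formula once more,
\[
\mathcal{E}_{\varrho,\mu,[a,b]}(\iota_\mu f,\iota_\mu g)=\int_{(a,b)}\bigl((\nabla_\Lambda f\,\nabla_\Lambda g)\circ F_\mu\bigr)\,\mathrm{d}\mu=\int_{(F_\mu(a),F_\mu(b))}\nabla_\Lambda f\,\nabla_\Lambda g\,\mathrm{d}\Lambda=\mathcal{E}_{\varrho\circ F_\mu^{-1},[F_\mu(a),F_\mu(b)]}(f,g),
\]
so the pull-back form equals the natural one, which settles row~$2$. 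For row~$3$ it remains to note that, $F_\mu$ being a homeomorphism, the connected components of $[F_\mu(a),F_\mu(b)]\setminus\supp(\varrho\circ F_\mu^{-1})$ are exactly the $F_\mu$-images of the components of $[a,b]\setminus\supp\varrho$, and that $y\mapsto\alpha+\beta y$ on such a component pulls back under $\iota_\mu$ to $x\mapsto\alpha+\beta F_\mu(x)$, i.e.\ to a function affine linear in $F_\mu$, and conversely; hence $\iota_\mu\bigl(\mathcal{C}_{\varrho\circ F_\mu^{-1}}([F_\mu(a),F_\mu(b)])\bigr)=\mathcal{C}_{\varrho,\mu}([a,b])$, and intersecting with the bijection of row~$2$ yields row~$3$.

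I do not expect a genuine obstacle here; the care is purely in the bookkeeping. One must confirm that the image measure of $\mu$ under $F_\mu$ really is $\Lambda|_{[F_\mu(a),F_\mu(b)]}$, that $\nu$-null sets transport faithfully through the conjugation so that the gradient identities are meaningful as equalities in the relevant $L^2$-spaces, and that the standing hypotheses $\supp\mu=[a,b]$, $\mu$ atomless and $\varrho(\{a,b\})=0$ are exactly what make $F_\mu$ a homeomorphism and keep the endpoints free of $\varrho$-mass, so that $\int_{(a,b)}$ and $\int_{[a,b]}$ agree for all integrands that occur. The only point where one has to pause is threading the almost-everywhere equivalence of the gradient operators correctly through $F_\mu$.
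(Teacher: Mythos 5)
Your proposal is correct and follows essentially the same route as the paper: use that $F_\mu$ is an increasing homeomorphism with $\mu\circ F_\mu^{-1}=\Lambda|_{[F_\mu(a),F_\mu(b)]}$, change variables to establish the gradient identities via the fundamental theorem of calculus, feed these into the forms to obtain the isometries, and handle row~$3$ by observing that $\iota_\mu$ carries affine-linear functions on a component of $[F_\mu(a),F_\mu(b)]\setminus\supp(\varrho\circ F_\mu^{-1})$ to functions affine-linear in $F_\mu$ on the corresponding component of $[a,b]\setminus\supp\varrho$. The only cosmetic difference is that you make the image-measure identity and the homeomorphism property of $F_\mu$ explicit up front, whereas the paper introduces them inline during the computation.
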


\begin{proof}
First we show that $\iota _{\mu }A_{i}$ is equal to the claimed spaces.
The case $i=1$ is clear. In treating the cases $i=2,3$ we will also show
the two identities regarding the derivatives. Indeed, for
$f\in H_{0}^{1}\left (\left [F_{\mu }(a),F_{\mu }(b)\right ]\right )$ and
all $x\in [a,b]$, we have
\begin{align*}
f\left (F_{\mu }(x)\right ) & =f\left (F_{\mu }(a)\right )+\int _{
\left [F_{\mu }(a),F_{\mu }(x)\right ]}\nabla _{\Lambda |_{\left [F_{
\mu }(a),F_{\mu }(b)\right ]}}f\;\mathrm{d}\Lambda
\\
& =f\left (F_{\mu }(a)\right )+\int _{\left [F_{\mu }(a),F_{\mu }(x)
\right ]}\nabla _{\Lambda |_{\left [F_{\mu }(a),F_{\mu }(b)\right ]}}f
\;\mathrm{d}\mu \circ F_{\mu }^{-1}
\\
& =f\left (F_{\mu }(a)\right )+\int _{\left [a,x\right ]}\left (
\nabla _{\Lambda |_{\left [F_{\mu }(a),F_{\mu }(b)\right ]}}f\right )
\circ F_{\mu }\;\mathrm{d}\mu .
\end{align*}
Hence,
$f\circ F_{\mu }\in H_{0,\mu }^{1}\left (\left [a,b\right ]\right )$ and
$\nabla _{\mu }\left (f\circ F_{\mu }\right )=\left (\nabla _{\Lambda |_{
\left [F_{\mu }(a),F_{\mu }(b)\right ]}}f\right )\circ F_{\mu }$. Using the
fact that
$f\in \dom \left (\mathcal{E}_{\varrho \circ F_{\mu }^{-1},\left [F_{
\mu }(a),F_{\mu }(b)\right ]}\right )$ is affine linear on the connected
components of
$\left [F_{\mu }(a),F_{\mu }(b)\right ]\setminus \supp \left (\varrho
\circ F_{\mu }^{-1}\right )$, we deduce that $f\circ F_{\mu }$ is affine
linear in $F_{\mu }$ on the components of
$[a,b]\setminus \supp (\varrho )$. Consequently, we have
$f\circ F_{\mu }\in \dom \left (\mathcal{E}_{\varrho ,\mu ,[a,b]}
\right )$. To see the reverse inclusion, note that for
$f\in H_{0,\mu }^{1}\left (\left [a,b\right ]\right )$ and
$x\in \left [F_{\mu }(a),F_{\mu }(b)\right ]$,
\begin{align*}
f\circ F_{\mu }^{-1}\left (x\right ) & =f\left (a\right )+\int _{
\left [a,F_{\mu }^{-1}\left (x\right )\right ]}\nabla _{\mu }f\;
\mathrm{d}\mu
\\
& =f\circ F_{\mu }^{-1}\left (F_{\mu }(a)\right )+\int _{\left [a,F_{
\mu }^{-1}\left (x\right )\right ]}\left (\nabla _{\mu }f\right )\circ F_{
\mu }^{-1}\circ F_{\mu }\;\mathrm{d}\mu
\\
& =f\circ F_{\mu }^{-1}\left (0\right )+\int _{[0,x]}\left (\nabla _{
\mu }f\right )\circ F_{\mu }^{-1}\;\mathrm{d}\Lambda ,
\end{align*}
where we used $F_{\mu }^{-1}\circ F_{\mu }=\idvtex _{[a,b]}$. Since
$a,b\in \supp (\mu )$, it follows
$f\circ F_{\mu }^{-1}\in H_{0}^{1}\left (\left [F_{\mu }(a),F_{\mu }(b)
\right ]\right )$ and
$\nabla _{\Lambda |_{\left [F_{\mu }(a),F_{\mu }(b)\right ]}}\left (f
\circ F_{\mu }^{-1}\right )=\left (\nabla _{\mu }f\right )\circ F_{\mu }^{-1}$.
As above using the fact that
$f\in \dom \left (\mathcal{E}_{\varrho ,\mu ,[a,b]}\right )$ is affine
linear in $F_{\mu }$ on the connected components of
$[a,b]\setminus \supp (\varrho )$, we deduce that
$f\circ F_{\mu }^{-1}$ is affine linear on the components of
$\left [F_{\mu }(a),F_{\mu }(b)\right ]\setminus \supp \left (\varrho
\circ F_{\mu }^{-1}\right )$. Consequently, we have
$f\circ F_{\mu }^{-1}\in \dom \left (\mathcal{E}_{\varrho \circ F_{\mu }^{-1},
\left [F_{\mu }(a),F_{\mu }(b)\right ]}\right )$.

To see that the pull-back inner products are as claimed, we note that the
case $i=1$ is again obvious. For $i=2,3$, we obtain by the above identities
for the derivatives that for $f,g\in H_{0,\mu }^{1}([a,b])$
\begin{align*}
\mathcal{E}_{\varrho ,\mu }\left (f,g\right ) & =\int _{[a,b]}\nabla _{
\mu }\left (f\circ F_{\mu }^{-1}\circ F_{\mu }\right )\nabla _{\mu }
\left (g\circ F_{\mu }^{-1}\circ F_{\mu }\right )\;\mathrm{d}\mu
\\
& =\int _{[a,b]}\left (\nabla _{\Lambda |_{\left [F_{\mu }(a),F_{\mu }(b)
\right ]}}f\circ F_{\mu }^{-1}\right )\circ F_{\mu }\cdot \left (
\nabla _{\Lambda |_{\left [F_{\mu }(a),F_{\mu }(b)\right ]}}g\circ F_{
\mu }^{-1}\right )\circ F_{\mu }\;\mathrm{d}\mu
\\
& =\int _{\left [F_{\mu }(a),F_{\mu }(b)\right ]}\nabla _{\Lambda |_{
\left [F_{\mu }(a),F_{\mu }(b)\right ]}}\left (f\circ F_{\mu }^{-1}
\right )\cdot \nabla _{\Lambda |_{\left [F_{\mu }(a),F_{\mu }(b)\right ]}}
\left (g\circ F_{\mu }^{-1}\right )\;\mathrm{d}\Lambda
\\
& =\mathcal{E}_{\varrho \circ F_{\mu }^{-1},\left [F_{\mu }(a),F_{\mu }(b)
\right ]}\left (f\circ F_{\mu }^{-1},g\circ F_{\mu }^{-1}\right ).
\end{align*}
Since $\iota _{\mu }$ is bijective and $A_{i}$ is a {Hilbert space with} respect to $\left (\cdot ,\cdot \right )_{A_{i}}$ for each for
$i=1,2,3$, we find that $\iota _{\mu }^{-1}$ restricted to
$\iota _{\mu }A_{i}$ defines an isometric isomorphism of Hilbert spaces
in all three cases.
\end{proof}
\begin{prop} 
\label{prop:DenseRichtigGeneralizedKreinFeller}
The set $\dom \left (\mathcal{E}_{\varrho ,\mu }\right )$ is dense in
$L_{\varrho }^{2}$ and equipped with the inner product
\begin{equation*}
(f,g)_{\mathcal{E}_{\varrho ,\mu }}\coloneqq \langle f,g\rangle _{
\varrho }+\mathcal{E}_{\varrho ,\mu }(f,g),
\end{equation*}
defines a Hilbert space, i.e.  $\mathcal{E}_{\varrho ,\mu }$ is closed
with respect to $L_{\varrho }^{2}$.
\end{prop}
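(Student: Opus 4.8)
The plan is to reduce the whole statement to the classical case $\mu=\Lambda$ by means of Lemma~\ref{lem:firstDiffBijection} and then to establish density and closedness there by a direct argument. First, recall from Lemma~\ref{lem:firstDiffBijection} (cases $i=1$ and $i=3$) that $\iota_\mu^{-1}\colon f\mapsto f\circ F_\mu^{-1}$ is at the same time an isometric isomorphism $L^2_\varrho([a,b])\to L^2_{\varrho\circ F_\mu^{-1}}([F_\mu(a),F_\mu(b)])$ and an isometry of $\bigl(\dom(\mathcal{E}_{\varrho,\mu}),\mathcal{E}_{\varrho,\mu}\bigr)$ onto $\bigl(\dom(\mathcal{E}_{\varrho\circ F_\mu^{-1}}),\mathcal{E}_{\varrho\circ F_\mu^{-1}}\bigr)$, mapping the first domain bijectively onto the second; hence it is also an isometry for the graph inner products $\langle\cdot,\cdot\rangle_\bullet+\mathcal{E}_\bullet(\cdot,\cdot)$. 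Since $F_\mu$ is a homeomorphism, $\nu\coloneqq\varrho\circ F_\mu^{-1}$ is a finite Borel measure on $[c,d]\coloneqq[F_\mu(a),F_\mu(b)]$ with $\nu(\{c,d\})=\varrho(\{a,b\})=0$, and as isometric isomorphisms preserve both denseness of a subspace in the ambient space and completeness, the Proposition for $(\varrho,\mu)$ is equivalent to the same assertion for $\nu$ in the Lebesgue case. So it remains to show that $\dom(\mathcal{E}_\nu)=H^1_0([c,d])\cap\mathcal{C}_\nu([c,d])$ is dense in $L^2_\nu$ and complete for the norm $f\mapsto\bigl(\lVert f\rVert_{L^2_\nu}^2+\int_{(c,d)}(\nabla_\Lambda f)^2\,\mathrm{d}\Lambda\bigr)^{1/2}$ (positive definiteness of this inner product being immediate, since $\nabla_\Lambda f=0$ together with $f(c)=0$ forces $f\equiv0$).

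For density I would argue as follows. Since $\nu$ is a finite Borel measure on the compact interval $[c,d]$, $\mathcal{C}([c,d])$ is dense in $L^2_\nu$, so it suffices to approximate a fixed $g\in\mathcal{C}([c,d])$ in $L^2_\nu$ by elements of $\dom(\mathcal{E}_\nu)$. Using $\nu(\{c\})=\nu(\{d\})=0$, I multiply $g$ by a continuous cut-off to reduce to the case $g(c)=0$ if $c\in\supp\nu$ and $g(d)=0$ if $d\in\supp\nu$. Then I approximate $g$ uniformly on $[c,d]$ by a continuous piecewise-linear function $h$ with $h(c)=h(d)=0$ and all of whose breakpoints lie in $\supp\nu$; this is possible by a covering argument, since only finitely many complementary intervals of $\supp\nu$ have length exceeding a prescribed $\delta$, so a finite $\delta$-net of $\supp\nu$ together with the endpoints of these long gaps (and $\min\supp\nu$, $\max\supp\nu$, $c$, $d$) provides admissible nodes with consecutive ones bracketing a point of $\supp\nu$ being $\delta$-close. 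Being Lipschitz, $h\in H^1_0([c,d])$, and since $h$ has no breakpoint strictly inside any component of $[c,d]\setminus\supp\nu$, it is affine on each such component, so $h\in\dom(\mathcal{E}_\nu)$; then $\lVert h-g\rVert_{L^2_\nu}\le\lVert h-g\rVert_\infty\,\nu([c,d])^{1/2}$ is as small as we wish.

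For closedness, take a sequence $(f_k)$ in $\dom(\mathcal{E}_\nu)$ that is Cauchy for the graph norm. Then $(f_k)$ is Cauchy in $L^2_\nu$ and $(\nabla_\Lambda f_k)$ is Cauchy in $L^2_\Lambda((c,d))$; I set $g\coloneqq\lim_k\nabla_\Lambda f_k$ there and $f(x)\coloneqq\int_{[c,x]}g\,\mathrm{d}\Lambda$. Since $f_k(c)=0$, Cauchy--Schwarz gives $\sup_{x\in[c,d]}\lvert f_k(x)-f(x)\rvert\le(d-c)^{1/2}\lVert\nabla_\Lambda f_k-g\rVert_{L^2_\Lambda}\to0$, whence $f(c)=f(d)=0$ and $f_k\to f$ uniformly, hence $f_k\to f$ in $L^2_\nu$; thus $f$ is the $L^2_\nu$-limit of $(f_k)$ and $f_k\to f$ in the graph norm as well. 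It remains to see that $f\in\dom(\mathcal{E}_\nu)$: by construction $f\in H^1_0([c,d])$ with $\nabla_\Lambda f=g$, and on each component $(u,v)$ of $[c,d]\setminus\supp\nu$ every $\nabla_\Lambda f_k$ is constant; since the constants form a finite-dimensional, hence closed, subspace of $L^2_\Lambda((u,v))$, $g$ is a.e.\ constant on $(u,v)$, i.e.\ $f$ is affine there. Therefore $f\in\mathcal{C}_\nu([c,d])\cap H^1_0([c,d])=\dom(\mathcal{E}_\nu)$ and the space is complete.

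The main obstacle is the density step. Elements of $\dom(\mathcal{E}_\nu)$ are required to lie in $H^1_0$ \emph{and} to be affine (in $F_\mu$, in the original setting) off $\supp\nu$, so the naive idea of keeping $g$ on $\supp\nu$ and interpolating linearly across the gaps cannot work --- for a Cantor-type support this can produce a singular, non-$H^1$ function. Restricting to finitely many breakpoints, all lying in $\supp\nu$, repairs this, at the price of the elementary covering lemma indicated above. By comparison, the closedness argument is routine: the constraint ``affine off $\supp\nu$'' localizes to each complementary interval and survives passage to the limit simply because a one-dimensional space of constants is closed.
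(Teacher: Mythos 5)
Your reduction to the Lebesgue case via Lemma~\ref{lem:firstDiffBijection} is exactly what the paper does: the isometry of $\iota_\mu^{-1}$ for both $L^2$ and the form, hence for the graph inner product, transports density and completeness between the two settings. Where you diverge is that the paper then simply cites \citep[Propositions 2.2, 2.3]{KN2022} for the classical case $\mu=\Lambda$, while you give a self-contained elementary proof: density via piecewise-linear interpolation with nodes in $\supp\nu$, closedness via the standard argument that a graph-Cauchy sequence has a gradient limit $g$, that $f=\int_c^{\cdot} g\,\d\Lambda$ is the uniform (hence $L^2_\nu$) limit, and that constancy of $g$ on each gap survives $L^2$-limits because a one-dimensional subspace is closed. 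Both halves of your base-case argument are sound, and the insight you flag --- that naively interpolating across Cantor-type gaps can leave $H^1$, so the breakpoints must be pinned to $\supp\nu$ --- is precisely the point that makes the density argument nontrivial. This makes your proof more transparent and self-contained, at the cost of re-deriving something the authors had already established elsewhere.

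One slip in presentation, not in substance: the linear interpolant $h$ is in general \emph{not} uniformly close to $g$ on all of $[c,d]$ --- on a long gap $(u,v)\subset[c,d]\setminus\supp\nu$ the interpolant can differ from $g$ by $O(1)$ --- so the bound $\lVert h-g\rVert_{L^2_\nu}\le\lVert h-g\rVert_\infty\,\nu([c,d])^{1/2}$ as stated is not useful. What you actually establish (and what suffices) is that $\sup_{\supp\nu}\lvert h-g\rvert$ is small: for $x\in\supp\nu$ strictly between consecutive nodes $t_i<t_{i+1}$, one checks (as your covering argument indicates, up to a harmless factor $3\delta$) that $t_{i+1}-t_i$ is small, so $h(x)$ is a convex combination of $g(t_i),g(t_{i+1})$ which are close to $g(x)$ by uniform continuity; and $\nu$ does not charge the long gaps. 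Replace the displayed estimate by $\lVert h-g\rVert_{L^2_\nu}^2=\int_{\supp\nu}\lvert h-g\rvert^2\,\d\nu\le\bigl(\sup_{\supp\nu}\lvert h-g\rvert\bigr)^2\nu([c,d])$ and the density step closes correctly.
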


\begin{proof}
For the classical case with $\mu =\Lambda $ this follows from
\citep[Proposition 2.2, Proposition 2.3]{KN2022}. The general case follows
from \reftext{Lemma~\ref{lem:firstDiffBijection}} and the fact that for all
$f\in \dom (\mathcal{E}_{\varrho ,\mu })$ we have
$\langle f,g\rangle _{\varrho }\leq \sqrt{\mu ([a,b])\varrho ([a,b])}
\mathcal{E}_{\varrho ,\mu }(f,g)$.
\end{proof}
\begin{rem}
\label{rem2.3}
Since
\begin{equation*}
\langle f,g\rangle _{\varrho }\leq \sqrt{\mu ([a,b])\varrho ([a,b])}
\mathcal{E}_{\varrho ,\mu }(f,g)
\end{equation*}
both bilinear forms $(\cdot ,\cdot )_{\mathcal{E}_{\varrho ,\mu }}$ and
$\mathcal{E}_{\varrho ,\mu }(\cdot ,\cdot )$ give rise to equivalent induced
norms.
\end{rem}

Using \reftext{Proposition~\ref{prop:DenseRichtigGeneralizedKreinFeller}}, we can
define a non-negative, self-adjoint, unbounded operator. Namely, we say
$f\in \dom (\mathcal{E}_{\varrho ,\mu })$ lies in the domain
$\mathcal{D}\left (\Delta _{\varrho ,\mu ,[a,b]}\right )$ of the\emph{ generalized
Kre\u{\i}n--Feller operator}
$\Delta _{\varrho ,\mu ,[a,b]}=\Delta _{\varrho ,\mu }$ if and only if
$g\mapsto \mathcal{E}_{\varrho ,\mu }(g,f)$ extends continuously to a linear
form on $L_{\varrho }^{2}$ and then $\Delta _{\varrho ,\mu }f$ is uniquely
determined by the identity
\begin{align*}
\mathcal{E}_{\varrho ,\mu }(g,f)=\langle g,\Delta _{\varrho ,\mu }f
\rangle _{\varrho },\;\;\text{ for all }\;\;g\in \dom \left (
\mathcal{E}_{\varrho ,\mu }\right ) & .
\end{align*}
If the measure $\mu $ is equal to the Lebesgue measure restricted to
$\left [a,b\right ]$, we call the associated Laplacian
$\Delta _{\varrho }\coloneqq \Delta _{\varrho ,\left [a,b\right ]}
\coloneqq \Delta _{\varrho ,\Lambda ,\left [a,b\right ]}$ the
\emph{classical Kre\u{\i}n--Feller operator}.

An element
$f\in \dom (\mathcal{E}_{\varrho ,\mu })\setminus \left \{  0\right \}  $
is called \emph{eigenfunction }for $\mathcal{E}_{\varrho ,\mu }$ with
\emph{eigenvalue} $\lambda $ if for all
$g\in \dom (\mathcal{E}_{\varrho ,\mu })$, we have
\begin{equation*}
\mathcal{E}_{\varrho ,\mu }(f,g)=\lambda \cdot \langle f,g\rangle _{
\varrho }.
\end{equation*}
The following lemma shows that there is a one-to-one correspondence between
the spectral properties of $\Delta _{\varrho ,\mu ,[a,b]}$ and the spectral
properties of
$\Delta _{\varrho \circ F_{\mu }^{-1},\left [F_{\mu }(a),F_{\mu }(b)
\right ]}$.
\begin{lem} 
\label{lem:_MeasureSpaceTrafo}%
If $f$ is an eigenfunction of $\Delta _{\varrho ,\mu ,[a,b]}$ with eigenvalue
$\lambda $, then $\iota _{\mu }^{-1}\left (f\right )$ is an eigenfunction
with eigenvalue $\lambda $ of
$\Delta _{\varrho \circ F_{\mu }^{-1},\left [F_{\mu }(a),F_{\mu }(b)
\right ]}$. Conversely, if $f$ is an eigenfunction of
$\Delta _{\varrho \circ F_{\mu }^{-1},\left [F_{\mu }(a),F_{\mu }(b)
\right ]}$ with eigenvalue $\lambda $, then
$\iota _{\mu }\left (f\right )$ is an eigenfunction with eigenvalue
$\lambda $ of $\Delta _{\varrho ,\mu ,[a,b]}$.
\end{lem}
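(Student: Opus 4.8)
The plan is to deduce the statement directly from \reftext{Lemma~\ref{lem:firstDiffBijection}}. The decisive input is that $\iota_\mu^{-1}$ restricts to an isometric isomorphism of Hilbert spaces simultaneously in case $i=1$ (between $L_{\varrho\circ F_\mu^{-1}}^2$ and $L_\varrho^2$) and in case $i=3$ (between $\dom(\mathcal{E}_{\varrho\circ F_\mu^{-1},[F_\mu(a),F_\mu(b)]})$ and $\dom(\mathcal{E}_{\varrho,\mu,[a,b]})$), and that under this identification one has $\mathcal{E}_{\varrho,\mu,[a,b]}(f,g)=\mathcal{E}_{\varrho\circ F_\mu^{-1},[F_\mu(a),F_\mu(b)]}(\iota_\mu^{-1}f,\iota_\mu^{-1}g)$ together with $\langle f,g\rangle_\varrho=\langle\iota_\mu^{-1}f,\iota_\mu^{-1}g\rangle_{\varrho\circ F_\mu^{-1}}$ for all $f,g$ in the form domain. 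In particular $\iota_\mu^{-1}$ is injective, so $f\neq0$ if and only if $\iota_\mu^{-1}f\neq0$, and the eigenfunction normalization is preserved.

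First I would take an eigenfunction $f\in\dom(\mathcal{E}_{\varrho,\mu,[a,b]})\setminus\{0\}$ of $\Delta_{\varrho,\mu,[a,b]}$ with eigenvalue $\lambda$, so that $\mathcal{E}_{\varrho,\mu}(f,g)=\lambda\,\langle f,g\rangle_\varrho$ for every $g\in\dom(\mathcal{E}_{\varrho,\mu})$. Writing $\widetilde f\coloneqq\iota_\mu^{-1}f$ and $\widetilde g\coloneqq\iota_\mu^{-1}g$, the two compatibility identities above turn this equation into $\mathcal{E}_{\varrho\circ F_\mu^{-1}}(\widetilde f,\widetilde g)=\lambda\,\langle\widetilde f,\widetilde g\rangle_{\varrho\circ F_\mu^{-1}}$. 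Since $g\mapsto\iota_\mu^{-1}g$ maps $\dom(\mathcal{E}_{\varrho,\mu,[a,b]})$ \emph{onto} $\dom(\mathcal{E}_{\varrho\circ F_\mu^{-1},[F_\mu(a),F_\mu(b)]})$, this identity holds for all elements of the latter domain; as $\widetilde f$ is a nonzero member of it, $\widetilde f=\iota_\mu^{-1}(f)$ is an eigenfunction of $\Delta_{\varrho\circ F_\mu^{-1},[F_\mu(a),F_\mu(b)]}$ with the same eigenvalue $\lambda$. (One could phrase this via the defining relation $\mathcal{E}_{\varrho,\mu}(g,f)=\langle g,\Delta_{\varrho,\mu}f\rangle_\varrho$ of the operator instead, but the eigenfunction formulation keeps the quantifier bookkeeping transparent.) For the converse I would run the identical computation with $\iota_\mu$ in place of $\iota_\mu^{-1}$, which is licit because $\iota_\mu$ is precisely the inverse of the isometric isomorphism $\iota_\mu^{-1}$ on each of the spaces involved; thus an eigenfunction $f$ of $\Delta_{\varrho\circ F_\mu^{-1},[F_\mu(a),F_\mu(b)]}$ with eigenvalue $\lambda$ is carried by $\iota_\mu$ to an eigenfunction of $\Delta_{\varrho,\mu,[a,b]}$ with the same eigenvalue.

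I do not expect a genuine obstacle at this stage: the only points requiring a moment of care are that the universal quantifier ``for all $g$ in the domain'' transports correctly under the bijection $\iota_\mu^{-1}$ of the form domains, and that nonvanishing of the eigenfunction is preserved --- both immediate from the isometric-isomorphism statement of \reftext{Lemma~\ref{lem:firstDiffBijection}}. The substantive work, namely establishing the compatibility of the bilinear forms $\mathcal{E}_{\cdot}$ and of the inner products $\langle\cdot,\cdot\rangle_\cdot$ with $\iota_\mu^{\pm1}$ (and the intertwining $\nabla_\mu\iota_\mu=\iota_\mu\nabla_{\Lambda}$ underlying it), has already been carried out in the proof of that lemma, so the present proof is essentially a translation of the eigenvalue equation through that isometry.
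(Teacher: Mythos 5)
Your proof is correct and takes essentially the same route as the paper: both deduce the claim from Lemma~\ref{lem:firstDiffBijection}. You invoke the isometric-isomorphism conclusion of that lemma directly (cases $i=1$ and $i=3$) to transport the eigenfunction identity $\mathcal{E}_{\varrho,\mu}(f,g)=\lambda\langle f,g\rangle_\varrho$ through the bijection of form domains, whereas the paper re-derives the form compatibility via an explicit change-of-variables computation using the derivative intertwining from the same lemma; the underlying content is identical, and your version is a slightly more abstract packaging of the paper's argument.
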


\begin{proof}
If $f$ be an eigenfunction of $\Delta _{\varrho ,\mu }$ with eigenvalue
$\lambda $, then for all
$g\in \dom \left (\mathcal{E}_{\varrho ,\mu }\right )$, by \reftext{Lemma~\ref{lem:firstDiffBijection}} and
$F_{\mu }^{-1}\circ F_{\mu }=\idvtex _{[a,b]}$, we have
\begin{align*}
&\int  _{\left [F_{\mu }(a),F_{\mu }(b)\right ]}
\nabla _{\Lambda }\left (f\circ F_{\mu }^{-1}\right )\nabla _{\Lambda }
\left (g\circ F_{\mu }^{-1}\right )\;\mathrm{d}\Lambda\\
 & \quad =\int
 _{\left [F_{\mu }(a),F_{\mu }(b)\right ]}\left (
\nabla _{\mu }f\right )\circ F_{\mu }^{-1}\!\cdot \left (\nabla _{\mu }g
\right )\circ F_{\mu }^{-1}\;\mathrm{d}\mu \circ F_{\mu }^{-1}
\\
&\quad  =\int _{[a,b]}\nabla _{\mu }f\nabla _{\mu }g\;\mathrm{d}\mu =\lambda
\int _{[a,b]}gf\;\mathrm{d}\varrho
\\
&\quad  =\lambda \int  _{\left [F_{\mu }(a),F_{\mu }(b)\right ]}g
\circ F_{\mu }^{-1}\cdot f\circ F_{\mu }^{-1}\;\mathrm{d}\varrho \circ F_{
\mu }^{-1},
\end{align*}
which shows that $f\circ F_{\mu }^{-1}$ is an eigenfunction of
$\Delta _{\varrho \circ F_{\varrho }^{-1},\left [F_{\mu }(a),F_{\mu }(b)
\right ]}$ with eigenvalue $\lambda $. The reverse implication is similar.
\end{proof}
\begin{rem}
\label{rem2.5}
Recall from \citep{KN2022} that the inclusion from the Hilbert space
$\left (\dom \left (\mathcal{E}_{\varrho ,\Lambda }\right ),
\mathcal{E}_{\varrho ,\Lambda }\right )$ into $L_{\varrho }^{2}$ is compact.
Hence, we conclude that there exists an orthonormal system of eigenfunctions
of $\Delta _{\varrho ,\Lambda }$ of $L_{\varrho }^{2}$ with non-negative
eigenvalues
$\left (\lambda _{\varrho ,\Lambda ,[a,b]}^{n}\right )_{n\in
\mathbb{N}}$ in increasing order tending to $\infty $ given
$\supp (\varrho )$ is not finite (see e.g.
\citep[Theorem 4.5.1 and p. 258]{MR1193032}), we write
$\lambda _{\varrho }^{n}\coloneqq \lambda _{\varrho ,\Lambda ,[0,1]}^{n}$. By \reftext{Lemma~\ref{lem:_MeasureSpaceTrafo}} the same holds true for
$\Delta _{\varrho ,\mu ,[a,b]}$ with eigenvalues
$\left (\lambda _{\varrho ,\mu }^{n}\right )_{n\in \mathbb{N}}$.
\end{rem}

We denote the number of eigenvalues of
$\mathcal{E}_{\varrho ,\mu }$ not exceeding $x$ by
$N_{\varrho ,\mu ,[a,b]}\left (x\right )$ and refer to
$N_{\varrho ,\mu ,[a,b]}$ as the
\emph{eigenvalue counting function. }In the case $\mu =\Lambda $ we write
$N_{\varrho ,\Lambda ,[a,b]}=N_{\varrho ,[a,b]}=N_{\varrho }$. The remaining
two observations in this section will play a central role in the proofs
of our main results.
\begin{lem} 
\label{lem:dom_vs_H01_Minmax}%
For all $i\in \mathbb{N}$, we have
\begin{align*}
\lambda _{\varrho ,\mu }^{i} & =\inf \left \{  \sup \left \{
\frac{\mathcal{E}_{\varrho ,\mu }(\psi ,\psi )}{\langle \psi ,\psi \rangle _{\varrho }}:
\psi \in G\setminus \left \{  0\right \}  \right \}  \colon G
\text{ $i$-dim. subspace of }\left (\dom \left (\mathcal{E}_{\varrho ,
\mu }\right ),\mathcal{E}_{\varrho ,\mu }\right )\right \}
\\
& =\inf \left \{  \sup \left \{
\frac{\mathcal{E}_{\varrho ,\mu }(\psi ,\psi )}{\langle \psi ,\psi \rangle _{\varrho }}:
\psi \in G\setminus \left \{  0\right \}  \right \}  \colon G
\text{ $i$-dim. subspace of }\left (H_{0,\mu }^{1}([a,b]),\mathcal{E}_{
\varrho ,\mu }\right )\right \}  .
\end{align*}
\end{lem}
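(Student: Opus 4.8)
The plan is to read off the first equality from the Courant--Fischer min-max principle and then to bridge to the variational problem over $H_{0,\mu}^1([a,b])$ by means of a linearization operator that does not increase the Rayleigh quotient.

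For the first equality, recall that by Proposition~\ref{prop:DenseRichtigGeneralizedKreinFeller} the form $\mathcal{E}_{\varrho,\mu}$ is densely defined and closed in $L_\varrho^2$, and by Remark~\ref{rem2.5} the operator $\Delta_{\varrho,\mu}$ has purely discrete spectrum; I would therefore simply invoke the standard min-max theorem (see e.g.\ \citep{MR1193032}, and \citep{KN2022} for the case $\mu=\Lambda$) to obtain the first displayed identity, noting that for $\psi\in\dom(\mathcal{E}_{\varrho,\mu})\setminus\{0\}$ one has $\langle\psi,\psi\rangle_\varrho>0$ --- indeed, if $\psi=0$ $\varrho$-almost everywhere then $\psi$ vanishes on $\supp(\varrho)\cup\{a,b\}$ and, being affine linear in $F_\mu$ on each component of $[a,b]\setminus\supp(\varrho)$, vanishes identically --- so all Rayleigh quotients in question are finite. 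Since $\dom(\mathcal{E}_{\varrho,\mu})=H_{0,\mu}^1([a,b])\cap\mathcal{C}_{\varrho,\mu}([a,b])\subset H_{0,\mu}^1([a,b])$, every $i$-dimensional subspace of $\dom(\mathcal{E}_{\varrho,\mu})$ is also one of $H_{0,\mu}^1([a,b])$, so the infimum over the larger admissible family is at most $\lambda_{\varrho,\mu}^i$; this already gives the trivial half of the second equality.

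For the reverse inequality I would introduce the linearization $\Pi\colon H_{0,\mu}^1([a,b])\to\dom(\mathcal{E}_{\varrho,\mu})$ that leaves $f$ unchanged on $\supp(\varrho)\cup\{a,b\}$ and, on each connected component $(c,d)$ of $[a,b]\setminus\supp(\varrho)$, replaces it by the unique function that is affine linear in $F_\mu$ there and matches the boundary values $f(c),f(d)$ (for $\mu=\Lambda$ this is precisely the operator used in \citep{KN2022}). Using $\int_{(c,d)}\nabla_\mu f\,\mathrm{d}\mu=f(d)-f(c)$ on each such component, together with the splitting of $(a,b)$ into $\supp(\varrho)\cap(a,b)$ and the at most countably many complementary components, one checks that $\Pi f\in H_{0,\mu}^1([a,b])$ --- with $\nabla_\mu(\Pi f)$ equal $\mu$-a.e.\ to $\nabla_\mu f$ on $\supp(\varrho)$ and to the constant $(f(d)-f(c))/\mu((c,d))$ on each $(c,d)$ --- hence $\Pi f\in\dom(\mathcal{E}_{\varrho,\mu})$, and that
\[
\langle\Pi f,\Pi f\rangle_\varrho=\langle f,f\rangle_\varrho,\qquad \mathcal{E}_{\varrho,\mu}(\Pi f,\Pi f)\le\mathcal{E}_{\varrho,\mu}(f,f),
\]
the identity because $\Pi f=f$ on $\supp(\varrho)$, and the inequality because on each complementary component Cauchy--Schwarz gives $\mu((c,d))^{-1}(f(d)-f(c))^2\le\int_{(c,d)}|\nabla_\mu f|^2\,\mathrm{d}\mu$ while the integrand is unchanged on $\supp(\varrho)$ (equivalently, $\Pi$ is the $\mathcal{E}_{\varrho,\mu}$-orthogonal projection of $H_{0,\mu}^1([a,b])$ onto $\dom(\mathcal{E}_{\varrho,\mu})$, hence a contraction). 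Now let $G\subset H_{0,\mu}^1([a,b])$ be an $i$-dimensional subspace. If $\Pi|_G$ is not injective, then some $\psi\in G\setminus\{0\}$ satisfies $\langle\psi,\psi\rangle_\varrho=0$ but $\mathcal{E}_{\varrho,\mu}(\psi,\psi)>0$, so the supremum of the Rayleigh quotient over $G$ equals $+\infty$ and $G$ does not contribute to the infimum; otherwise $\Pi G\subset\dom(\mathcal{E}_{\varrho,\mu})$ is again $i$-dimensional and the two relations above give $\mathcal{E}_{\varrho,\mu}(\Pi\psi,\Pi\psi)/\langle\Pi\psi,\Pi\psi\rangle_\varrho\le\mathcal{E}_{\varrho,\mu}(\psi,\psi)/\langle\psi,\psi\rangle_\varrho$ for all $\psi\in G\setminus\{0\}$, so the supremum over $\Pi G$ is at most that over $G$. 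Taking the infimum over $G$ then yields $\lambda_{\varrho,\mu}^i\le$ (the infimum over $i$-dimensional subspaces of $H_{0,\mu}^1([a,b])$), which together with the trivial half finishes the proof.

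The only nonroutine point, I expect, is the verification that $\Pi$ maps into $H_{0,\mu}^1([a,b])$ --- i.e.\ the correct identification of $\nabla_\mu(\Pi f)$ --- together with the contraction (orthogonal-projection) property of $\Pi$; everything else is the textbook min-max theorem and elementary bookkeeping with the decomposition of $[a,b]$ into $\supp(\varrho)$ and its complementary intervals. A shortcut circumventing $\Pi$ is to invoke Lemma~\ref{lem:firstDiffBijection}: the isometric isomorphism $\iota_\mu^{-1}$ preserves $\langle\cdot,\cdot\rangle_\varrho$ and $\mathcal{E}_{\varrho,\mu}$ and carries $H_{0,\mu}^1([a,b])$ onto $H_0^1([F_\mu(a),F_\mu(b)])$ and $\dom(\mathcal{E}_{\varrho,\mu})$ onto $\dom(\mathcal{E}_{\varrho\circ F_\mu^{-1}})$, so the general case reduces to $\mu=\Lambda$.
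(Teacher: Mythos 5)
Your proposal is correct, but your \emph{primary} argument takes a genuinely different route from the paper. The paper's proof is a one-line reduction: transport everything to the Lebesgue case via the isometric isomorphism $\iota_\mu^{-1}$ from Lemma~\ref{lem:firstDiffBijection} (together with the spectral correspondence of Lemma~\ref{lem:_MeasureSpaceTrafo}), and then cite \citep[Lemma 2.7]{KN2022} for the $\mu=\Lambda$ statement. That is exactly the ``shortcut'' you mention at the end. Your main argument instead re-proves the content of \citep[Lemma 2.7]{KN2022} directly in the general-$\mu$ setting: Courant--Fischer for the first identity, the trivial inclusion $\dom(\mathcal{E}_{\varrho,\mu})\subset H^1_{0,\mu}([a,b])$ for one half of the second, and for the other half the piecewise-$F_\mu$-affine interpolant $\Pi$, which you correctly identify as the $\mathcal{E}_{\varrho,\mu}$-orthogonal projection onto $\dom(\mathcal{E}_{\varrho,\mu})$, together with the Cauchy--Schwarz contraction estimate on each complementary interval. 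Both approaches are sound; yours is self-contained and shows that the projection argument goes through verbatim with $F_\mu$ in place of $F_\Lambda$, while the paper's is shorter because the transfer lemma has already been set up and the Lebesgue case is already available in the cited reference. One minor stylistic point: in the non-injectivity branch you could avoid a case distinction altogether by observing that, since $\ker\Pi\cap\dom(\mathcal{E}_{\varrho,\mu})=\{0\}$ and $\Pi$ is idempotent onto $\dom(\mathcal{E}_{\varrho,\mu})$, the infimum over $i$-dimensional $G\subset H^1_{0,\mu}$ is automatically attained among those $G$ with $\Pi|_G$ injective; but the argument you give is perfectly adequate.
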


\begin{proof}
This follows from \citep[Lemma 2.7]{KN2022} in tandem with \reftext{Lemma~\ref{lem:_MeasureSpaceTrafo}} and \reftext{Lemma~\ref{lem:firstDiffBijection}}.
\end{proof}
\begin{thm} 
\label{thm:CompareCountingfunctions}
Let $(a_{i})_{i=0,\dots ,n+1}$ be a subdivision vector of $[a,b]$ such
that
\begin{equation*}
a=a_{0}<a_{1}<\cdots <a_{n+1}=b
\end{equation*}
and $\varrho \left (\left \{  a_{i}\right \}  \right )=0$. Then, for all
$x\geq 0$, we have
\begin{equation*}
\sum _{i=0}^{n}N_{\varrho ,\mu ,[a_{i},a_{i+1}]}(x)\leq N_{\varrho ,
\mu }(x)\leq \sum _{i=0}^{n}N_{\varrho ,\mu ,[a_{i},a_{i+1}]}(x)+n.
\end{equation*}
\end{thm}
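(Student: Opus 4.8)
The plan is to use the min–max (Courant–Fischer) characterization of the eigenvalues from \reftext{Lemma~\ref{lem:dom_vs_H01_Minmax}}, exploiting the fact that Dirichlet boundary conditions make the bilinear form behave additively under subdivision. Since the estimate is additive in the subdivision points, it suffices by induction to treat a single interior point; that is, I reduce to the case $n=1$, where $a=a_0<a_1<a_2=b$ and $\varrho(\{a_1\})=0$. For the general statement one then splits $[a,b]$ at $a_1$, applies the two-piece estimate, and iterates, the constant $n$ accumulating by one at each of the $n$ interior points.

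First I would set up the decomposition of the form domain. Because functions in $\dom(\mathcal{E}_{\varrho,\mu})\subset H^1_{0,\mu}([a,b])$ vanish at the endpoints, restriction gives a natural map
\[
\dom\bigl(\mathcal{E}_{\varrho,\mu,[a,b]}\bigr)\longrightarrow \dom\bigl(\mathcal{E}_{\varrho,\mu,[a,a_1]}\bigr)\oplus\dom\bigl(\mathcal{E}_{\varrho,\mu,[a_1,b]}\bigr),\qquad f\mapsto (f|_{[a,a_1]},f|_{[a_1,b]}),
\]
which is \emph{not} surjective (the two pieces need not agree at $a_1$) but for which $\langle\cdot,\cdot\rangle_\varrho$ and $\mathcal{E}_{\varrho,\mu}$ split as orthogonal sums over the two subintervals; here the hypothesis $\varrho(\{a_1\})=0$ is exactly what guarantees that the $L^2_\varrho$-norm does not see the point $a_1$, so the decomposition is isometric on the image. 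Conversely, the subspace of the direct sum consisting of pairs that \emph{do} match at $a_1$ embeds isometrically back into $\dom(\mathcal{E}_{\varrho,\mu,[a,b]})$, and it has codimension at most $1$ in the direct sum.

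For the lower bound $N_{\varrho,\mu,[a,a_1]}(x)+N_{\varrho,\mu,[a_1,b]}(x)\le N_{\varrho,\mu}(x)$: take an eigenfunction basis on each piece spanning the spectral subspace below $x$, extend each by zero across $a_1$ (legitimate since these are Dirichlet eigenfunctions vanishing at $a_1$), and observe that the span of all these extensions is a subspace of $\dom(\mathcal{E}_{\varrho,\mu,[a,b]})$ of dimension $N_{\varrho,\mu,[a,a_1]}(x)+N_{\varrho,\mu,[a_1,b]}(x)$ on which the Rayleigh quotient is bounded by $x$; the min–max formula then yields the claim. For the upper bound $N_{\varrho,\mu}(x)\le N_{\varrho,\mu,[a,a_1]}(x)+N_{\varrho,\mu,[a_1,b]}(x)+1$: one uses the dual min–max (max–min) form, or equivalently a dimension count — a test subspace $G\subset \dom(\mathcal{E}_{\varrho,\mu,[a,b]})$ of dimension $N_{\varrho,\mu,[a,a_1]}(x)+N_{\varrho,\mu,[a_1,b]}(x)+2$ intersects the codimension-$1$ matching subspace nontrivially after restriction, forcing a vector whose restriction to one of the two halves lies outside that half's spectral subspace below $x$ and hence has Rayleigh quotient $>x$; the standard counting argument (as in the proof of \reftext{Lemma~\ref{lem:dom_vs_H01_Minmax}}) converts this into the stated bound. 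Finally, induction on the number of interior points $n$ gives the full inequality with the additive constant $n$.

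The main obstacle I anticipate is the bookkeeping around the fact that the restriction map on form domains is neither injective-with-closed-image-behaving-nicely nor surjective: one must carefully check that restricting to a subinterval maps $\dom(\mathcal{E}_{\varrho,\mu,[a,b]})$ \emph{into} $\dom(\mathcal{E}_{\varrho,\mu,[a_i,a_{i+1}]})$ (the affine-linearity-in-$F_\mu$ condition on components of $[a,b]\setminus\supp\varrho$ must be inherited on each subinterval — which it is, since a component of $[a_i,a_{i+1}]\setminus\supp\varrho$ is contained in a component of $[a,b]\setminus\supp\varrho$) and that the zero-extension of a Dirichlet eigenfunction on a subinterval genuinely lands in $H^1_{0,\mu}([a,b])\cap\mathcal{C}_{\varrho,\mu}([a,b])$. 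Both points hinge on $\varrho(\{a_i\})=0$ and on the Dirichlet condition, but they should be verified explicitly rather than waved through; everything else is the standard Courant–Fischer interlacing argument.
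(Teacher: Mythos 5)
The paper's own proof is a one-liner: it cites the classical case $\mu=\Lambda$ from \citep[Proposition 2.16]{KN2022} and transports it to general $\mu$ via the isometry of Lemma~\ref{lem:_MeasureSpaceTrafo}. Your proposal instead re-derives the interlacing from scratch by Courant--Fischer bracketing; this is the content underlying the cited proposition, so it is a more self-contained (if longer) route to the same place, and both are standard.

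There is, however, a concrete flaw in how you set up the decomposition. For a generic $f\in\dom(\mathcal{E}_{\varrho,\mu,[a,b]})$ one has $f(a_1)\neq 0$, so $f|_{[a,a_1]}\notin H^1_{0,\mu}([a,a_1])$, and the restriction map you display does not land in $\dom(\mathcal{E}_{\varrho,\mu,[a,a_1]})\oplus\dom(\mathcal{E}_{\varrho,\mu,[a_1,b]})$ at all. Relatedly, ``pairs that match at $a_1$'' is a vacuous condition on that direct sum, since both summands already vanish at $a_1$ by the Dirichlet condition; what actually has codimension $1$ is $\{f\in H^1_{0,\mu}([a,b]):f(a_1)=0\}$ inside $H^1_{0,\mu}([a,b])$. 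Finally, the zero-extension of a subinterval eigenfunction need not lie in $\mathcal{C}_{\varrho,\mu}([a,b])$: if $a_1$ lies in the interior of a component $I$ of $[a,b]\setminus\supp(\varrho)$, the extension is affine in $F_\mu$ on each half of $I$ but in general not on all of $I$, so it is not in $\dom(\mathcal{E}_{\varrho,\mu,[a,b]})$. All three issues are repaired by the same move: invoke Lemma~\ref{lem:dom_vs_H01_Minmax} and run the min-max over $H^1_{0,\mu}$ rather than over $\dom(\mathcal{E}_{\varrho,\mu})$. Then the zero-extensions live in $H^1_{0,\mu}([a,b])$, the identification $\{f\in H^1_{0,\mu}([a,b]):f(a_1)=0\}\cong H^1_{0,\mu}([a,a_1])\oplus H^1_{0,\mu}([a_1,b])$ is well-defined and isometric for both the form (since $\mu$ is atomless) and the $L^2_{\varrho}$-norm (using $\varrho(\{a_1\})=0$), and the lower bound by extension plus the upper bound by a codimension count on the orthogonal complements of the subinterval spectral subspaces go through as you sketch, with the additive constant $n$ coming from the $n$ evaluation functionals $f\mapsto f(a_i)$.
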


\begin{proof}
This follows from \citep[Proposition 2.16]{KN2022} and \reftext{Lemma~\ref{lem:_MeasureSpaceTrafo}}.
\end{proof}
 
\section{The $L^{q}$-spectrum}
\label{sec3}

Let us recall some basic facts on the $L^{q}$-spectrum as presented in
\citep{KN2022}.  In this section, let $\varrho $ be any given finite Borel 
measures on $\left [0,1\right ]$ with
$\card \left (\supp (\varrho )\right )=\infty $. Note that
$\lambda $ is eigenvalue of $\Delta _{\varrho }$ if, and only if
$\lambda /\varrho ([0,1])$ is eigenvalue of
$\Delta _{\varrho /\varrho ([0,1])}$. Hence, without loss of generality
we assume that $\varrho $ is a probability measure. We begin this section
with some additional properties of the $L^{q}$-spectrum of
$\varrho $ as given in \reftext{(\ref{eq:Lq-spectrum})}. The function
$\beta _{\varrho }$ will not alter when we take $d$-adic intervals instead
of dyadic ones. (see e.g.
\citep[Proposition 2 and Remarks, p. 466]{MR1312056} or
\citep[Proposition 1.6]{Riedi_diss}) and note that the definition
in \citep[Proposition 1.6]{Riedi_diss} coincides with our definition
for $q\geq 0$. More precisely, for fixed $\delta >0$, let
us define
\[
G_{\delta }\coloneqq \left \{  \left (l\delta ,(l+1)\delta \right ]:l
\in \mathbb{Z},\,\varrho \left (\left (l\delta ,(l+1)\delta \right ]
\right )>0\right \}
\]
and let $(\delta _{n})\in (0,1)^{\mathbb{N}}$ be with
$\delta _{n}\rightarrow 0$ an \emph{admissible} sequence i.e.  there
exists a constant $C>0$ such that for all $n\in \mathbb{N}$ we have
$C\delta _{n}\leq \delta _{n+1}\leq \delta _{n}$. Then, for
$q\geq 0$,
\begin{equation*}
\limsup _{\delta \downarrow 0}\frac{1}{-\log (\delta )}\log \sum _{C
\in G_{\delta }}\varrho \left (C\right )^{q}=\limsup _{m\rightarrow
\infty }\frac{1}{-\log (\delta _{m})}\log \sum _{C\in G_{\delta _{m}}}
\varrho \left (C\right )^{q}.
\end{equation*}
In particular, for $\delta _{m}=2^{-m}$ we obtain
\begin{equation*}
\limsup _{\delta \downarrow 0}\frac{1}{-\log (\delta )}\log \sum _{C
\in G_{\delta }}\varrho \left (C\right )^{q}=\beta _{\varrho }(q).
\end{equation*}
The function $\beta _{\varrho }$ is as a pointwise limit superior of convex
function again convex and we have
\begin{equation*}
\beta _{\varrho }\left (0\right )=\overline{\dim }_{M}\left (\supp
\left (\varrho \right )\right )\,\,\,\,\text{and }\beta _{\varrho }
\left (1\right )=0.
\end{equation*}
Hence, the Legendre transform is given by
\begin{equation*}
\widehat{\beta }_{\varrho }\left (\alpha \right )\coloneqq \inf _{q}
\beta _{\varrho }\left (q\right )+\alpha q.
\end{equation*}
The critical exponent $q_{\varrho }$ defined in the introduction can also
{be characterized as} follows (see \citep[Fact 4.8]{KN2022}).
\begin{align*}
q_{\varrho } & =\inf \left \{  q>0\colon \limsup _{n\to \infty }
\frac{1}{n}\log \sum _{C\in \mathcal{D}_{n}}\left (\varrho \left (C
\right )\Lambda \left (C\right )\right )^{q}\leq 0\right \}
\\
& =\inf \left \{  q>0:\sum _{C\in \mathcal{D}}\left (\varrho \left (C
\right )\Lambda \left (C\right )\right )^{q}<\infty \right \}  =\sup _{
\alpha \geq 0}
\frac{\widehat{\beta }_{\varrho }\left (\alpha \right )}{1+\alpha }.
\end{align*}
 
\section{Iterated function systems and the thermodynamic formalism} 
\label{sec:Conformal-iterated-function}

In the following we consider the special case of weak $\psi $-Gibbs measure
with respect to not necessary linear iterated function systems. For fixed
$n\in \mathbb{N}$ we call the family
$\Phi \coloneqq \left \{  T_{i}:\left [0,1\right ]\to \left [0,1
\right ]\colon i=1,\dots ,n\right \}  $ a
\emph{$\mathcal{C}^{1}$-iterated function systems }($\mathcal{C}^{1}$-IFS)
if its members are $\mathcal{C}^{1}$-maps such that
\begin{enumerate}
\item we have \emph{uniform contraction}, i.e.  for all $j\in I$ we have
$\sup _{x\in [0,1]}\left |T_{j}'(x)\right |<1$,
\item the derivatives $T_{1}',\ldots ,T_{n}'$ are bounded away from zero,
i.e.  for all $i\in I$ we have
$0<\inf _{x\in [0,1]}\left |T_{j}'(x)\right |$,
\item $\Phi $ is non-trivial, i.e.  there is more than one contraction
and the $T_{i}$'s do not share a common fixed point.
\end{enumerate}
If additionally the $T_{1},\dots ,T_{n}$ are
$\mathcal{C}^{1+\gamma }$-maps with $\gamma \in \left (0,1\right )$, we
call the system a
\emph{$\mathcal{C}^{1+\gamma }$ iterated function systems} ($
\mathcal{C}^{1+\gamma }$-IFS). Here $\mathcal{C}^{1+\gamma }$ denotes the
set of differentiable maps with $\gamma $-H\"{o}lder continuous derivative.
We call the unique nonempty compact invariant set
$K\subset \left [0,1\right ]$ of a $\mathcal{C}^{1}$-IFS $\Phi $ the \emph{self-conformal set} associated to $\Phi $.

Let $I\coloneqq \left \{  1,\dots ,n\right \}  $ denote the alphabet and
$I^{m}$ the set of words of length $m\in \mathbb{N}$ over $I$ and by
$I^{*}=\bigcup _{m\in \mathbb{N}}I^{m}\cup \left \{  \varnothing
\right \}  $ we refer to the set of all words with finite length including
the empty word $\varnothing $. Furthermore, the set of words with infinite
length will be denoted by $I^{\mathbb{N}}$ equipped with the metric
$d(x,y)\coloneqq 2^{-\sup \left \{  i\in \mathbb{N}:x_{i}\neq y_{i}
\right \}  }$ and let
$\mathfrak{\mathcal{B}}\left (I^{\mathbb{N}}\right )$ denote the Borel
$\sigma $-algebra of $I^{\mathbb{N}}$. The length of a finite word
$\omega \in I^{*}$ will be denoted by $|\omega |$ and for the concatenation
of $\omega \in I^{*}$ with $x\in I^{*}\cup I^{\mathbb{N}}$ we write
$\omega x$. The shift-map
$\sigma :I^{\mathbb{N}}\cup I^{*}\rightarrow I^{\mathbb{N}}\cup I^{*}$ is
defined by $\sigma (\omega )=\varnothing $ for
$\omega \in I\cup \left \{  \varnothing \right \}  $,
$\sigma (\omega _{1}\cdots \omega _{m})=\omega _{2}\cdots \omega _{m}$
for $\omega _{1}\cdots \omega _{m}\in I^{m}$ with $m>1$ and
$\sigma \left (\omega _{1}\omega _{2},\ldots \right )=\left (\omega _{2}
\omega _{3}\ldots \right )$ for
$\left (\omega _{1}\omega _{2}\cdots \right )\in I^{\mathbb{N}}$. The cylinder set generated by
$\omega \in I^{*}$ is defined by
$\left [\omega \right ]\coloneqq \left \{  \omega x:x\in I^{\mathbb{N}}
\right \}  \subset I^{\mathbb{N}}$. Note that
$\mathfrak{\mathcal{B}}\left (I^{\mathbb{N}}\right )$ is generated by the
set of cylinders sets of arbitrary lengths. The set of $\sigma $-invariant
probability measures on $\mathcal{B}\left (I^{\mathbb{N}}\right )$ is denoted
by $\mathcal{M}_{\sigma }\left (I^{\mathbb{N}}\right )$, where the measure
$\nu $ is called $\sigma $-invariant if
$\nu =\nu \circ \sigma ^{-1}$. Further, for
$u=u_{1}\cdots u_{n}\in I^{n}$, $n\in \mathbb{N}$, we set
$u^{-}=u_{1}\cdots u_{n-1}$. We say $P\subset I^{*}$ is a partition of
$I^{\mathbb{N}}$ if
\begin{equation*}
\bigcup _{\omega \in P}[\omega ]=I^{\mathbb{N}}\:\text{and}\:
[\omega ]\cap [\omega ']=\varnothing ,\,\text{for all }\omega ,
\omega '\in P\:\text{with}\:\omega \neq \omega '.
\end{equation*}
Now, we are able to give a coding of the self-conformal set in terms of
$I^{\mathbb{N}}$. For $\omega \in I^{*}$ we put
$T_{\omega }\coloneqq T_{\omega _{1}}\circ \cdots \circ T_{\omega _{n}}$
and define $T_{\varnothing }\coloneqq \idvtex _{[0,1]}$ to be the identity
map on $[0,1]$. For
$\left (\omega _{1}\omega _{2}\cdots \right )\in I^{\mathbb{N}}$ and
$m\in \mathbb{N}$ we define the initial word by
$\omega |_{n}\coloneqq \omega _{1}\cdots \omega _{n}$. For every
$\omega \in I^{\mathbb{N}}$ the intersection
$\bigcap _{n\in \mathbb{N}}T_{\omega |_{n}}([0,1])$ contains exactly one
point $x_{\omega }\in K$ and gives rise to a surjection
$\pi :I^{\mathbb{N}}\rightarrow K$, $\omega \mapsto x_{\omega }$, which we
call the \emph{natural coding map}. Let
$\mathcal{C}\left (I^{\mathbb{N}}\right )$ denote the space of continuous
real valued functions on $I^{\mathbb{N}}$. Fix
$\psi \in \mathcal{C}\left (I^{\mathbb{N}}\right )$ (sometimes called
\emph{potential function}). For
$f\in \mathcal{C}\left (I^{\mathbb{N}}\right )$ we define the
\emph{Perron-Frobenius operator (with respect to $\psi $)} via
$L_{\psi }f(x)\coloneqq \sum _{y\in \sigma ^{-1}x}\mathrm{e}^{\psi
\left (y\right )}f\left (y\right )$, $x\in I^{\mathbb{N}}$.
\begin{defn} 
\label{def:Gibbs_measure}%
For $f\in \mathcal{C}\left (I^{\mathbb{N}}\right )$,
$\alpha \in (0,1)$ and $n\in \mathbb{N}_{0}$ define
\begin{equation*}
\var _{n}(f)\coloneqq \sup \left \{  \left |f(\omega )-f(u)\right |:
\omega ,u\in I^{\mathbb{N}}\text{ and }\omega _{i}=u_{i}\:
\text{for all } i\in \left \{  1,\dots ,n\right \}  \right \}  ,
\end{equation*}
\begin{equation*}
\left |f\right |_{\alpha }\coloneqq \sup _{n\geq 0}
\frac{\var _{n}(f)}{\alpha ^{n}}\text{ and}\,\:\mathcal{F_{\alpha }}
\coloneqq \left \{  f\in \mathcal{C}\left (I^{\mathbb{N}}\right ):
\left |f\right |_{\alpha }<\infty \right \}  .
\end{equation*}
Elements of $\mathcal{F_{\alpha }}$ are called $\alpha $ \emph{-H\"{o}lder
continuous} functions on $I^{\mathbb{N}}$. Furthermore, the
\emph{Birkhoff sum} of $f$ is defined by
$S_{n}f(x)\coloneqq \sum _{k=0}^{n-1}f\circ \sigma ^{k}(x)$,
$x\in I^{\mathbb{N}}$, $n\in \mathbb{N}$ and $S_{0}f=0$.

For $\psi \in \mathcal{C}\left (I^{\mathbb{N}}\right )$ with
$L_{\psi }\mathbbm{1}=\mathbbm{1}$ let
$\nu \in \mathcal{M}_{\sigma }\left (I^{\mathbb{N}}\right )$ denote a fixed
point probability measure of $L_{\psi }^{*}$, that is
$L_{\psi }^{*}\nu =\nu $ where $L_{\psi }^{*}$ denotes the dual operator
of $L_{\psi }$ acting on the set of Borel probability measures supported
on $I^{\mathbb{N}}$. Such a fixed point always exists by Schauder-Tychonov
fixed point theorem (see also \citep{MR1819804}) and the $\sigma $-invariance
of $\nu $ follows for
$E\in \mathcal{B}\left (I^{\mathbb{N}}\right )$, by
\begin{align*}
\nu (\sigma ^{-1}(E)) & =\int \sum _{j\in I}\mathrm{e}^{S_{n}\psi
\left (j y\right )}\mathbbm{1}_{\sigma ^{-1}(E)}(j y)\,\;\mathrm{d}
\nu (y)=\int \sum _{j\in I}\mathrm{e}^{S_{n}\psi \left (j y\right )}
\mathbbm{1}_{E}(y)\,\;\mathrm{d}\nu (y)=\nu (E).
\end{align*}
We call $\nu $ a \emph{weak $\psi $-Gibbs measure} and
$\varrho \coloneqq \nu \circ \pi ^{-1}$ a\emph{ weak $\psi $-Gibbs measure}
\emph{with respect to the IFS} $\Phi $. For $\omega \in I^{*}$, we define
the measure $\varrho _{\omega }$ and $\Lambda _{\omega }$ by
$\;\mathrm{d}\varrho _{\omega }\coloneqq g_{\omega }\;\mathrm{d}\varrho $ with
$g_{\omega }\coloneqq \mathrm{e}^{S_{\left |\omega \right |}\psi
\circ \pi ^{-1}\circ T_{\omega }}$ and
$\;\mathrm{d}\,\Lambda _{\omega }\coloneqq \left |T'_{\omega }\right |\,
\;\mathrm{d}\Lambda |_{[0,1]}$.
\end{defn}
 
\begin{rem}
\label{rem4.2}
The following list of comments proves useful in our context.
\begin{enumerate}
\item $\nu $ is always a weak Gibbs measure in the sense of
\citep[Proposition 1]{MR1819804}, in particular, for all
$u\in I^{\mathbb{N}}$ and $n\in \mathbb{N}$, we have
\begin{equation}
\mathrm{e}^{-\sum _{i=0}^{n-1}\var _{i}(\psi )}\leq
\frac{\nu \left (\left [u|_{n}\right ]\right )}{\mathrm{e}^{S_{n}\psi (u)}}
\leq \mathrm{e}^{\sum _{i=0}^{n-1}\var _{i}(\psi )}.
\label{eq:WeakGibbsInequality}
\end{equation}
\item The measure $\nu $ has no atoms, since
$\sum _{i=0}^{n}\var _{i}(\psi )=o\left (n\right )$ and
$S_{n}\psi \leq n\max \psi $, where $o$ denotes the usual Landau symbol,
i.e.  $a_{n}=o\left (n\right )$ if $a_{n}/n\to 0$ for
$n\to \infty $.
\item The topological support $\supp (\varrho )$ of $\varrho $ is equal
to $K$. To see this, note that $K$ is covered by the sets
$\bigcup _{\omega \in I^{n}}T_{\omega }\left (\left [0,1\right ]
\right )$, $n\in \mathbb{N}$, and by \reftext{(\ref{eq:WeakGibbsInequality})} each
$T_{\omega }\left (\left [0,1\right ]\right )$ has positive
$\varrho $-measure
$\varrho \left (T_{\omega }\left (\left [0,1\right ]\right )\right )
\geq \exp \left (-\sum _{i=0}^{n-1}\var _{i}(\psi )\right )\nu \left (
\left [\omega \right ]\right )$.
\item If $\psi $ is additionally H\"{o}lder continuous, then $\nu $ is
the unique invariant ergodic $\psi $-Gibbs measure and the bounds in the
above inequality \reftext{(\ref{eq:WeakGibbsInequality})} can be chosen to be positive
constants.
\item For an arbitrary H\"{o}lder continuous function
$\psi :I^{\mathbb{N}}\to \mathbb{R}$ (without assuming
$L_{\psi }\mathbbm{1}=\mathbbm{1}$) there always exists a $\sigma $-invariant
$\psi $-Gibbs measure $\nu $ on the symbolic space as a consequence of
the general thermodynamic formalism and the Perron-Frobenius theorem for
H\"{o}lder potentials (see e.g. \citep{MR2423393}). Let
$h$ denote the only eigenfunction of the Perron-Frobenius operator for
the maximal eigenvalue $\lambda >0$, which is positive and in the same
H\"{o}lder class. Then
$\psi _{1}\coloneqq \psi -\log \lambda +\log h-\log h\circ \sigma $ defines
another H\"{o}lder continuous function for which
$L_{\psi _{1}}\mathbbm{1}=\mathbbm{1}$ and for which $\nu $ is the (unique)
$\psi _{1}$-Gibbs measure, as defined here.
\item If $\psi $ depends only on the first coordinate and is normalized
such that $p_{i}\coloneqq \exp \psi \left (i,\ldots \right )$,
$i\in I$, defines a probability vector, then $\nu $ is in fact a Bernoulli
measure and the bounding constants in the above inequalities (\reftext{(\ref{eq:WeakGibbsInequality})})
can be chosen to be $1$. If additionally the $(T_{i})$ are contracting
similarities, then $\varrho $ coincides with the self-similar measure as
defined in \reftext{(\ref{eq:Simi-1})}.
\end{enumerate}
\end{rem}

Let us define the \emph{geometric potential function}
\begin{align*}
\varphi (\omega _{1}\omega _{2}\cdots )\coloneqq \log \left (\left |T'_{
\omega _{1}}(\pi (\omega _{2}\omega _{3}\cdots ))\right |\right ) & .
\end{align*}
We will make use of the following relation between $\varphi $ with
$T_{\omega }'$ with
$\omega =\omega _{1}\cdots \omega _{n}\in I^{n},n\in \mathbb{N}$. For any
$x\in K$ there exists $\alpha _{x}\in I^{\mathbb{N}}$ such that
$\pi (\alpha _{x})=x$. Hence,
\begin{align*}
\left |T_{\omega }'(x)\right | & =\mathrm{e}^{\sum _{i=1}^{|\omega |}
\log \left (\left |T'_{\omega _{i}}\left (T_{\sigma ^{i}(\omega )}(
\pi (\alpha _{x}))\right )\right |\right )}=\mathrm{e}^{\sum _{i=1}^{|
\omega |}\log \left (\left |T'_{\omega _{i}}\left (\sigma ^{i}(
\omega \pi (\alpha _{x}))\right )\right |\right )}=\mathrm{e}^{S_{n}
\varphi (\omega \alpha _{x})}.
\end{align*}
Note that $\varphi $ is H\"{o}lder continuous if the underlying IFS is
a $\mathcal{C}^{1+\gamma }$-IFS. Moreover, if all the $T_{i}$ are affine,
then $\varphi $ depends only on the first coordinate.

The \emph{pressure }of a continuous function
$f:I^{\mathbb{N}}\to \mathbb{R}$ is defined by
\begin{equation*}
P(f)\coloneqq \lim _{n\to \infty }\frac{1}{n}\log \sum _{\omega \in I^{n}}
\exp \left (S_{\omega }f\right ),
\end{equation*}
with
$S_{\omega }f:=\sup _{x\in \left [\omega \right ]}S_{\left |\omega
\right |}f(x)$. Since for
\begin{equation}
p:t\mapsto P(t\xi )
\label{eq:Pressure}
\end{equation}
with $\xi \coloneqq \psi +\varphi $ is continuous, strictly monotonically
increasing and convex and $\lim _{t\to \pm \infty }p(t)=\mp \infty $, there
exists a unique number $z_{\varrho }\in \mathbb{R}$ such that
$p\left (z_{\varrho }\right )=0$. Moreover, we have
$P\left (\psi \right )=0$.

For $m\in \mathbb{N}$ we will consider the accelerated shift-space
$\left (I^{m}\right )^{\mathbb{N}}$ with natural shift map
$\widetilde{\sigma }:\left (I^{m}\right )^{\mathbb{N}}\to \left (I^{m}
\right )^{\mathbb{N}}$. Clearly, $\left (I^{m}\right )^{\mathbb{N}}$ can
be identified with $I^{\mathbb{N}}$ allowing us to define the
\emph{accelerated ergodic sum} for
$f\in \mathcal{C}\left (I^{\mathbb{N}}\right )$ by
\begin{equation*}
\widetilde{S}_{n}f^{m}(x)\coloneqq\sum_{i=0}^{n-1}f^{m}(\widetilde{\sigma}^{i}(x))\mbox{ with }f^{m}(x)\coloneqq\sum_{i=0}^{m-1}f\left(\sigma^{i}(x)\right).
\end{equation*}
For $\omega \in \left (I^{m}\right )^{*}$ we let $|\omega |_{m}$ denote
the word length of $\omega $ with respect to the alphabet $I^{m}$. With
this setup we have $\widetilde{S}_{n}f^{m}=S_{m\cdot n}f$ and
$\widetilde{S}_{\omega }f^{m}=\sup _{x\in \left [\omega \right ]}
\widetilde{S}_{\left |\omega \right |_{m}}f^{m}(x)$ for
$\omega \in \left (I^{m}\right )^{*}$.
\begin{lem} 
\label{lem:PressureFunctionApprox}%
For $f\in \mathcal{C}\left (I^{\mathbb{N}}\right )$ and every
$m\in \mathbb{N}$, we have
\begin{equation*}
mP\left (f\right )=P_{\widetilde{\sigma }}\left (f^{m}\right )
\coloneqq \lim _{n\to \infty }\frac{1}{n}\log \sum _{\omega \in \left (I^{m}
\right )^{n}}\exp \left (\widetilde{S}_{\omega }f^{m}\right ).
\end{equation*}
\end{lem}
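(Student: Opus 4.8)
The plan is to show that the two defining limits — $P(f^m)$ computed over the accelerated system and $mP(f)$ computed over the original system — agree, by comparing the defining partition sums after reindexing words. The key observation is the identity $\widetilde S_\omega f^m=\sup_{x\in[\omega]}\widetilde S_{|\omega|_m}f^m(x)=\sup_{x\in[\omega]}S_{mn}f(x)$ for $\omega\in(I^m)^n$, which was recorded just before the statement. Since words of length $n$ over the alphabet $I^m$ are in canonical bijection with words of length $mn$ over $I$ (concatenation of the $m$-blocks), the sum $\sum_{\omega\in(I^m)^n}\exp(\widetilde S_\omega f^m)$ is exactly $\sum_{u\in I^{mn}}\exp(S_u f)$, where $S_u f=\sup_{x\in[u]}S_{|u|}f(x)$. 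Hence $\frac1n\log\sum_{\omega\in(I^m)^n}\exp(\widetilde S_\omega f^m)=\frac{m}{mn}\log\sum_{u\in I^{mn}}\exp(S_u f)$.

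The remaining point is that the subsequence $\bigl(\tfrac{1}{N}\log\sum_{u\in I^N}\exp(S_uf)\bigr)_N$ along $N=mn$ converges to the same limit $P(f)$ as the full sequence. This is immediate because the full sequence converges (the pressure is well defined as a limit, e.g. by sub-additivity of $N\mapsto\log\sum_{u\in I^N}\exp(S_uf)$ up to a bounded error, or simply by the standing assumption that $P$ exists as a limit), and any subsequence of a convergent sequence has the same limit. Multiplying through by $m$ gives $P_{\widetilde\sigma}(f^m)=mP(f)$.

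I expect the only mild subtlety to be bookkeeping with the two notions of "sup over a cylinder'': one must check that $\sup_{x\in[\omega]}\widetilde S_{|\omega|_m}f^m(x)$, where $[\omega]$ is the cylinder in $(I^m)^{\mathbb N}$, coincides with $\sup_{x\in[u]}S_{|u|}f(x)$ for the corresponding cylinder $[u]$ in $I^{\mathbb N}$ under the identification $(I^m)^{\mathbb N}\cong I^{\mathbb N}$ — but this is precisely the content of the displayed identity $\widetilde S_nf^m=S_{m\cdot n}f$ together with the fact that the two cylinders are literally the same subset of the shift space under the identification. Once that is in place the proof is a one-line reindexing followed by the subsequence remark, so there is no real obstacle; the statement is essentially a normalization lemma that lets us later replace $\sigma$ by an iterate $\widetilde\sigma$ when applying renewal-theoretic arguments.
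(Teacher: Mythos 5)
Your proof is correct and follows the paper's argument almost verbatim: the paper's proof is exactly the reindexing identity $\frac1n\log\sum_{\omega\in(I^m)^n}\exp(\widetilde S_\omega f^m)=m\cdot\frac{1}{mn}\log\sum_{u\in I^{mn}}\exp(S_u f)$, followed by passing to the limit along the subsequence $N=mn$. The only difference is that you spell out the justification for the subsequence step (existence of the pressure limit), which the paper leaves implicit.
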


\begin{proof}
The assertion follows immediately from the identity, for
$n\in \mathbb{N}$,
\begin{align*}
\frac{1}{n}\log \sum _{\omega \in \left (I^{m}\right )^{n}}\exp
\left (\widetilde{S}_{\omega }\widetilde{f}\right ) & =\frac{1}{n}
\log \sum _{\omega \in \left (I^{m}\right )^{n}}\exp \left (\sup _{x
\in \left [\omega \right ]}\widetilde{S}_{\left |\omega \right |_{m}}f^{m}(x)
\right )
\\
& =m\frac{1}{mn}\log \sum _{\omega \in I^{mn}}\exp \left (\sup _{x
\in \left [\omega \right ]}S_{\left |\omega \right |}f(x)\right ).\qedhere
\end{align*}
\end{proof}
In the following we show that the
\emph{weak bounded distortion property (wBDP)} holds true for the IFS
$\Phi =\left (T_{1},\dots ,T_{n}\right )$.
\begin{lem}[Weak Bounded Distortion Property] 
\label{lem:weak_Bounded-distortion_Property}
There exists a sequence of non-negative numbers
$\left (b_{m}\right )_{m\in \mathbb{N}}$ with
$b_{m}=o\left (m\right )$ such that for all $\omega \in I^{*}$ and
$x,y\in [0,1]$
\begin{equation*}
\mathrm{e}^{-b_{|\omega |}}\leq \frac{T_{\omega }'(x)}{T_{\omega }'(y)}
\leq \mathrm{e}^{b_{|\omega |}}.
\end{equation*}
\end{lem}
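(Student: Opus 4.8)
The plan is to reduce the statement to the chain rule together with the uniform continuity of the logarithmic derivatives $\log|T_j'|$. I would start by observing that, since each $T_j'$ is continuous on the compact interval $[0,1]$ and bounded away from zero by condition~(2), it has constant sign; hence every factor in the chain-rule expansion of $T_\omega'$ has constant sign, so $T_\omega'$ never vanishes and $T_\omega'(x)/T_\omega'(y)$ is positive and equals $|T_\omega'(x)|/|T_\omega'(y)|$. It therefore suffices to bound $\bigl|\log|T_\omega'(x)|-\log|T_\omega'(y)|\bigr|$ by a quantity $b_{|\omega|}$ depending only on $|\omega|$.

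Writing $\omega=\omega_1\cdots\omega_m\in I^m$, the chain rule yields
\[
\log|T_\omega'(x)|=\sum_{i=1}^m\log\bigl|T_{\omega_i}'\bigl(T_{\sigma^i(\omega)}(x)\bigr)\bigr|
\]
(with $T_{\sigma^m(\omega)}=T_\varnothing=\mathrm{id}$), in accordance with the displayed formula for $|T_\omega'|$ in Section~\ref{sec:Conformal-iterated-function}; thus $\log|T_\omega'(x)|-\log|T_\omega'(y)|$ breaks into $m$ terms of the shape $\log|T_{\omega_i}'(u)|-\log|T_{\omega_i}'(v)|$ with $u=T_{\sigma^i(\omega)}(x)$ and $v=T_{\sigma^i(\omega)}(y)$. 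Putting $r\coloneqq\max_{j\in I}\sup_{x\in[0,1]}|T_j'(x)|$, which is finite and $<1$ by the uniform contraction condition~(1), the mean value theorem shows that each $T_j$ is $r$-Lipschitz, hence $T_{\sigma^i(\omega)}=T_{\omega_{i+1}}\circ\cdots\circ T_{\omega_m}$ is $r^{m-i}$-Lipschitz, so $|u-v|\leq r^{m-i}|x-y|\leq r^{m-i}$.

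Now let $\eta$ be a common modulus of continuity for $\log|T_1'|,\dots,\log|T_n'|$, say
\[
\eta(t)\coloneqq\max_{j\in I}\,\sup\bigl\{\,\bigl|\log|T_j'(u)|-\log|T_j'(v)|\bigr|\;:\;u,v\in[0,1],\ |u-v|\leq t\,\bigr\};
\]
each $\log|T_j'|$ is real-valued (again by condition~(2)) and uniformly continuous on $[0,1]$, so $\eta$ is non-decreasing, finite, and $\eta(t)\to 0$ as $t\downarrow 0$. Combining the last two observations gives
\[
\bigl|\log|T_\omega'(x)|-\log|T_\omega'(y)|\bigr|\;\leq\;\sum_{i=1}^m\eta\bigl(r^{m-i}\bigr)\;=\;\sum_{k=0}^{m-1}\eta\bigl(r^k\bigr)\;=:\;b_m,
\]
a non-negative quantity depending only on $m=|\omega|$. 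Finally, since $r^k\downarrow 0$ we have $\eta(r^k)\to 0$, whence the Cesàro averages $b_m/m=m^{-1}\sum_{k=0}^{m-1}\eta(r^k)$ converge to $0$, i.e. $b_m=o(m)$, which is the assertion.

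The one genuinely delicate point is this $o(m)$ growth: for a mere $\mathcal{C}^1$-IFS one cannot hope for $(b_m)$ to remain bounded, and the argument relies on nothing stronger than $\eta(r^k)\to 0$ together with the elementary fact that Cesàro means of a null sequence vanish. It is worth remarking that if the maps are $\mathcal{C}^{1+\gamma}$ then $\eta(t)\leq Ct^\gamma$ for some $C>0$, so $\sum_k\eta(r^k)<\infty$ and $(b_m)$ is bounded, recovering the classical bounded distortion property; this is the sole place in the paper where the gap between $\mathcal{C}^1$ and $\mathcal{C}^{1+\gamma}$ regularity intervenes.
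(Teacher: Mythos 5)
Your proposal is correct and follows essentially the same route as the paper: chain-rule decomposition of $\log|T_\omega'|$ into $|\omega|$ summands, the uniform contraction giving $|T_{\sigma^i(\omega)}(x)-T_{\sigma^i(\omega)}(y)|\le r^{|\omega|-i}$, a modulus of continuity for the $\log|T_j'|$ (your $\eta(r^k)$ is the paper's $B_k$), and the Ces\`aro argument yielding $b_m=\sum_{k=0}^{m-1}\eta(r^k)=o(m)$. The extra observations you add—that $T_\omega'$ has constant sign so the ratio is positive, and the concluding remark about $\mathcal{C}^{1+\gamma}$ giving a bounded $(b_m)$—are correct and clarifying, but do not change the argument.
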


\begin{proof}
Here, we follow the arguments in \citep[Lemma 3.4]{MR2927378}. For
$\omega \coloneqq \omega _{1}\cdots \omega _{l}\in I^{*}$, we have for
all $x,y\in [0,1]$,
\begin{align*}
\frac{T_{\omega }'(x)}{T_{\omega }'(y)} & \leq \exp \left (\sum _{k=1}^{l}
\left |\log \left (\left |T'_{\omega _{k}}\left (T_{\sigma ^{k}\omega }(x)
\right )\right |\right )-\log \left (\left |T'_{\omega _{k}}\left (T_{
\sigma ^{k}\omega }(y)\right )\right |\right )\right |\right )
\\
& \leq \exp \left (\sum _{k=1}^{l}
\underbrace{\max _{x,y\in [0,1]}\max _{i=1,\ldots ,n}\left |\log \left (\left |T'_{i}\left (T_{\sigma ^{k}\omega }(x)\right )\right |\right )-\log \left (\left |T'_{i}\left (T_{\sigma ^{k}\omega }(y)\right )\right |\right )\right |}_{
\eqqcolon A_{l-k}}\right ).
\end{align*}
Let $0<R<1$ be a common bound for the contraction ratios of the maps
$T_{1},\dots ,T_{n}$. Then we have
\begin{equation*}
\left |T_{\sigma ^{k}\omega }(x)-T_{\sigma ^{k}\omega }(y)\right |\leq R^{l-k}|x-y|
\leq R^{l-k}.
\end{equation*}
Hence, we conclude
\begin{align*}
A_{l-k} & \leq \max _{a,b\in \left [0,1\right ],\left |a-b\right |
\leq R^{l-k}}\max _{i=1,\ldots ,n}\left |\log \left (\left |T'_{i}
\left (a\right )\right |\right )-\log \left (\left |T'_{i}\left (b
\right )\right |\right )\right |\eqqcolon B_{l-k}
\end{align*}
Using that each $T'_{1},\dots ,T'_{n}$  is bounded away from zero and
continuous, we obtain $B_{k}\rightarrow 0$ for
$k\rightarrow \infty $. With $b_{m}\coloneqq \sum _{k=0}^{m-1}B_{k}$ we
have $\lim _{m}b_{m}/m$ equals $\lim _{k}B_{k}=0$ as a Ces\`{a}ro limit
and the second inequality holds. The first inequality follows by interchanging
the roles of $x$ and $y$.
\end{proof}
 
\section{Spectral dimensions and asymptotics} 
\label{sec:Spectral-asymptotic}

In this last part we give the proofs of all three main theorems.
 
\subsection{Weak Gibbs measures under
the OSC} 
\label{subsec:The-self-conformal-case}

Let $\varrho $ and $\nu $ be defined as in Section~\ref{sec:Conformal-iterated-function}.
In this section we assume the\emph{ open set condition} (OSC) with feasible
open set $\left (0,1\right )$, i.e.
$T_{i}\left (\left (0,1\right )\right )\cap T_{j}\left (\left (0,1
\right )\right )=\varnothing $ for all $i\neq j$, $i,j\in I$. Note that
in this case $\varrho$ has
no atoms. We start with some basic observations.
\begin{lem} 
\label{lem:ScalingProperty}
For fixed $\omega ,\eta \in I^{*}$ we have that $f$ is an eigenfunction
with eigenvalue $\lambda $ of
$\Delta _{\varrho ,\Lambda ,I_{\omega \eta }}$ with
$I_{\omega \eta }\coloneqq T_{\omega \eta }\left (\left [0,1\right ]
\right )$ if and only if $f\circ T_{\omega }$ is an eigenfunction with eigenvalue
$\lambda $ of
$\Delta _{\varrho _{\omega },\Lambda _{\omega },I_{\eta }}$.
\end{lem}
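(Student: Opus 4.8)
The plan is to reduce the statement to the already-established change-of-variables correspondence for generalized Kre\u{\i}n--Feller operators (Lemma~\ref{lem:_MeasureSpaceTrafo}) combined with an affine rescaling of the interval. First I would set up the right diffeomorphism: since $I_{\omega\eta}=T_{\omega\eta}([0,1])=T_\omega(I_\eta)$ and $T_\omega$ restricts to a $\mathcal{C}^1$-diffeomorphism from $I_\eta$ onto $I_{\omega\eta}$, composition with $T_\omega$ should transport eigenfunctions on $I_{\omega\eta}$ to eigenfunctions on $I_\eta$. The key is to identify what happens to the two measures defining the form. On the domain side, for $f\in H^1_{0}(I_{\omega\eta})$ one computes via the substitution rule $\int_{I_{\omega\eta}}\nabla_\Lambda f\,\nabla_\Lambda g\,\d\Lambda$ and, using $(\,f\circ T_\omega)' = (f'\circ T_\omega)\cdot T_\omega'$ together with $\d\Lambda|_{I_{\omega\eta}} = |T_\omega'|\,\d(\Lambda\circ T_\omega^{-1})$, one sees the Dirichlet form transforms into $\int_{I_\eta}\nabla_{\Lambda_\omega}(f\circ T_\omega)\,\nabla_{\Lambda_\omega}(g\circ T_\omega)\,\d\Lambda_\omega$ where $\d\Lambda_\omega = |T_\omega'|\,\d\Lambda$ is exactly the measure from Definition~\ref{def:Gibbs_measure}. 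On the inner-product side, $\langle f,g\rangle_{\varrho|_{I_{\omega\eta}}} = \int_{I_{\omega\eta}} fg\,\d\varrho = \int_{I_\eta}(f\circ T_\omega)(g\circ T_\omega)\,\d(\varrho\circ T_\omega)$, and one checks $\varrho\circ T_\omega$, restricted to $I_\eta$, equals $\varrho_\omega$ up to the density $g_\omega = \e^{S_{|\omega|}\psi\circ\pi^{-1}\circ T_\omega}$; this uses the cocycle/Gibbs structure of $\varrho = \nu\circ\pi^{-1}$, namely that pushing $\varrho$ forward along $T_\omega^{-1}$ reweights cylinder masses by the Birkhoff sum $S_{|\omega|}\psi$, which is precisely how $\varrho_\omega$ is defined. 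So $\d(\varrho\circ T_\omega)|_{I_\eta} = g_\omega\,\d\varrho = \d\varrho_\omega$ (at least after normalization, which does not affect eigenfunctions by the remark in Section~\ref{sec3}).

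Once these two transport identities are in place, the eigenvalue equation $\mathcal{E}_{\varrho,\Lambda,I_{\omega\eta}}(f,g)=\lambda\langle f,g\rangle_\varrho$ for all $g\in\dom(\mathcal{E}_{\varrho,\Lambda,I_{\omega\eta}})$ becomes, after substituting $h=g\circ T_\omega$ (which ranges over all of $\dom(\mathcal{E}_{\varrho_\omega,\Lambda_\omega,I_\eta})$ because $g\mapsto g\circ T_\omega$ is a bijection of the domains — this needs checking that the affine-linear-in-$F_\mu$ condition is preserved, which follows since $T_\omega$ maps components of $I_\eta\setminus\supp(\varrho_\omega)$ to components of $I_{\omega\eta}\setminus\supp(\varrho)$), exactly the eigenvalue equation $\mathcal{E}_{\varrho_\omega,\Lambda_\omega,I_\eta}(f\circ T_\omega,h)=\lambda\langle f\circ T_\omega,h\rangle_{\varrho_\omega}$. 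The converse direction is identical running the substitution backwards with $T_\omega^{-1}$.

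The main obstacle I anticipate is the bookkeeping around the density $g_\omega$ and the measure $\varrho_\omega$: one must verify carefully that pushing the self-conformal measure $\varrho$ forward by $T_\omega^{-1}$ really produces $g_\omega\,\d\varrho$ on $I_\eta$, which hinges on the relation $\nu([\omega u|_k]) \asymp \e^{S_{|\omega|}\psi(\omega u)}\,\nu([u|_k])$ coming from \eqref{eq:WeakGibbsInequality}, and on the compatibility of the coding $\pi$ with the branch $T_\omega$ (i.e. $\pi(\omega x) = T_\omega(\pi(x))$). Under the OSC, the cylinder sets $T_\omega([0,1])$ overlap only in $\varrho$-null boundary sets, so the restricted measures are well-defined and additive, which makes this identification clean. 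A secondary, more routine point is confirming that the affine-linearity-in-$F_\mu$ side condition in the definition of $\dom(\mathcal{E}_{\varrho,\mu})$ transfers correctly under composition with $T_\omega$, and that $0=f|_{\partial I_{\omega\eta}}$ corresponds to $0=(f\circ T_\omega)|_{\partial I_\eta}$; both are immediate from $T_\omega$ being a homeomorphism of the relevant intervals. I would present the argument by spelling out the two transport identities as displays, then a one-line substitution to conclude, citing Lemma~\ref{lem:_MeasureSpaceTrafo} for the template and Remark~\ref{rem4.2} for the Gibbs-measure reweighting.
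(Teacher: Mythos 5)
Your overall strategy is structurally the same as the paper's: transform the Dirichlet form by the chain rule under $T_\omega$, transform the $L^2$-inner product by transporting the measure, and check that the test-function domains correspond bijectively. The Dirichlet-form computation you sketch is exactly what the paper does, and your attention to the preservation of the affine-linear-in-$F_\mu$ side condition is a legitimate care-point that the paper's proof leaves implicit.

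However, there is a genuine gap in how you plan to verify the measure-transport identity $\d(\varrho\circ T_\omega)|_{I_\eta}=g_\omega\,\d\varrho$. You say this "hinges on the relation $\nu([\omega u|_k]) \asymp \mathrm{e}^{S_{|\omega|}\psi(\omega u)}\,\nu([u|_k])$ coming from \eqref{eq:WeakGibbsInequality}," and you add a parenthetical about normalization. This will not work: the weak Gibbs inequality \eqref{eq:WeakGibbsInequality} only gives two-sided multiplicative bounds with constants $\mathrm{e}^{\pm\sum_{i<n}\var_i(\psi)}$ that may diverge with $n$; it can never produce the \emph{exact} density identity you need, and no single normalization constant can repair two-sided bounds. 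Moreover, the lemma asserts that the eigenvalue is \emph{exactly} $\lambda$, so an asymptotic or constant-up-to-normalization version of the transport identity is not enough. The paper instead uses the exact conformality $L_\psi^*\nu=\nu$: writing $\int_{I_{\omega\eta}}f^2\,\d\varrho=\int_{[\omega\eta]}f^2\circ\pi\,\d\nu$ (valid under OSC), one applies $\int\,\cdot\,\d\nu=\int L_\psi^{|\omega|}(\,\cdot\,)\,\d\nu$ and observes that $L_\psi^{|\omega|}\bigl(\mathbbm{1}_{[\omega\eta]}\,f^2\circ\pi\bigr)(x)=\mathrm{e}^{S_{|\omega|}\psi(\omega x)}\mathbbm{1}_{[\eta]}(x)\,f^2(\pi(\omega x))$, which — using $\pi(\omega x)=T_\omega(\pi(x))$ — is precisely $\mathbbm{1}_{[\eta]}\cdot\bigl((f\circ T_\omega)^2 g_\omega\bigr)\circ\pi$. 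This yields the exact identity $\int_{I_{\omega\eta}}f^2\,\d\varrho=\int_{I_\eta}(f\circ T_\omega)^2\,\d\varrho_\omega$ directly, with no constants and no appeal to \eqref{eq:WeakGibbsInequality}. Replacing your Gibbs-inequality step by this Perron--Frobenius computation closes the gap; the rest of your plan then goes through.
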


\begin{proof}
Clearly, by a change of variables
\begin{align*}
\int _{I_{\omega \eta }}(\nabla _{\Lambda |_{I_{\omega \eta }}}f)^{2}\,
\;\mathrm{d}\Lambda & =\int _{I_{\eta }}\left (\left (\nabla _{\Lambda |_{I_{
\omega \eta }}}f\right )\circ T_{\omega }\right ){}^{2}|T_{\omega }'|\,
\;\mathrm{d}\Lambda
\\
& =\int _{I_{\eta }}\nabla _{\Lambda |_{I_{\eta }}}\left (f\circ T_{
\omega }\right )^{2}1/|T_{\omega }'|\,\;\mathrm{d}\Lambda =\int _{I_{
\eta }}\left (\nabla _{\Lambda _{\omega }|_{I_{\eta }}}(f\circ T_{\omega })
\right )^{2}|T_{\omega }'|\,\;\mathrm{d}\Lambda ,
\end{align*}
where we used
$\left (\nabla _{\Lambda |_{I_{\omega \eta }}}f\right )\circ T_{\omega }|T_{
\omega }'|=\nabla _{\Lambda |_{I_{\eta }}}\left (f\circ T_{\omega }
\right )$ and
$\nabla _{\Lambda |_{I_{\eta }}}\left (f\circ T_{\omega }\right )=
\nabla _{\Lambda _{\omega }|_{I_{\eta }}}(f\circ T_{\omega })|T_{\omega }'|$.
For the right hand side of the defining equality of the eigenfunction we
have
\begin{align*}
\int _{I_{\omega \eta }}f^{2}\;\mathrm{d}\varrho = & \int _{[\omega
\eta ]}f^{2}\circ \pi \,\;\mathrm{d}\nu =\int L_{\psi }^{|\omega |}(
\mathbbm{1}_{[\omega \eta ]}(x)f^{2}(\pi (x)))\,\;\mathrm{d}\nu (x)
\\*
& =\int \sum _{j\in I^{|\omega |}}\mathrm{e}^{S_{|\omega |}\psi (j x)}
\mathbbm{1}_{[\omega \eta ]}(j x)f^{2}(\pi (j x))\,\;\mathrm{d}\nu (x)
\\
& =\int \mathrm{e}^{S_{|\omega |}\psi (\omega x)}\mathbbm{1}_{[\eta ]}(x)f^{2}(
\pi (\omega x))\,\;\mathrm{d}\nu (x)=\int _{I_{\eta }}(f\circ T_{\omega })^{2}
\mathrm{e}^{S_{|\omega |}\psi \circ \pi ^{-1}\circ T_{\omega }}\,\;
\mathrm{d}\varrho ,
\end{align*}
where we used the fact that $\pi (\omega x)=T_{\omega }(\pi (x))$.
\end{proof}
Set
$S_{\omega ,\eta }f\coloneqq \sup _{x\in [\omega \eta ]}S_{|\omega |}f(x)$
and
$s_{\omega ,\eta }f\coloneqq \inf _{x\in [\omega \eta ]}S_{|
\omega |}f(x)$. If $\eta $ is the empty word, then
$S_{\omega }f=S_{\omega ,\varnothing }f$ as defined above and we set
$s_{\omega }f\coloneqq s_{\omega ,\varnothing }f$.
\begin{lem}
\label{lem:i-th_Eigenvalue}
For all $i\in \mathbb{N}$ and $\omega ,\eta \in I^{*}$, we have
\begin{equation*}
\frac{\lambda _{\varrho ,\Lambda ,I_{\eta }}^{i}}{\mathrm{e}^{S_{\omega ,\eta }\varphi +b_{|\omega |}+S_{\omega ,\eta }\psi }}
\leq \lambda _{\varrho ,\Lambda ,I_{\omega \eta }}^{i}=\lambda _{
\varrho _{\omega },\Lambda _{\omega },I_{\eta }}^{i}\leq
\frac{\lambda _{\varrho ,\Lambda ,I_{\eta }}^{i}}{\mathrm{e}^{s_{\omega ,\eta }\varphi -b_{|\omega |}+s_{\omega ,\eta }\psi }}
\end{equation*}
where $(b_{m})_{m\in \mathbb{N}}$ is the sequence defined in \reftext{Lemma~\ref{lem:weak_Bounded-distortion_Property}} with
$b_{n}=o\left (n\right )$.
\end{lem}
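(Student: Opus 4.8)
The plan is to reduce the claim to a comparison of Rayleigh quotients on the interval $I_{\eta}$. First, \reftext{Lemma~\ref{lem:ScalingProperty}} provides an eigenvalue‑preserving bijection between the eigenfunctions of $\Delta_{\varrho,\Lambda,I_{\omega\eta}}$ and those of $\Delta_{\varrho_{\omega},\Lambda_{\omega},I_{\eta}}$; hence these two operators have the same spectrum counted with multiplicity, and in particular $\lambda_{\varrho,\Lambda,I_{\omega\eta}}^{i}=\lambda_{\varrho_{\omega},\Lambda_{\omega},I_{\eta}}^{i}$ for every $i\in\mathbb{N}$, which is the middle equality. It then remains to bound $\lambda_{\varrho_{\omega},\Lambda_{\omega},I_{\eta}}^{i}$ from above and below in terms of $\lambda_{\varrho,\Lambda,I_{\eta}}^{i}$, and for this I would invoke the Courant--Fischer min--max characterization in the form of \reftext{Lemma~\ref{lem:dom_vs_H01_Minmax}}.

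The two‑sided density estimates needed for this are the following. On $I_{\eta}$ we have $\mathrm{d}\Lambda_{\omega}=|T_{\omega}'|\,\mathrm{d}\Lambda$, and for a point of the form $x=\pi(\eta\beta)\in I_{\eta}$ the relation $|T_{\omega}'(x)|=\mathrm{e}^{S_{|\omega|}\varphi(\omega\eta\beta)}$ together with $\omega\eta\beta\in[\omega\eta]$ gives $\mathrm{e}^{s_{\omega,\eta}\varphi}\leq|T_{\omega}'(x)|\leq\mathrm{e}^{S_{\omega,\eta}\varphi}$; the weak bounded distortion property (\reftext{Lemma~\ref{lem:weak_Bounded-distortion_Property}}) then extends this to all of $I_{\eta}$ at the cost of a factor $\mathrm{e}^{\pm b_{|\omega|}}$, so that $\mathrm{e}^{s_{\omega,\eta}\varphi-b_{|\omega|}}\leq|T_{\omega}'|\leq\mathrm{e}^{S_{\omega,\eta}\varphi+b_{|\omega|}}$ on $I_{\eta}$. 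Likewise $\mathrm{d}\varrho_{\omega}=g_{\omega}\,\mathrm{d}\varrho$ with $g_{\omega}=\mathrm{e}^{S_{|\omega|}\psi\circ\pi^{-1}\circ T_{\omega}}$, and since under the OSC $\varrho$ is carried by $K$ and $K\cap I_{\eta}=T_{\eta}(K)=\pi([\eta])$ up to the $\varrho$‑null set of endpoints of the intervals $T_{j}([0,1])$, for $\varrho$‑a.e.\ $x\in I_{\eta}$ one has $g_{\omega}(x)=\mathrm{e}^{S_{|\omega|}\psi(\omega\eta\beta)}$ for a suitable $\beta\in I^{\mathbb{N}}$, whence $\mathrm{e}^{s_{\omega,\eta}\psi}\leq g_{\omega}\leq\mathrm{e}^{S_{\omega,\eta}\psi}$ $\varrho$‑almost everywhere on $I_{\eta}$.

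Finally I would assemble these estimates. Since $|T_{\omega}'|$ is bounded away from $0$ and $\infty$, the measures $\Lambda_{\omega}|_{I_{\eta}}$ and $\Lambda|_{I_{\eta}}$ are mutually absolutely continuous with bounded densities, so $H_{0,\Lambda_{\omega}}^{1}(I_{\eta})$ and $H_{0,\Lambda}^{1}(I_{\eta})$ agree as vector spaces, with $\nabla_{\Lambda_{\omega}}f=\nabla_{\Lambda}f/|T_{\omega}'|$; consequently the infima in \reftext{Lemma~\ref{lem:dom_vs_H01_Minmax}} that define $\lambda_{\varrho_{\omega},\Lambda_{\omega},I_{\eta}}^{i}$ and $\lambda_{\varrho,\Lambda,I_{\eta}}^{i}$ run over the same family of $i$‑dimensional subspaces. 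For a fixed $f$ in such a subspace one computes $\mathcal{E}_{\varrho_{\omega},\Lambda_{\omega},I_{\eta}}(f,f)=\int_{I_{\eta}}(\nabla_{\Lambda}f)^{2}\,|T_{\omega}'|^{-1}\,\mathrm{d}\Lambda$ and $\langle f,f\rangle_{\varrho_{\omega}}=\int_{I_{\eta}}f^{2}g_{\omega}\,\mathrm{d}\varrho$, so the density bounds above yield
\[
\mathrm{e}^{-S_{\omega,\eta}\varphi-b_{|\omega|}-S_{\omega,\eta}\psi}\,\frac{\mathcal{E}_{\varrho,\Lambda,I_{\eta}}(f,f)}{\langle f,f\rangle_{\varrho}}\ \leq\ \frac{\mathcal{E}_{\varrho_{\omega},\Lambda_{\omega},I_{\eta}}(f,f)}{\langle f,f\rangle_{\varrho_{\omega}}}\ \leq\ \mathrm{e}^{-s_{\omega,\eta}\varphi+b_{|\omega|}-s_{\omega,\eta}\psi}\,\frac{\mathcal{E}_{\varrho,\Lambda,I_{\eta}}(f,f)}{\langle f,f\rangle_{\varrho}}.
\]
Taking the supremum over $f$ in an $i$‑dimensional subspace and then the infimum over such subspaces converts this into the asserted chain of inequalities for $\lambda_{\varrho_{\omega},\Lambda_{\omega},I_{\eta}}^{i}=\lambda_{\varrho,\Lambda,I_{\omega\eta}}^{i}$. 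The only step requiring a little care is the identification of the two Sobolev spaces, which is what makes the two min--max formulas directly comparable; the rest is bookkeeping with the Birkhoff sums of $\varphi$ and $\psi$ over the cylinder $[\omega\eta]$ and with the distortion sequence $(b_{m})_{m\in\mathbb{N}}$ from \reftext{Lemma~\ref{lem:weak_Bounded-distortion_Property}}.
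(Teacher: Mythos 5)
Your proposal is correct and follows essentially the same route as the paper's proof: both reduce the problem via \reftext{Lemma~\ref{lem:ScalingProperty}} to comparing Rayleigh quotients on $I_{\eta}$, both bound the densities $|T_{\omega}'|^{-1}$ and $g_{\omega}$ on $I_{\eta}$ using the coding relation $|T_{\omega}'(x)|=\mathrm{e}^{S_{|\omega|}\varphi(\omega\alpha_x)}$ together with the weak bounded distortion property, and both pass to the eigenvalue inequalities through the min--max characterization of \reftext{Lemma~\ref{lem:dom_vs_H01_Minmax}} after identifying $H^1_{0,\Lambda_\omega}(I_\eta)$ with $H^1_0(I_\eta)$. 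The only difference is cosmetic: the paper states the two-sided bound compactly as a pinching of the ratio of the two Rayleigh quotients, whereas you spell out the pointwise density bounds separately; the logical content is the same.
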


\begin{proof}
Note that the equality is a direct consequence of \reftext{Lemma~\ref{lem:ScalingProperty}}. For every $f\in H_{0}^{1}(I_{\eta })$ we have
\begin{equation*}
\frac{\int _{I_{\eta }}(\nabla _{\Lambda _{\omega }|_{I_{\eta }}}f)^{2}\;\mathrm{d}\Lambda _{\omega }}{\int _{I_{\eta }}f^{2}\;\mathrm{d}\varrho _{\omega }}=
\frac{\int _{I_{\eta }}(\nabla _{\Lambda |_{I_{\eta }}}f)^{2}\left |T_{\omega }'\right |^{-1}\;\mathrm{d}\Lambda }{\int _{I_{\eta }}f^{2}\mathrm{e}^{S_{|\omega |}\psi \circ \pi ^{-1}\circ T_{\omega }}\;\mathrm{d}\varrho }=
\frac{\int _{I_{\eta }}(\nabla _{\Lambda |_{I_{\eta }}}f)^{2}\left |T_{\omega }'\right |^{-1}\;\mathrm{d}\Lambda }{\int _{I_{\eta }}f^{2}\mathrm{e}^{S_{|\omega |}\psi \circ \pi ^{-1}\circ T_{\omega }}\;\mathrm{d}\varrho }
\end{equation*}
and hence using the wBDP stated \reftext{Lemma~\ref{lem:weak_Bounded-distortion_Property}} gives
\begin{align*}
\frac{1}{\mathrm{e}^{S_{\omega ,\eta }\varphi +b_{|\omega |}+S_{\omega ,\eta }\psi }}
& \leq
\frac{\int _{I_{\eta }}f^{2}\,\;\mathrm{d}\varrho }{\int _{I_{\eta }}(\nabla _{\Lambda |_{I_{\eta }}}f)^{2}\,\;\mathrm{d}\Lambda }
\frac{\int _{I_{\eta }}(\nabla _{\Lambda _{\omega }|_{I_{\eta }}}f)^{2}\;\mathrm{d}\Lambda _{\omega }}{\int _{I_{\eta }}f^{2}\;\mathrm{d}\varrho _{\omega }}
\leq
\frac{1}{\mathrm{e}^{s_{\omega ,\eta }\varphi -b_{|\omega |}+s_{\omega ,\eta }\psi }}.
\end{align*}
Using the fact that
$H_{0}^{1}\left (I_{\eta }\right )=H_{\Lambda _{\omega },0}^{1}\left (I_{
\eta }\right )$ and
$\nabla _{\Lambda _{\omega }|_{I_{\eta }}}f=\nabla _{\Lambda |_{I_{\eta }}}f/
\left |T_{\omega }'\right |$, the claim follows as a consequence of \reftext{Lemma~\ref{lem:dom_vs_H01_Minmax}}.
\end{proof}
\begin{cor} 
\label{cor:Approx}%
For $m\in \mathbb{N}$ large enough, for all
$x>\lambda _{\varrho ,\Lambda }^{1}/r_{m,\min }$, we have
\begin{equation*}
\left (\frac{xr_{m,\min }}{\lambda _{\varrho ,\Lambda }^{1}}\right )^{
\underline{u}_{m}}\leq N_{\varrho ,\Lambda }(x)\leq 2
\frac{x^{\overline{u}_{m}}}{\left (\lambda _{\varrho ,\Lambda }^{1}R_{m,\min }\right )^{\overline{u}_{m}}}+1
\end{equation*}
\textup{where, for} $\omega \in I^{m}$, set $r_{\omega }
\coloneqq \exp (s_{\omega }\varphi -b_{m}+s_{\omega }\psi )$,
$R_{\omega }\coloneqq \exp (S_{\omega }\varphi +b_{m}+S_{\omega }\psi )$,
$r_{m,\min }\coloneqq \min _{i\in I^{m}}r_{i}$,
$R_{m,\min }\coloneqq \min _{i\in I^{m}}R_{i}$ and let
$\underline{u}_{m},\overline{u}_{m}\in \mathbb{R}_{>0}$ be the unique solutions
of
\begin{equation*}
\sum _{\omega \in I^{m}}\mathrm{e}^{\overline{u}_{m}(S_{\omega }
\varphi +S_{\omega }\psi +b_{m})}=\sum _{\omega \in I^{m}}\mathrm{e}^{
\underline{u}_{m}(s_{\omega }\varphi +s_{\omega }\psi -b_{m})}=1.
\end{equation*}
\end{cor}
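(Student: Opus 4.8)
The corollary is a self-similar-type estimate for the eigenvalue counting function obtained by iterating the IFS $m$ times and applying Dirichlet–Neumann bracketing (Theorem~\ref{thm:CompareCountingfunctions}) together with the scaling relation from Lemma~\ref{lem:i-th_Eigenvalue}. The plan is to use the covering of $[0,1]$ by the level-$m$ cylinder intervals $\{I_\omega : \omega \in I^m\}$, whose interiors are disjoint under the OSC, as a subdivision vector. Theorem~\ref{thm:CompareCountingfunctions} then gives, for all $x \geq 0$,
\begin{equation*}
\sum_{\omega \in I^m} N_{\varrho,\Lambda,I_\omega}(x) \leq N_{\varrho,\Lambda}(x) \leq \sum_{\omega \in I^m} N_{\varrho,\Lambda,I_\omega}(x) + (n^m - 1).
\end{equation*}
(One must first check that the endpoints of the $I_\omega$ carry no $\varrho$-mass, which holds since $\varrho$ is atomless under the OSC, cf.\ Remark~\ref{rem4.2}.)

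\medskip\noindent
The next step is to relate $N_{\varrho,\Lambda,I_\omega}$ to $N_{\varrho,\Lambda}=N_{\varrho,\Lambda,[0,1]}$ via the eigenvalue comparison. By Lemma~\ref{lem:i-th_Eigenvalue} applied with $\eta = \varnothing$ (so $I_\eta = [0,1]$), the $i$-th eigenvalue on $I_\omega$ satisfies
\begin{equation*}
\frac{\lambda_{\varrho,\Lambda}^i}{R_\omega} \leq \lambda_{\varrho,\Lambda,I_\omega}^i \leq \frac{\lambda_{\varrho,\Lambda}^i}{r_\omega},
\end{equation*}
with $r_\omega, R_\omega$ as in the statement. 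Counting eigenvalues, this translates into
\begin{equation*}
N_{\varrho,\Lambda}(x\, r_\omega) \leq N_{\varrho,\Lambda,I_\omega}(x) \leq N_{\varrho,\Lambda}(x\, R_\omega).
\end{equation*}
Summing over $\omega \in I^m$ and plugging into the bracketing inequality yields a renewal-type inequality for the single function $N \coloneqq N_{\varrho,\Lambda}$:
\begin{equation*}
\sum_{\omega \in I^m} N(x\, r_\omega) \leq N(x) \leq \sum_{\omega \in I^m} N(x\, R_\omega) + n^m.
\end{equation*}

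\medskip\noindent
The final step is to extract the polynomial bounds from this sub-/super-multiplicative inequality. For the lower bound, one uses $N(y) \geq 1$ for $y \geq \lambda_{\varrho,\Lambda}^1$ (there is at least one eigenvalue) to start an induction, and iterates the left inequality: since $\sum_{\omega} r_\omega^{\underline u_m} = 1$ by definition of $\underline u_m$, a comparison function of the form $c\, x^{\underline u_m}$ is a sub-solution, and matching the constant at the threshold $x = \lambda_{\varrho,\Lambda}^1/r_{m,\min}$ gives $N(x) \geq (x r_{m,\min}/\lambda_{\varrho,\Lambda}^1)^{\underline u_m}$. For the upper bound one argues symmetrically: $\sum_\omega R_\omega^{\overline u_m} = 1$ makes $c\, x^{\overline u_m}$ a super-solution up to the additive $n^m$ term, and one absorbs the additive error into the constant (the ``$2$'' and ``$+1$'' in the claim), using $R_\omega \leq R_{m,\min}^{-1}\cdot(\text{something})$—more precisely tracking $x^{\overline u_m}/(\lambda_{\varrho,\Lambda}^1 R_{m,\min})^{\overline u_m}$ as the dominant term. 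The main obstacle I anticipate is the bookkeeping in this last induction: making the additive $n^m$ term harmless requires choosing $m$ large enough that $\overline u_m$ (resp.\ $\underline u_m$) is bounded away from $0$ and $1$ in a controlled way, and then verifying that the geometric decay from iterating the inequality dominates the constant error at every scale; the existence and uniqueness of $\underline u_m, \overline u_m$ as solutions of $\sum_\omega \mathrm e^{u(\cdots)} = 1$ is immediate from monotonicity of $u \mapsto \sum_\omega \mathrm e^{u(\cdots)}$ together with $b_m = o(m)$ ensuring the exponents are eventually negative, so that part is routine.
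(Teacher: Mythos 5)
Your plan diverges from the paper's proof in a way that turns out to matter. Both arguments rest on the same ingredients---Theorem~\ref{thm:CompareCountingfunctions}, Lemma~\ref{lem:i-th_Eigenvalue}, and the defining equations for $\underline{u}_m,\overline{u}_m$---but the paper does not iterate a fixed level-$m$ renewal inequality. Instead it uses, for each $x$, a \emph{stopping-time} (variable-depth Markov) partition of $(I^m)^{\mathbb N}$,
\[
P_{m,x}\coloneqq\bigl\{\omega\in(I^m)^*\colon R_\omega<\lambda/x\le R_{\omega^-}\bigr\},
\qquad
\Xi_{m,x}\coloneqq\bigl\{\omega\in(I^m)^*\colon r_\omega<\lambda/(xr_{m,\min})\le r_{\omega^-}\bigr\},
\]
where $R_\omega$ and $r_\omega$ are the cumulative products along the accelerated cylinder, obtained by iterating Lemma~\ref{lem:i-th_Eigenvalue}. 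The point of the stopping rule is that on every piece $I_\omega$, $\omega\in P_{m,x}$, the first eigenvalue already exceeds $x$, so $N_{\varrho,\Lambda,I_\omega}(x)=0$ and the entire upper bound comes from counting $\card(P_{m,x})$; this is bounded by $x^{\overline u_m}/(\lambda R_{m,\min})^{\overline u_m}$ using the Bernoulli measure $(R_\omega^{\overline u_m})_\omega$ and $\sum_{\omega\in P_{m,x}}R_\omega^{\overline u_m}=1$. Symmetrically each piece in $\Xi_{m,x}$ contributes at least one eigenvalue $\le x$, which gives the lower bound. There is no iteration and hence no accumulating error term.

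Your upper bound, by contrast, has a genuine gap. Iterating the fixed-depth inequality $N(x)\le\sum_{\omega\in I^m}N(xR_\omega)+E_m$ (with $E_m$ of order $n^m$) does not close on a test function $f(x)=Cx^{\overline u_m}+D$: substituting gives $\sum_\omega f(xR_\omega)+E_m = Cx^{\overline u_m}+Dn^m+E_m$, and $Dn^m+E_m\le D$ forces $D<0$, which is incompatible with $N\ge 0$ near the bottom of the range. If instead you unroll the recursion a fixed number $k$ of times and choose $k$ so that all $N(xR_\omega)$ vanish, you must take $k\sim\log(x/\lambda)/(-\log R_{m,\max})$, and the accumulated error $\sum_{j<k}n^{jm}$ is then controlled only via $n^m\le R_{m,\min}^{-\overline u_m}$ together with a factor $\log R_{m,\min}/\log R_{m,\max}\ge 1$ in the exponent---so you obtain an exponent strictly larger than $\overline u_m$ whenever $R_{m,\min}<R_{m,\max}$, not the claimed bound. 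This is precisely the ``bookkeeping'' obstacle you flagged; it is not a bookkeeping issue but a structural one, and resolving it forces you back to a path-dependent stopping rule rather than a fixed depth. Your lower-bound induction, on the other hand, does close (the error term has a favourable sign and $xr_\omega>\lambda$ lets you iterate down to the base interval), but it is carried out more directly and with sharper constants by the paper's $\Xi_{m,x}$-argument.

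One smaller point: the paper's iterated form of Lemma~\ref{lem:i-th_Eigenvalue} over $(I^m)^n$ is needed to define the cumulative $R_\omega,r_\omega$ consistently with the depth-$m$ weights; you apply the lemma only once with $\eta=\varnothing$, which is fine at the first level but must be iterated to reach the stopping scale. Finally, the existence/uniqueness of $\underline u_m,\overline u_m$ is, as you say, routine once $m$ is large enough that $S_\omega\varphi+S_\omega\psi+b_m<0$; the paper notes this too.
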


\begin{proof}
This proof follows the arguments used in \citep[Lemma 2.7]{KL01}. First,
note that for $m\in \mathbb{N}$ sufficiently large for all
$\omega \in I^{m}$ we have
$S_{\omega }\varphi +S_{\omega }\psi +b_{m}<0$ where we used
$b_{m}=o(m)$ and
$S_{\omega }\psi +S_{\omega }\varphi \leq m\left (\max \psi +\max
\varphi \right )$. Therefore there exists
$\overline{u}_{m}\in \mathbb{R}_{>0}$ such that
\begin{equation*}
\sum _{\omega \in I^{m}}R_{\omega }^{\overline{u}_{m}}=1.
\end{equation*}
Moreover, iterating \reftext{Lemma~\ref{lem:i-th_Eigenvalue}} for
$\omega \coloneqq \omega _{1}\cdots \omega _{n}\in \left (I^{m}
\right )^{n}$, $n\in \mathbb{N}$, gives
\begin{equation}
\frac{\lambda _{\varrho ,\Lambda }^{1}}{R_{\omega }}\leq \lambda _{
\varrho ,\Lambda ,I_{\omega }}^{1}\leq
\frac{\lambda _{\varrho ,\Lambda }^{1}}{r_{\omega }}
\label{eq:EigenwerteUngleichung}
\end{equation}
with
$R_{\omega }\coloneqq \prod _{i=1}^{|\omega |_{m}}R_{\omega _{i}}$
and
$r_{\omega }\coloneqq \prod _{i=1}^{|\omega |_{m}}r_{\omega _{i}}$.
Let $x>\lambda _{\varrho , \Lambda }^{1}\eqqcolon \lambda $ be and define
for $m\in \mathbb{N}$ the following partition of
$(I^{m})^{\mathbb{N}}$
\begin{equation*}
P_{m,x}\coloneqq \left \{  \omega \in \left (I^{m}\right )^{*}:R_{
\omega }<\frac{\lambda }{x}\leq R_{\omega ^{-}}\right \}  ,
\end{equation*}
with
$R_{\omega ^{-}}\coloneqq \prod _{i=1}^{|\omega |_{m}-1}R_{\omega _{i}}$. Considering the Bernoulli measure on
$\left (I^{m}\right )^{\mathbb{N}}$ given by the probability vector
$\left (R_{\omega }^{\overline{u}_{m}}\right )$ and using the fact that
$P_{m,x}$ defines a partition of $\left (I^{m}\right )^{\mathbb{N}}$ we
obtain $\sum _{\omega \in P_{m,x}}R_{\omega }^{\overline{u}_{m}}=1$, which
leads to
$\card \left (P_{m.x}\right )\leq x^{\overline{u}_{m}}/\left (
\lambda R_{m,\min }\right )^{\overline{u}_{m}}$. Since for all
$\omega \in P_{m,x}$,
\begin{equation*}
x<\frac{\lambda }{R_{\omega }}\leq \lambda _{\varrho ,\Lambda ,I_{
\omega }}^{1},
\end{equation*}
we conclude from \reftext{Theorem~\ref{thm:CompareCountingfunctions}}
\begin{align*}
N_{\varrho ,\Lambda }(x) & \leq \sum _{\omega \in P _{m,x}}N_{
\varrho ,\Lambda ,I_{\omega }}(x)+2\card \left (P_{m,x}\right )+1=2
\card \left (P_{m,x}\right )+1\\
&\leq 2
\frac{x^{\overline{u}_{m}}}{\left (\lambda R_{m,\min }\right )^{\overline{u}_{m}}}+1.
\end{align*}
For the estimate from below we define for
$x>\frac{\lambda }{r_{m,\min }}$ the following partition of
$(I^{m})^{\mathbb{N}}$
\begin{equation*}
\Xi _{m,x}\coloneqq \left \{  \omega \in \left (I^{m}\right )^{*}:r_{
\omega }<\frac{\lambda }{xr_{m,\min }}\leq r_{\omega ^{-}}\right \}  ,
\end{equation*}
with
$r_{\omega ^{-}}\coloneqq \prod _{i=1}^{|\omega |-1}r_{
\omega _{i}}$. Hence, for all $\omega \in \Xi _{m,x}$, we have by \reftext{(\ref{eq:EigenwerteUngleichung})}
\begin{equation*}
\lambda _{\varrho ,\Lambda ,I_{\omega }}^{1}\leq
\frac{\lambda }{r_{\omega }}\leq
\frac{\lambda }{r_{m,\min }r_{\omega ^{-}}}\leq x.
\end{equation*}
Again, there exists $\overline{u}_{m}\in \mathbb{R}_{>0}$ such that
$\sum _{\omega \in I^{m}}r_{\omega }^{\underline{u}_{m}}=1$ and we obtain
$\sum _{\omega \in \Xi _{m,x}}r_{\omega }^{\underline{u}_{m}}=1$. This implies
\begin{equation*}
1=\sum _{\omega \in \Xi _{m,x}}r_{\omega }^{\underline{u}_{m}}\leq
\left (\frac{\lambda }{xr_{m,\min }}\right )^{\underline{u}_{m}}\card
\left (\Xi _{m,x}\right ),
\end{equation*}
and we conclude from \reftext{Theorem~\ref{thm:CompareCountingfunctions}}
\begin{equation*}
\left (\frac{xr_{m,\min }}{\lambda }\right )^{\underline{u}_{m}}\leq
\card \left (\Xi _{m,x}\right )\leq \sum _{\omega \in \Xi _{m,x}}N_{
\varrho ,\Lambda ,I_{\omega }}(x)\leq N_{\varrho ,\Lambda }(x).\qedhere
\end{equation*}
\end{proof}
In the case of self-similar measures, we obtain the following classical
result of \citep{Fu87}.
\begin{cor} 
\label{cor:LinearCase}%
Assume $0<T_{i}'\equiv \sigma _{i}<1$ and
$\psi (\omega )=\log (p_{\omega _{1}})$, for
$\omega \coloneqq (\omega _{1}\omega _{2}\cdots )\in I^{\mathbb{N}}$, where
$\left (p_{j}\right )_{j}\in \left (0,1\right )^{n}$ is a given probability
vector. Then, for all $i,m\in \mathbb{N}$ and
$\omega =(\omega _{1}\cdots \omega _{m})\in I^{m}$, we have
\begin{equation*}
\lambda _{\varrho ,\Lambda }^{i}=\prod _{j=1}^{m}\sigma _{\omega _{j}}p_{
\omega _{j}}\lambda _{\varrho _{\omega },\Lambda _{\omega }}^{i},
\end{equation*}
and, for all
$x>\lambda _{\varrho ,\Lambda }^{1}\left (\min p_{i}\sigma _{i}\right )^{-1}$,
we have
\begin{equation*}
x^{u}\left (
\frac{\min p_{i}\sigma _{i}}{\lambda _{\varrho ,\Lambda }^{1}}\right )^{u}
\leq N_{\varrho ,\Lambda }(x)\leq
\frac{2x^{u}}{\left (\lambda _{\varrho ,\Lambda }^{1}\min p_{i}\sigma _{i}\right )^{u}}+1,
\end{equation*}
where $u$ is the unique solution of
$\sum {}_{i=1}^{n}(\sigma _{i}p_{i})^{u}=1$.
\end{cor}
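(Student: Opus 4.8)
The plan is to read off both statements from Lemma~\ref{lem:i-th_Eigenvalue} and (the proof of) Corollary~\ref{cor:Approx}, using two features special to the linear situation. First, since every $T_\omega$ is an affine similarity, $T'_\omega$ is constant on $[0,1]$, so the distortion sequence of Lemma~\ref{lem:weak_Bounded-distortion_Property} may be chosen to be $b_m\equiv0$. Second, because $\psi(\omega)=\log p_{\omega_1}$ and $\varphi(\omega)=\log\sigma_{\omega_1}$ depend only on the first coordinate, for $\omega=\omega_1\cdots\omega_m\in I^m$ and any tail $\eta$ one has $S_{\omega,\eta}\varphi=s_{\omega,\eta}\varphi=\sum_{j=1}^m\log\sigma_{\omega_j}$ and $S_{\omega,\eta}\psi=s_{\omega,\eta}\psi=\sum_{j=1}^m\log p_{\omega_j}$. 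Hence the two outer terms in Lemma~\ref{lem:i-th_Eigenvalue} coincide and the two-sided estimate there becomes an equality.

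For the scaling relation I would apply Lemma~\ref{lem:i-th_Eigenvalue} with $\eta=\varnothing$, so that $I_\varnothing=[0,1]$ and $\lambda^i_{\varrho_\omega,\Lambda_\omega,I_\varnothing}=\lambda^i_{\varrho_\omega,\Lambda_\omega}$ (via Lemma~\ref{lem:ScalingProperty}). By the collapse just noted,
\[
\lambda^i_{\varrho,\Lambda}=\mathrm{e}^{\sum_{j=1}^m\log(\sigma_{\omega_j}p_{\omega_j})}\,\lambda^i_{\varrho_\omega,\Lambda_\omega}=\prod_{j=1}^m\sigma_{\omega_j}p_{\omega_j}\cdot\lambda^i_{\varrho_\omega,\Lambda_\omega},
\]
which is the first assertion; taking $i=1$ this is precisely \reftext{(\ref{eq:EigenwerteUngleichung})} with $R_\omega=r_\omega=\prod_j\sigma_{\omega_j}p_{\omega_j}$.

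For the counting estimate I would rerun the proof of Corollary~\ref{cor:Approx}, but now with blocks of length one (the alphabet $I$ itself in place of $I^m$). This is permissible because $\log(\sigma_ip_i)<0$ already, so the exponent $\overline u$ solving $\sum_{i=1}^n(\sigma_ip_i)^{\overline u}=1$ exists, and multiplicativity of $R_\omega=r_\omega=\prod_j\sigma_{\omega_j}p_{\omega_j}$ gives $\sum_{\omega\in I^m}R_\omega^{u}=\bigl(\sum_{i=1}^n(\sigma_ip_i)^u\bigr)^m$, whence $\overline u_m=\underline u_m=u$ for every $m$ and $R_{1,\min}=r_{1,\min}=\min_i\sigma_ip_i$. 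Writing $\lambda:=\lambda^1_{\varrho,\Lambda}$ and, for $x>\lambda/\min_i\sigma_ip_i$, forming the partitions of $I^{\mathbb N}$
\[
P_x:=\{\omega\in I^*: R_\omega<\lambda/x\le R_{\omega^-}\},\qquad \Xi_x:=\{\omega\in I^*: R_\omega<\lambda/(x\min_i\sigma_ip_i)\le R_{\omega^-}\},
\]
testing them against the Bernoulli measure with probability vector $((\sigma_ip_i)^u)_i$ yields $\card(P_x)\le\bigl(x/(\lambda\min_i\sigma_ip_i)\bigr)^u$ and $\card(\Xi_x)\ge\bigl(x\min_i\sigma_ip_i/\lambda\bigr)^u$. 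Since by the scaling relation $\lambda^1_{\varrho,\Lambda,I_\omega}=\lambda/R_\omega>x$ for $\omega\in P_x$ (so $N_{\varrho,\Lambda,I_\omega}(x)=0$) and $\lambda^1_{\varrho,\Lambda,I_\omega}\le x$ for $\omega\in\Xi_x$ (so $N_{\varrho,\Lambda,I_\omega}(x)\ge1$), Theorem~\ref{thm:CompareCountingfunctions} converts these cardinality bounds into the asserted two-sided estimate for $N_{\varrho,\Lambda}(x)$, exactly as in the proof of Corollary~\ref{cor:Approx}.

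I do not expect any genuine obstacle here; the only point that needs care is the bookkeeping that makes the $m$-dependent factor $(\min_i\sigma_ip_i)^m$ of Corollary~\ref{cor:Approx} collapse to the $m$-free quantity $\min_i\sigma_ip_i$, namely that in the linear case the distortion constant vanishes identically and the exponent $u$ is independent of the block length, so one may work directly with $m=1$.
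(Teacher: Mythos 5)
Your proof is correct and follows exactly the route the paper intends: the corollary is stated directly after Corollary~\ref{cor:Approx} as the classical special case, and you correctly identify the two simplifications peculiar to the affine setting — the distortion sequence $b_m$ may be taken $\equiv 0$ so that Lemma~\ref{lem:i-th_Eigenvalue} with $\eta=\varnothing$ collapses to the exact scaling identity, and since $\varphi,\psi$ depend only on the first coordinate one has $R_\omega=r_\omega=\prod_j\sigma_{\omega_j}p_{\omega_j}$ and $\overline u_m=\underline u_m=u$ for every $m$, allowing the partition argument of Corollary~\ref{cor:Approx} to be run with $m=1$ so that $R_{1,\min}=r_{1,\min}=\min_i\sigma_ip_i$. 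This is the paper's implicit argument with all the bookkeeping spelled out; no gaps.
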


The following lemma is elementary and we give its short proof for completeness.
\begin{lem} 
\label{lem:_ConvergenceZeros}%
For $a,b\in \mathbb{R}$ with $a<b$, let
$(f_{n}:\left [a,b\right ]\to \mathbb{R})_{n\in \mathbb{N}}$ be a sequence
of decreasing functions converging pointwise to a function $f$. We assume
that $f_{n}$ has a unique zero in $x_{n}$, for $n\in \mathbb{N}$ and
$f$ has a unique zero in $x$. Then $x=\lim _{n\to \infty }x_{n}$.
\end{lem}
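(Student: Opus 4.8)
The plan is to reduce everything to the obvious sign pattern of a monotone function with a unique zero, and then to run a two–sided squeeze using only pointwise convergence at two well-chosen points near $x$.

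\textbf{Step 1 (sign patterns).} First observe that $f$, being a pointwise limit of non-increasing functions, is itself non-increasing. Next I claim that for a non-increasing function $g\colon[a,b]\to\mathbb{R}$ with a \emph{unique} zero $y$ one has $g(t)>0$ for $t\in[a,y)$ and $g(t)<0$ for $t\in(y,b]$. Indeed, monotonicity gives $g(t)\ge g(y)=0$ for $t\le y$, and if $g(t)=0$ for some $t<y$ this would contradict uniqueness of the zero; the statement on $(y,b]$ is symmetric. Applying this to each $f_n$ (zero $x_n$) and to $f$ (zero $x$), we record the implications: for every $t\in[a,b]$, $\;f_n(t)>0\Rightarrow t<x_n$ and $f_n(t)<0\Rightarrow t>x_n$, and likewise $f(t)>0\Rightarrow t<x$, $f(t)<0\Rightarrow t>x$.

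\textbf{Step 2 (interior case).} Assume first $a<x<b$ and fix $\varepsilon>0$ small enough that $[x-\varepsilon,x+\varepsilon]\subset[a,b]$. By Step 1, $f(x-\varepsilon)>0$ and $f(x+\varepsilon)<0$. Since $f_n(x-\varepsilon)\to f(x-\varepsilon)$ and $f_n(x+\varepsilon)\to f(x+\varepsilon)$, there is $N\in\mathbb{N}$ with $f_n(x-\varepsilon)>0$ and $f_n(x+\varepsilon)<0$ for all $n\ge N$. Invoking Step 1 once more, this forces $x-\varepsilon<x_n<x+\varepsilon$, i.e.\ $|x_n-x|<\varepsilon$, for all $n\ge N$. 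As $\varepsilon>0$ was arbitrary, $x_n\to x$.

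\textbf{Step 3 (boundary cases).} If $x=a$, then for every small $\varepsilon>0$ we have $f(a+\varepsilon)<0$, hence $f_n(a+\varepsilon)<0$ for all large $n$, which by Step 1 gives $x_n<a+\varepsilon$; combined with the trivial bound $x_n\ge a$ this yields $x_n\to a$. The case $x=b$ is entirely symmetric, using $f(b-\varepsilon)>0$ and $x_n\le b$.

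There is no real obstacle here beyond bookkeeping: the only point requiring a line of justification is the passage from ``non-increasing with a unique zero'' to the \emph{strict} sign pattern in Step 1 (this is where uniqueness of the zeros of the $f_n$ and of $f$ is genuinely used), and the only case distinction needed is the treatment of the endpoints so that the argument never evaluates functions outside $[a,b]$.
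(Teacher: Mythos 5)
Your proof is correct, and it takes a genuinely different route from the paper. The paper argues by contradiction: assuming $x_{n}\not\to x$, it uses compactness of $[a,b]$ to extract a subsequence $x_{n_{k}}\to x^{*}\neq x$, and then shows that the monotonicity of $f_{n_{k}}$ together with $f_{n_{k}}(x_{n_{k}})=0$ and $f_{n_{k}}(x)\to f(x)=0$ would force the limit $f$ to vanish on an entire open subinterval between $x^{*}$ and $x$, contradicting uniqueness of the zero of $f$. Your argument is direct: you isolate the key observation that a non-increasing function with a unique zero is strictly positive to the left and strictly negative to the right of it, apply it to $f$ at $x\pm\varepsilon$, transfer the signs to $f_{n}$ for $n$ large via pointwise convergence at just two points, and then apply the same observation to $f_{n}$ to squeeze $x_{n}$ into $(x-\varepsilon,x+\varepsilon)$. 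What your version buys is a cleaner, quantifier-explicit $\varepsilon$-$N$ argument that avoids the subsequence extraction and the contradiction, and it makes visible that the unique-zero hypothesis is used twice, once on the limit and once on each $f_{n}$. The modest cost is the explicit case split for $x\in\{a,b\}$, which the paper's contradiction setup handles silently because the interval $((x^{*}+x)/2,x)$ (or its mirror image) automatically stays inside $[a,b]$. Both proofs only require \emph{decreasing} in the weak sense of non-increasing; the strict inequality the paper writes in its displayed chain is really a consequence of the unique-zero hypothesis rather than an additional assumption, and in any case only the non-strict version is needed for its conclusion.
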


\begin{proof}
Assume that $\lim _{n}x_{n}\neq x$. Then there exists a subsequence
$n_{k}$ such that $x_{n_{k}}\to x^{*}\neq x$ and for all
$k\in \mathbb{N}$ we have
$\left |x-x^{*}\right |/2<\left |x_{n_{k}}-x\right |$ and
$\left |x_{n_{k}}-x^{*}\right |<\left |x-x^{*}\right |/2$. Without loss
of generality we assume $x^{*}<x$. Then $x_{n_{k}}\leq (x^{*}+x)/2$ and
for each $y\in \left ((x^{*}+x)/2,x\right )$, we have
\begin{equation*}
0=f_{n_{k}}(x_{n_{k}})>f_{n_{k}}(y)\geq f_{n_{k}}(x)\to f(x)=0,\:
\text{for }k\to \infty .
\end{equation*}
Consequently, $f(y)=0$ for all $y\in \left ((x^{*}+x)/2,x\right )$, contradicting
the uniqueness of the zero of $f$.
\end{proof}
\begin{lem} 
\label{lem:Approx-1}%
For fixed $m\in \mathbb{N}$ large enough and
$\underline{u}_{m},\overline{u}_{m}\in \mathbb{R}_{>0}$ denoting the unique
solutions of
\begin{equation*}
\sum _{\omega \in I^{m}}\mathrm{e}^{\underline{u}_{m}(S_{\omega }
\varphi +b_{|\omega |}+S_{\omega }\psi )}=\sum _{\omega \in I^{m}}
\mathrm{e}^{\overline{u}_{m}(s_{\omega }\varphi -b_{|\omega |}+s_{
\omega }\psi )}=1,
\end{equation*}
then we have
$\lim _{m\rightarrow \infty }\overline{u}_{m}=\lim _{m\rightarrow
\infty }\underline{u}_{m}=z_{\varrho }$.
\end{lem}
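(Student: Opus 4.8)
The plan is to show that both sequences $\left(\overline{u}_m\right)_{m}$ and $\left(\underline{u}_m\right)_{m}$ converge to $z_\varrho$ by sandwiching them between two sequences whose limits can be computed via the pressure function, using the weak bounded distortion property and the subadditivity built into $\xi = \psi + \varphi$. Recall that $z_\varrho$ is the unique zero of $t \mapsto P(t\xi)$, and by Lemma~\ref{lem:PressureFunctionApprox} we have $mP(t\xi) = P_{\widetilde\sigma}((t\xi)^m) = \lim_{n\to\infty}\frac1n\log\sum_{\omega\in(I^m)^n}\exp(t\widetilde S_\omega\xi^m)$ where $\widetilde S_\omega\xi^m = \sup_{x\in[\omega]}S_{|\omega|}\xi(x) = S_\omega\varphi + S_\omega\psi$ in the notation of the corollaries (modulo the distinction between $S_\omega(\varphi+\psi)$ and $S_\omega\varphi + S_\omega\psi$, which differ by at most a $o(m)$ term by uniform continuity of $\varphi,\psi$; I would absorb this into the $b_m$-type error).

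The key observation is this. Fix $m$ and consider the function $g_m^+(t) := \log\sum_{\omega\in I^m}\mathrm{e}^{t(S_\omega\varphi + S_\omega\psi + b_m)}$. This is convex and strictly increasing in $t$, and $\overline u_m$ is defined (in Lemma~\ref{lem:Approx-1}'s display, reading off the left equality) as... wait — I need to match conventions: in Lemma~\ref{lem:Approx-1}, $\underline u_m$ solves $\sum_\omega \mathrm{e}^{\underline u_m(S_\omega\varphi + b_m + S_\omega\psi)}=1$ and $\overline u_m$ solves $\sum_\omega \mathrm{e}^{\overline u_m(s_\omega\varphi - b_m + s_\omega\psi)}=1$. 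So $\underline u_m$ is the zero of $g_m^+$ and $\overline u_m$ is the zero of $g_m^-(t) := \log\sum_{\omega\in I^m}\mathrm{e}^{t(s_\omega\varphi - b_m + s_\omega\psi)}$. First I would note that since $S_\omega f \ge s_\omega f$ and $b_m \ge 0$, we have $g_m^+ \ge g_m^-$ pointwise up to sign bookkeeping; more precisely one checks $\underline u_m \le \overline u_m$ or vice versa, and that both are pinched together because $S_\omega\varphi - s_\omega\varphi \le b_m$ and similarly $S_\omega\psi - s_\omega\psi = o(m)$ (uniform continuity of $\psi$ composed with contracting cylinders, as in Remark~\ref{rem4.2}(2)). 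So it suffices to identify $\lim_m \underline u_m$, say.

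For that I would argue as follows. Divide the defining relation by $m$: $\underline u_m$ is characterized by $\frac1m\log\sum_{\omega\in I^m}\mathrm{e}^{\underline u_m(S_\omega\varphi + S_\omega\psi + b_m)} = 0$. Now the map $t\mapsto h_m(t) := \frac1m\log\sum_{\omega\in I^m}\mathrm{e}^{t(S_\omega\varphi + S_\omega\psi)}$ converges pointwise, as $m\to\infty$, to $P(t\xi) = P(t(\psi+\varphi))$ by the very definition of pressure (with the caveat about $S_\omega\varphi + S_\omega\psi$ versus $S_\omega\xi$ handled by $|S_\omega\xi - S_\omega\varphi - S_\omega\psi| \le \var$-type bounds that are $o(m)$ uniformly in $\omega$); and the extra $\frac{\underline u_m b_m}{m} \to 0$ term is negligible since $b_m = o(m)$ and $\underline u_m$ stays bounded (it lies in a compact interval, because $S_\omega\psi + S_\omega\varphi \le m(\max\psi+\max\varphi) < 0$ for large $m$, giving a lower bound, and $P(t\xi)<0$ for $t<z_\varrho$ forces an upper bound). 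Each $h_m$ is decreasing?—no, increasing; but $t\mapsto P(t\xi)$ is strictly increasing with a unique zero $z_\varrho$, and each $h_m$ (shifted by the $b_m$ term) has unique zero $\underline u_m$. Applying Lemma~\ref{lem:_ConvergenceZeros} (to $-h_m$, which are decreasing, converging to $-P(\cdot\,\xi)$, with unique zeros) yields $\underline u_m \to z_\varrho$. The same argument with $s_\omega$ in place of $S_\omega$ gives $\overline u_m \to z_\varrho$, since $\frac1m s_\omega\varphi$ and $\frac1m S_\omega\varphi$ have the same pointwise pressure limit (again by wBDP, $0 \le S_\omega\varphi - s_\omega\varphi \le b_m = o(m)$).

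The main obstacle I anticipate is \emph{not} the convergence mechanism itself but the careful handling of the error terms: one must verify uniformly in $\omega\in I^m$ that replacing $S_\omega\xi$ by $S_\omega\varphi + S_\omega\psi$, and $S_\omega$ by $s_\omega$, costs only $o(m)$, and that this $o(m)$ is the \emph{same} $o(m)$ for all $\omega$ of length $m$ — this uses the weak bounded distortion property (Lemma~\ref{lem:weak_Bounded-distortion_Property}) for the $\varphi$-part and the summability $\sum_{i=0}^{n-1}\var_i(\psi) = o(n)$ (Remark~\ref{rem4.2}(2)) for the $\psi$-part. Once these uniform estimates are in hand, the three sequences $h_m(t) \pm (\text{error})/m$ all converge pointwise to $P(t\xi)$, and the conclusion follows from Lemma~\ref{lem:_ConvergenceZeros} applied to the negatives of these (eventually monotone) functions. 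A secondary technical point is establishing a priori that $\{\underline u_m\}$ and $\{\overline u_m\}$ are bounded away from $0$ and $\infty$, so that the $o(m)$ terms, after multiplication by $\underline u_m$ or $\overline u_m$ and division by $m$, still vanish; this I would get from the elementary bounds $P(t\xi) \to \mp\infty$ as $t\to\pm\infty$ together with the two-sided estimates $\max\psi + \max\varphi < 0$ (large $m$) and $\min\psi + \min\varphi$ finite.
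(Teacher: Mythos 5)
Your proposal follows essentially the same route as the paper: normalize the defining equations by $1/m$, observe that the resulting log-sum functions converge pointwise to $P(t\xi)$ (controlling the replacement of $S_\omega\xi$ by $S_\omega\varphi+S_\omega\psi$ and of $S_\omega$ by $s_\omega$ via the wBDP and the $o(m)$ variation sums for $\psi$, and using $b_m/m\to0$), and conclude by Lemma~\ref{lem:_ConvergenceZeros}. One small slip worth fixing: once $m$ is large enough that $b_m/m<-\max\psi$, every exponent $S_\omega\varphi+S_\omega\psi+b_m$ is negative, so $h_m$ and the sandwiching maps $\underline P_m,\overline P_m$ are \emph{decreasing} in $t$ (as is $p(t)=P(t\xi)$, despite the apparent misprint ``increasing'' in the text preceding \reftext{(\ref{eq:Pressure})}, which the stated limits $\lim_{t\to\pm\infty}p(t)=\mp\infty$ contradict), and hence Lemma~\ref{lem:_ConvergenceZeros} applies directly to $h_m$, not to $-h_m$.
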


\begin{proof}
Define for $m\in \mathbb{N}$ and $t\geq 0$
\begin{align*}
\underline{P}_{m}(t) & \coloneqq \frac{1}{m}\log
\sum _{\omega \in I^{m}}\exp (t(s_{\omega }\varphi -b_{m}+s_{
\omega }\psi )),\ \\
\overline{P}_{m}(t) & \coloneqq \frac{1}{m}\log
\sum _{\omega \in I^{m}}\exp (t\left (S_{\omega }\varphi +b_{m}+S_{
\omega }\psi \right )) ,
\\
P_{m}(t) & \coloneqq \frac{1}{m}\log
\sum _{\omega \in I^{m}}\exp (tS_{\omega }\xi ).
\end{align*}
We obtain
\begin{align*}
\underline{P}_{m}(t) & \leq P_{m}(t)
\\
& \leq \overline{P}_{m}(t) -t\frac{b_{m}}{m}
\\
& =\frac{1}{m}\log \sum _{\omega \in I^{m}}\exp (t(s_{\omega }
\varphi +s_{\omega }\psi +S_{\omega }\varphi -s_{\omega }\varphi +S_{
\omega }\psi -s_{\omega }\psi ))-t\frac{b_{m}}{m}
\\
& \leq \frac{1}{m}\log \sum _{\omega \in I^{m}}\exp \left (t(s_{
\omega }\varphi +s_{\omega }\psi )+t\left (\sum _{j=0}^{m-1}\var _{j}
\psi +\sum _{j=0}^{m-1}\var _{j}\varphi \right )\right )-t
\frac{b_{m}}{m}
\\
& \leq \underline{P}_{m}(t)+\frac{t}{m}\left (\sum _{j=0}^{m-1}\var _{j}
\varphi +\sum _{j=0}^{m-1}\var _{j}\psi -b_{m}\right ).
\end{align*}
Using the continuity of $\varphi ,\psi $ and
$\lim _{m\rightarrow \infty }b_{m}/m=0$, we deduce
$\lim _{m\rightarrow \infty }\,\overline{P}_{m}(t)=\lim _{m
\rightarrow \infty }\underline{P}_{m}(t)=P(t\xi )$. Furthermore, for all
$t\geq 0$, we have
\begin{align*}
\underline{P}_{m}(t) & \leq \overline{P}_{m}(t)\leq t\frac{b_{m}}{m}+
\frac{1}{m}\log \sum _{\omega \in I^{m}}\exp \left (tm\left (
\max \psi +\max \varphi \right )\right )
\\
& =\log (n)+t\left (\frac{b_{m}}{m}+(\max \psi +\max \varphi )\right ).
\end{align*}
Observe that for $m$ so large that $b_{m}/m\leq -\max \psi /2$, each map
$t\mapsto \overline{P}_{m}(t),t\mapsto \underline{P}_{m}(t)$ and
$t\mapsto P(t)$ is decreasing and has a unique zero in
$\left [0,-\log (n)/\left (\max \psi /2+\max \varphi \right )\right ]$.
Hence the statement follows from \reftext{Lemma~\ref{lem:_ConvergenceZeros}}.
\end{proof}
Now, we can give the proof of \reftext{Theorem~\ref{thm:WeakGibbs}} under  the OSC.
\begin{proof}[Proof of \reftext{Theorem~\ref{thm:WeakGibbs}} under the OSC]
The proof \reftext{Theorem~\ref{thm:WeakGibbs}} assuming the OSC is now an immediate
consequence of \reftext{Corollary~\ref{cor:Approx}} and \reftext{Lemma~\ref{lem:Approx-1}}.
\end{proof}
 
\subsection{Weak Gibbs measures with
overlap} 
\label{subsec:The-self-conformal-case-1}

This section relies on results from
\citep{MR1838304,MR2322179,Barral2020} on the $L^{q}$-spectrum together
with the recent results in \citep{KN2022}. Let $\nu $ and $\varrho $ be
defined as in Section~\ref{sec:Conformal-iterated-function}
and recall that $\Phi $ is non-trivial, i.e.  there is more than one
contraction and the $T_{i}$'s do not share a common fixed point. It is
easy to see that self-similar measures with or without OSC are atomless
as long as $\Phi $ is non-trivial (see \citep{KN2022}). It is an open question
under which condition the same applies to weak Gibbs measures without OSC.
For our purposes it is enough to observe that the non-triviality of
$\Phi $ implies $\card (K)=\infty $ and since $\supp (\varrho )=K$, we
infer the important observation $\varrho ((0,1))>0$. Also note that for
every $\varepsilon >0$ we can extend each $T_{i}$ to an injective contracting
$\mathcal{C}^{1}$-map
$T_{i}:(-\varepsilon ,1+\varepsilon )\rightarrow (-\varepsilon ,1+
\varepsilon )$. Hence, the results of \citep{MR1838304,MR2322179} are valid
in our setting.

First, we will prove that the $L^{q}$-spectrum of $\varrho $ exists in
$(0,1]$. Combining this with \citep[Theorem 1.1, Theorem 1.2]{KN2022} we
conclude that the spectral dimension exists and is given by
$q_{\varrho }$. To this end we need the following lemmata.
\begin{lem} 
\label{lem:WeakGibbsINequality}%
We have for any $G\subset I^{*}$ with
$\biguplus _{u\in G}\left [u\right ]=I^{\mathbb{N}}$ and
$E\in \mathfrak{B}([0,1])$ that
\begin{equation*}
\varrho \left (E\right )\geq \sum _{u\in G}c_{|u|}\nu \left (\left [u
\right ]\right )\varrho \left (T_{u}^{-1}(E)\right )
\end{equation*}
with
$c_{n}\coloneqq \mathrm{e}^{-\sum _{i=0}^{n-1}\var _{i}(\psi )}$ (and therefore
$\log \left (c_{n}\right )=o\left (n\right )$).
\end{lem}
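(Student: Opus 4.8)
The plan is to prove the inequality
\[
\varrho(E) \;\ge\; \sum_{u\in G} c_{|u|}\,\nu([u])\,\varrho\bigl(T_u^{-1}(E)\bigr)
\]
by unwinding the definition $\varrho = \nu\circ\pi^{-1}$ and decomposing the symbolic space along the partition $\{[u]:u\in G\}$. First I would observe that $\varrho(E)=\nu(\pi^{-1}(E))$ and that, since the cylinders $[u]$, $u\in G$, are pairwise disjoint with union $I^{\mathbb N}$, we have $\nu(\pi^{-1}(E))=\sum_{u\in G}\nu\bigl(\pi^{-1}(E)\cap[u]\bigr)$. The point is then to bound each summand below by $c_{|u|}\,\nu([u])\,\varrho(T_u^{-1}(E))$.

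The key step is to rewrite $\nu\bigl(\pi^{-1}(E)\cap[u]\bigr)$ using the coding relation $\pi(ux)=T_u(\pi(x))$. For $y\in[u]$, write $y=ux$ with $x\in I^{\mathbb N}$; then $\pi(y)\in E$ iff $T_u(\pi(x))\in E$ iff $\pi(x)\in T_u^{-1}(E)$ (using injectivity of $T_u$, which holds after the extension to $(-\varepsilon,1+\varepsilon)$ noted in the text). Hence
\[
\nu\bigl(\pi^{-1}(E)\cap[u]\bigr)
= \int_{[u]} \mathbbm 1_{\pi^{-1}(E)}\,\d\nu
= \int_{I^{\mathbb N}} \mathbbm 1_{[u]}(uy)\,\mathbbm 1_{\pi^{-1}(T_u^{-1}(E))}(y)\cdot\frac{\d\nu\!\restriction_{[u]}}{\d(\cdot)}\,\ldots
\]
— more cleanly, I would use the quasi-invariance expressed by the weak Gibbs inequality \reftext{(\ref{eq:WeakGibbsInequality})}: for $|u|=n$, the pushforward of $\nu$ under the map $x\mapsto ux$ has Radon–Nikodym density $\mathrm e^{S_n\psi(ux)}$ times a bounded distortion factor, and comparing $\nu([u])$ with $\mathrm e^{S_n\psi(u)}$ via \reftext{(\ref{eq:WeakGibbsInequality})} yields the lower bound $\nu\bigl(\pi^{-1}(E)\cap[u]\bigr)\ge \mathrm e^{-\sum_{i=0}^{n-1}\var_i(\psi)}\,\nu([u])\,\nu\bigl(\pi^{-1}(T_u^{-1}(E))\bigr)=c_n\,\nu([u])\,\varrho(T_u^{-1}(E))$. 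Summing over $u\in G$ gives the claim, and $\log(c_n)=-\sum_{i=0}^{n-1}\var_i(\psi)=o(n)$ follows from $\var_i(\psi)\to 0$ (continuity of $\psi$) via Cesàro, exactly as in Remark~\ref{rem4.2}(2).

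The main obstacle I anticipate is making the "change of variables on the symbolic space" fully rigorous: one needs the identity $\int_{[u]} h\,\d\nu = \int_{I^{\mathbb N}} h(uy)\,\mathrm e^{S_{|u|}\psi(uy)}\,\d\nu(y)$ for nonnegative measurable $h$, which is the iterated form of the Perron–Frobenius duality $L_\psi^*\nu=\nu$ (i.e. $\nu(\sigma^{-1}(\cdot))$-type manipulation already used in Definition~\ref{def:Gibbs_measure} and in the proof of Lemma~\ref{lem:ScalingProperty}), together with the disjointness of the $[u]$ to pick out the single preimage branch labelled by $u$. Once that identity is in hand, the weak Gibbs bound \reftext{(\ref{eq:WeakGibbsInequality})} converts $\mathrm e^{S_{|u|}\psi(uy)}$ into $c_{|u|}\nu([u])$ up to the $o(|u|)$ error, and the estimate is immediate. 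A minor point to check is that $T_u^{-1}(E)\cap[0,1]$ is the relevant set and that $\pi^{-1}(T_u^{-1}(E))$ has $\nu$-measure equal to $\varrho(T_u^{-1}(E))$, which is just the definition of $\varrho$ again.
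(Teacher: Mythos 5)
Your proposal is correct and follows essentially the same route as the paper: decompose $\nu(\pi^{-1}(E))=\sum_{u\in G}\nu(\pi^{-1}(E)\cap[u])$, use the iterated Perron--Frobenius duality $\int_{[u]}h\,\d\nu=\int \mathrm{e}^{S_{|u|}\psi(ux)}h(ux)\,\d\nu(x)$ (which the paper writes explicitly as $\int L_\psi^{|u|}(\mathbbm{1}_{[u]}\cdot h)\,\d\nu$), convert $\mathbbm{1}_E\circ\pi(ux)=\mathbbm{1}_{T_u^{-1}(E)}\circ\pi(x)$ via $\pi(ux)=T_u(\pi(x))$, and bound $\mathrm{e}^{S_{|u|}\psi(ux)}\geq c_{|u|}\nu([u])$ by the weak Gibbs inequality \reftext{(\ref{eq:WeakGibbsInequality})}. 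The only cosmetic difference is that the paper carries out the change-of-variables step inline with the Perron--Frobenius operator rather than isolating it as a stated identity, but the argument is the same.
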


\begin{proof}
For all $E\in \mathfrak{B}([0,1])$ and $u\in I^{*}$, we have
\begin{align*}
\nu \left (\pi ^{-1}(E)\cap [u]\right )= & \int _{[u]}\mathbbm{1}_{E}
\circ \pi \;\mathrm{d}\nu =\int L_{\psi }^{|u|}(\mathbbm{1}_{[u]}(x)
\mathbbm{1}_{E}(\pi (x)))\;\mathrm{d}\nu (x)
\\
= & \int \sum _{j\in I^{|u|}}\mathrm{e}^{S_{|u|}\psi (j x)}
\mathbbm{1}_{[u]}(j x)\mathbbm{1}_{E}(\pi (j x))\;\mathrm{d}\nu (x)
\\
= & \int \mathrm{e}^{S_{|u|}\psi (ux)}\mathbbm{1}_{E}(\pi (ux))\;
\mathrm{d}\nu (x)\geq \mathrm{e}^{-\sum _{i=0}^{|u|-1}\var _{i}(\psi )}
\varrho \left (T_{u}^{-1}\left (E\right )\right )\nu \left (\left [u
\right ]\right ).
\end{align*}
Setting
$c_{n}\coloneqq \mathrm{e}^{-\sum _{i=0}^{n-1}\var _{i}(\psi )}$ and summing
over $u\in G$ we obtain
\begin{equation*}
\varrho \left (E\right )=\sum _{u\in G}\nu \left (\pi ^{-1}(E)\cap [u]
\right )\geq \sum _{u\in G}c_{|u|}\nu \left (\left [u\right ]\right )
\varrho \left (T_{u}^{-1}(E)\right ).
\end{equation*}
Also, the continuity of the potential $\psi $ implies
$\log \left (c_{n}\right )=o\left (n\right )$.
\end{proof}
For $u\in I^{*}$ let us define $K_{u}\coloneqq T_{u}(K)$. Then, for
$n\geq 2$ the set
\begin{equation*}
W_{n}\coloneqq \left \{  u\in I^{*}:\text{diam}\left (K_{u}\right )
\leq 2^{-n}<\text{diam}\left (K_{u^{-}}\right )\right \} ,
\end{equation*}
defines a partition of $I^{\mathbb{N}}$.
 
\begin{lem} 
\label{lem:WeakGibbsEstimation}%
For any $0<q<1$ there exists a sequence
$\left (s_{n}\right )_{n\in \mathbb{N}}\in \mathbb{R}_{>0}^{\mathbb{N}}$
with $\log s_{n}=o\left (n\right )$ such that for every
$n,m\in \mathbb{N}$ and $\widetilde{Q}\in \mathcal{D}_{n}$
\begin{equation*}
\sum _{B\in \mathcal{D}_{n},B\sim \widetilde{Q}}\:\sum _{Q\in
\mathcal{D}_{m+n},Q\subset B}\varrho (Q)^{q}\geq s_{n}\varrho (
\widetilde{Q})^{q}\min _{u\in W_{n}}\sum _{Q\in \mathcal{D}_{m+n}}
\varrho \left (T_{u}^{-1}\left (Q\right )\right )^{q}
\end{equation*}
where $B\sim \widetilde{Q}$ means that the closures of $B$ and
$\widetilde{Q}$ intersect.
\end{lem}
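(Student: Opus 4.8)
The plan is, for each fixed $\widetilde Q=A_n^k\in\mathcal D_n$, to collapse $\varrho$ near $\widetilde Q$ into a single averaged measure and to compare both sides of the claim with its local $L^q$-sum. I would set $G_{\widetilde Q}\coloneqq\{u\in W_n:K_u\cap\widetilde Q\neq\emptyset\}$, $\theta\coloneqq\sum_{u\in G_{\widetilde Q}}\nu([u])$, and $\bar\mu\coloneqq\theta^{-1}\sum_{u\in G_{\widetilde Q}}\nu([u])\,(T_u)_*\varrho$, where $(T_u)_*\varrho$ is the image measure $A\mapsto\varrho(T_u^{-1}(A))$, and then record three facts. (i) $G_{\widetilde Q}$ is finite, since $\diam(K_{u^{-}})>2^{-n}$ and the uniform contraction of $\Phi$ force $|u|\le L_n$ for all $u\in W_n$ with $L_n=O(n)$. (ii) Because $\pi([u])=K_u$, we have $\pi^{-1}(\widetilde Q)\subseteq\bigcup_{u\in G_{\widetilde Q}}[u]$, hence $\varrho(\widetilde Q)=\nu(\pi^{-1}(\widetilde Q))\le\theta$. (iii) Since $\diam(K_u)\le2^{-n}$ and any point $p$ of $\widetilde Q$ satisfies $(k-1)2^{-n}<p\le k2^{-n}$, one gets $K_u\subseteq((k-2)2^{-n},(k+1)2^{-n}]=A_n^{k-1}\cup A_n^{k}\cup A_n^{k+1}$ for every $u\in G_{\widetilde Q}$, and as $(T_u)_*\varrho$ is carried by $K_u$, the measure $\bar\mu$ is carried by these three dyadic intervals.

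For the lower bound I would apply Lemma~\ref{lem:WeakGibbsINequality} to the partition $W_n$: for every $Q\in\mathcal D_{m+n}$,
\[
\varrho(Q)\ \ge\ \sum_{u\in W_n}c_{|u|}\,\nu([u])\,\varrho(T_u^{-1}(Q))\ \ge\ c_{L_n}\sum_{u\in G_{\widetilde Q}}\nu([u])\,\varrho(T_u^{-1}(Q))\ =\ c_{L_n}\,\theta\,\bar\mu(Q),
\]
discarding the non-negative terms with $u\notin G_{\widetilde Q}$ and using that $c_\ell=\mathrm e^{-\sum_{i=0}^{\ell-1}\var_i(\psi)}$ is non-increasing. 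Next, passing from the full grid to the neighbours of $\widetilde Q$ is harmless: if $\bar\mu(Q)>0$, then $Q$ meets some $K_u$ with $u\in G_{\widetilde Q}$, so by (iii) and the half-open structure of dyadic intervals $Q$ lies in exactly one of $A_n^{k-1},A_n^{k},A_n^{k+1}$, and $\varrho(Q)>0$ forces that interval into $\mathcal D_n$, so it is $\sim\widetilde Q$. Raising the display to the power $q$ and summing over all such $Q$ gives
\[
\sum_{B\in\mathcal D_n,\,B\sim\widetilde Q}\ \sum_{Q\in\mathcal D_{m+n},\,Q\subset B}\varrho(Q)^q\ \ge\ c_{L_n}^{q}\,\theta^{q}\sum_{Q\in\mathcal D_{m+n}}\bar\mu(Q)^q .
\]

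The final ingredient is $\sum_{Q\in\mathcal D_{m+n}}\bar\mu(Q)^q\ge\min_{u\in W_n}\sum_{Q\in\mathcal D_{m+n}}\varrho(T_u^{-1}(Q))^q$, which is exactly where $0<q<1$ is used: since $t\mapsto t^q$ is concave, Jensen's inequality with the probability weights $w_u\coloneqq\nu([u])/\theta$ gives, for each $Q$, $\bar\mu(Q)^q=\bigl(\sum_u w_u\,\varrho(T_u^{-1}(Q))\bigr)^q\ge\sum_u w_u\,\varrho(T_u^{-1}(Q))^q$; summing over $Q$, swapping the two sums and using $\sum_u w_u=1$ yields the claimed bound. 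Combining the three displays with $\varrho(\widetilde Q)\le\theta$ proves the lemma with $s_n\coloneqq c_{L_n}^{q}$, which depends on $n$ only; and $\log s_n=o(n)$ because $L_n=O(n)$ and $\sum_i\var_i(\psi)$ is sublinear by continuity of $\psi$.

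I expect the one genuine difficulty to be exactly the step that keeps the estimate sharp: $\varrho(Q)$ must \emph{not} be replaced by its single largest cylinder contribution---that would cost a factor equal to the overlap multiplicity of the $K_u$, $u\in W_n$, around $\widetilde Q$, which can grow exponentially in $n$ once overlaps are allowed---so the whole sum over $G_{\widetilde Q}$ has to be retained and then absorbed through the concavity of $t\mapsto t^q$ via Jensen with the natural weights $\nu([u])/\theta$. What remains is the routine but slightly delicate bookkeeping with half-open dyadic intervals confirming that $\diam(K_u)\le 2^{-n}$ really does push all the mass of $\bar\mu$ onto $\widetilde Q$ together with its two neighbours, so that restricting the grid sum to $B\sim\widetilde Q$ loses nothing.
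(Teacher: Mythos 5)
Your proof is correct. The route is genuinely different from the paper's, and a bit slicker. The paper's argument introduces a ``$q$-heavy'' box $H(u)$ for each $u\in W_n$, partitions $W_n$ according to the value of $H(u)$, and then works box by box: it uses the bound $\sum_{Q\subset H(u)}\varrho(T_u^{-1}(Q))^q\ge\tfrac13\sum_Q\varrho(T_u^{-1}(Q))^q$ (the $\tfrac13$ coming from the at-most-three-intervals observation), applies concavity within each box to the sub-probability $p_-(B)=\sum_{u:H(u)=B}\nu([u])$, and finally uses the subadditivity $\bigl(\sum_B p_-(B)\bigr)^q\le\sum_B p_-(B)^q$ to absorb $\varrho(\widetilde Q)^q$. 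Your argument instead aggregates all of $G_{\widetilde Q}$ into the single averaged measure $\bar\mu$, observes that $\bar\mu$ is carried by the three neighbours of $\widetilde Q$ (so restricting the $Q$-sum to those neighbours loses nothing), and applies Jensen once at the very end to pass from $\sum_Q\bar\mu(Q)^q$ to $\min_{u}\sum_Q\varrho(T_u^{-1}(Q))^q$. Both proofs hinge on the same three ingredients---Lemma~\ref{lem:WeakGibbsINequality}, the diameter bound $\diam(K_u)\le2^{-n}$, and concavity of $t\mapsto t^q$---but your single averaging step sidesteps the heavy-box bookkeeping entirely and in particular eliminates the factor $\tfrac13$, giving $s_n=c_{L_n}^q$ rather than $\tfrac13\min_{u\in W_n}c_{|u|}^q$ (both sub-exponential, so the lemma's conclusion is unaffected). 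One small point worth making explicit in a write-up: the implication ``$\bar\mu(Q)>0\Rightarrow Q\in\mathcal D_{m+n}$ and $Q$ lies under a $B\sim\widetilde Q$'' uses both that $\varrho(Q)\ge c_{L_n}\theta\,\bar\mu(Q)>0$ (so $Q$ really belongs to $\mathcal D_{m+n}$) and that dyadic intervals at level $m+n$ nest into those at level $n$, so meeting $K_u$ forces containment in one of the three half-open neighbours; you handle both, but they are easy to gloss over.
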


\begin{proof}
As in \citep{MR1838304} for $n,m\in \mathbb{N}$, $u\in W_{n}$ and
$A\in \mathcal{D}_{n}$, let us define
\begin{equation*}
w\left (u,A\right )\coloneqq \sum _{Q\in \mathcal{D}_{n+m}:Q\subset A}
\varrho \left (T_{u}^{-1}\left (Q\right )\right )^{q}.
\end{equation*}
The interval $A\in \mathcal{D}_{n}$ on which $w\left (u,A\right )$ attains
its maximum will be called $q$-heavy for $u\in W_{n}$. We will denote the
$q$-heavy box by $H(u)$ (if there are more than one interval which maximizes
$w(u,\cdot )$ we choose one of them arbitrarily). Note that every
$K_{u}$ with $u\in W_{n}$ intersects at most $3$ intervals in
$\mathcal{D}_{n}$. Hence, we obtain for $u\in W_{n}$,
\begin{align*}
\sum _{Q\in \mathcal{D}_{n+m}}\varrho \left (T_{u}^{-1}\left (Q
\right )\right )^{q}&=\sum _{B'\in \mathcal{D}_{n}}\sum _{Q\in
\mathcal{D}_{n+m}:Q\subset B'}\varrho \left (T_{u}^{-1}\left (Q
\right )\right )^{q}\\
&\leq 3\sum _{Q\in \mathcal{D}_{n+m}:Q\subset H(u)}
\varrho \left (T_{u}^{-1}\left (Q\right )\right )^{q}.
\end{align*}
This leads to
\begin{eqnarray}
\sum _{Q\in \mathcal{D}_{n+m}:Q\subset H(u)}\varrho \left (T_{u}^{-1}
\left (Q\right )\right )^{q}&\geq& \frac{1}{3}\sum _{Q\in \mathcal{D}_{n+m}}
\varrho \left (T_{u}^{-1}\left (Q\right )\right )^{q}\nonumber\\
&\geq& \frac{1}{3}
\min _{v\in W_{n}}\sum _{Q\in \mathcal{D}_{n+m}}\varrho \left (T_{v}^{-1}
\left (Q\right )\right )^{q}.
\label{eq:1/3-Absch=0000E4tzung}
\end{eqnarray}
Further, for every $Q\in \mathcal{D}_{n+m}$ and
$B\in \mathcal{D}_{n}$, by \reftext{Lemma~\ref{lem:WeakGibbsINequality}}, we have
\begin{align*}
\varrho \left (Q\right ) & \geq \sum _{u\in W_{n}}c_{|u|}\nu \left (
\left [u\right ]\right )\varrho \left (T_{u}^{-1}(Q)\right )\geq
\sum _{u\in W_{n}:B=H(u)}c_{|u|}\nu \left (\left [u\right ]\right )
\varrho \left (T_{u}^{-1}(Q)\right )
\\
& \geq \left (\min _{u\in W_{n}}c_{|u|}\right )\sum _{u\in W_{n}:B=H(u)}
\nu \left (\left [u\right ]\right )\varrho \left (T_{u}^{-1}(Q)
\right ).
\end{align*}
Setting
\begin{equation*}
p_{-}\left (B\right )\coloneqq \sum _{u\in W_{n}:B=H(u)}\nu \left (
\left [u\right ]\right )
\end{equation*}
and, if $p_{-}\left (B\right )>0$, using the concavity of the function
$x\mapsto x^{q}$ for $0<q<1$, we obtain
\begin{align*}
\varrho \left (Q\right )^{q} & \geq p_{-}\left (B\right )^{q}\left (
\min _{u\in W_{n}}c_{|u|}\right )^{q}\left (\sum _{u\in W_{n}:B=H(u)}
\frac{\nu \left (\left [u\right ]\right )\varrho \left (T_{u}^{-1}(Q)\right )}{p_{-}\left (B\right )}
\right )^{q}
\\
& \geq p_{-}\left (B\right )^{q-1}\left (\min _{u\in W_{n}}c_{|u|}
\right )^{q}\sum _{u\in W_{n}:B=H(u)}\nu \left (\left [u\right ]
\right )\varrho \left (T_{u}^{-1}(Q)\right )^{q}.
\end{align*}
Summing over $Q\in \mathcal{D}_{n+m}$ with $Q\subset B$, and using \reftext{(\ref{eq:1/3-Absch=0000E4tzung})},
we infer
\begin{align*}
&\sum _{Q\subset B,Q\in \mathcal{D}_{n+m}}\varrho \left (Q\right )^{q} \\
&\quad
\geq p_{-}\left (B\right )^{q-1}\left (\min _{u\in W_{n}}c_{|u|}
\right )^{q}\sum _{u\in W_{n}:B=H(u)}\nu \left (\left [u\right ]
\right )\sum _{Q\subset B,Q\in \mathcal{D}_{n+m}}\varrho \left (T_{u}^{-1}(Q)
\right )^{q}
\\
&\quad  \geq \frac{p_{-}\left (B\right )^{q}}{3}\left (\min _{u\in W_{n}}c_{|u|}
\right )^{q}\min _{v\in W_{n}}\sum _{Q\in \mathcal{D}_{n+m}}\varrho
\left (T_{v}^{-1}\left (Q\right )\right )^{q},
\end{align*}
which is also valid in the case $p_{-}\left (B\right )=0$. For
$\widetilde{Q}\in \mathcal{D}_{n}$ and $u\in W_{n}$ with
$K_{u}\cap \widetilde{Q}\neq \varnothing $ we have
$K_{u}\subset \bigcup _{B\sim \widetilde{Q},B\in \mathcal{D}_{n}}B$, as
a consequence of $\text{diam}\left (K_{u}\right )\leq 2^{-n}$. In particular,
every $K_{u}$ that intersects $\widetilde{Q}$ must have an interval
$B\in \mathcal{D}_{n}$ with $B\sim \widetilde{Q}$ which is $q$-heavy for
$u$. Hence, we obtain
\begin{equation*}
\varrho \left (\widetilde{Q}\right )\leq \sum _{u\in W_{n}:K_{u}\cap
\widetilde{Q}\neq \varnothing }\nu \left (\left [u\right ]\right )
\leq \sum _{B\sim \widetilde{Q}:B\in \mathcal{D}_{n}}\sum _{u\in W_{n}:B=H(u)}
\nu \left (\left [u\right ]\right )=\sum _{B\sim \widetilde{Q}:B\in
\mathcal{D}_{n}}p_{-}\left (B\right ).
\end{equation*}
Using $0<q<1$, we conclude
\begin{equation*}
\varrho \left (\widetilde{Q}\right )^{q}\leq \left (\sum _{B\sim
\widetilde{Q}:B\in \mathcal{D}_{n}}p_{-}\left (B\right )\right )^{q}
\leq \sum _{B\sim \widetilde{Q}:B\in \mathcal{D}_{n}}p_{-}\left (B
\right )^{q}.
\end{equation*}
Summing over all $B\in \mathcal{D}_{n}$ with $B\sim \widetilde{Q}$ gives
\begin{align*}
&\sum _{B\sim \widetilde{Q}:B\in \mathcal{D}_{n}}\sum _{Q\subset B:Q
\in \mathcal{D}_{n+m}}\varrho \left (Q\right )^{q}\\
 &\quad  \ge \sum _{B
\sim \widetilde{Q}:B\in \mathcal{D}_{n}}
\frac{p_{-}\left (B\right )^{q}}{3}\left (\min _{u\in W_{n}}c_{|u|}
\right )^{q}\min _{v\in W_{n}}\sum _{Q\in \mathcal{D}_{n+m}}\varrho
\left (T_{v}^{-1}\left (Q\right )\right )^{q},
\\
&\quad  \geq \frac{\varrho \left (\widetilde{Q}\right )^{q}}{3}\left (\min _{u
\in W_{n}}c_{|u|}\right )^{q}\min _{v\in W_{n}}\sum _{Q\in
\mathcal{D}_{n+m}}\varrho \left (T_{v}^{-1}\left (Q\right )\right )^{q}.
\end{align*}
Note that for every $u\in W_{n}$, by the definition of $W_{n}$, we have
\begin{equation*}
|u|<
\frac{n\log (2)-\log \left (\alpha _{\max }\right )}{-\log \left (\alpha _{\max }\right )},
\end{equation*}
with
$\alpha _{\max }\coloneqq \max _{i=1,\ldots ,n}\max _{x\in [0,1]}
\left |T_{i}'(x)\right |$. Thus, setting
$s_{n}\coloneqq 3^{-1}\min _{u\in W_{n}}c_{|u|}^{q}$ we have
$\lim n^{-1}\log s_{n}=0$, where we used the elementary fact that for any
two sequences
$(x_{n})_{n\in \mathbb{N}}\in \mathbb{R}_{>0}^{\mathbb{N}}$ and
$\left (y_{n}\right )_{n\in \mathbb{N}}\in \mathbb{N}^{\mathbb{N}}$ with
$x_{n}=\text{o}\left (n\right )$, $y_{n}\ll n$, we have
$x_{y_{n}}=\text{o}\left (n\right )$.
\end{proof}
\begin{prop} 
\label{prop:LqSpectrum}%
The $L^{q}$-spectrum $\beta _{\varrho }$ of $\varrho $ exists on
$(0,1]$ as a limit.
\end{prop}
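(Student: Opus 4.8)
The plan is to deduce the existence of the limit from an \emph{almost supermultiplicativity} of the sums
$\Gamma_{n}(q)\coloneqq\sum_{C\in\mathcal{D}_{n}}\varrho(C)^{q}$ together with a Fekete-type argument. For $q=1$ there is nothing to prove, since $\Gamma_{n}(1)=\varrho([0,1])=1$ and hence $\beta_{n}^{\varrho}(1)=0$ for every $n$; so assume $0<q<1$. Throughout I would use the $q$-subadditivity $(a+b)^{q}\leq a^{q}+b^{q}$, which for $0<q\leq1$ also gives $\Gamma_{n}(q)\geq\sum_{C\in\mathcal{D}_{n}}\varrho(C)=1$.

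First I would turn \reftext{Lemma~\ref{lem:WeakGibbsEstimation}} into a supermultiplicative bound for $\Gamma_{n}(q)$. Summing the inequality there over all $\widetilde{Q}\in\mathcal{D}_{n}$, using that each fixed $B\in\mathcal{D}_{n}$ satisfies $B\sim\widetilde{Q}$ for at most three $\widetilde{Q}\in\mathcal{D}_{n}$ (namely $B$ and its two neighbours) and that every $Q\in\mathcal{D}_{m+n}$ lies in a unique $B\in\mathcal{D}_{n}$, one gets
\begin{equation*}
3\,\Gamma_{m+n}(q)\;\geq\;s_{n}\,\Gamma_{n}(q)\;\min_{u\in W_{n}}\ \sum_{Q\in\mathcal{D}_{m+n}}\varrho\bigl(T_{u}^{-1}(Q)\bigr)^{q}.
\end{equation*}
It then remains to bound the last factor below by a sub-exponential-in-$n$ multiple of $\Gamma_{m}(q)$, uniformly in $u\in W_{n}$. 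For $u\in W_{n}$ the definition of $W_{n}$ together with the non-triviality of $\Phi$ (which forces $\diam(K)>0$) makes $\diam(K_{u})$ comparable to $2^{-n}$ up to a factor $\mathrm{e}^{\pm o(n)}$, and since $|u|\ll n$ the weak bounded distortion property (\reftext{Lemma~\ref{lem:weak_Bounded-distortion_Property}}) transfers this to $|T_{u}'|$ on $[0,1]$; consequently every interval $T_{u}^{-1}(Q)$, $Q\in\mathcal{D}_{m+n}$, meeting $[0,1]$ has length comparable to $2^{-m}$ up to the same factor. Since the half-open dyadic intervals tile $\mathbb{R}$, the family $\{T_{u}^{-1}(Q)\}_{Q}$ tiles $T_{u}^{-1}(\mathbb{R})\supseteq[0,1]\supseteq\supp(\varrho)$, and a routine comparison of $L^{q}$-sums over two partitions of $[0,1]$ into intervals of comparable mesh (again by $q$-subadditivity) gives $\sum_{Q}\varrho(T_{u}^{-1}(Q))^{q}\geq\mathrm{e}^{-o(n)}\Gamma_{m}(q)$. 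Combining this with $\log s_{n}=o(n)$ yields, for all $m\in\mathbb{N}$ and all sufficiently large $n$,
\begin{equation*}
\Gamma_{m+n}(q)\;\geq\;\mathrm{e}^{-\varepsilon_{n}}\,\Gamma_{n}(q)\,\Gamma_{m}(q),\qquad \varepsilon_{n}=o(n),
\end{equation*}
with $\varepsilon_{n}$ depending on $n$ only.

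Iterating the last inequality gives, by induction on $k$, $\Gamma_{kn+r}(q)\geq\mathrm{e}^{-k\varepsilon_{n}}\Gamma_{n}(q)^{k}\Gamma_{r}(q)\geq\mathrm{e}^{-k\varepsilon_{n}}\Gamma_{n}(q)^{k}$ for $k\geq1$ and $0\leq r<n$ (with $\Gamma_{0}(q)\coloneqq1$ and using $\Gamma_{r}(q)\geq1$). Writing $N=kn+r$ with $k=\lfloor N/n\rfloor$ we get
\begin{equation*}
\frac{1}{N}\log\Gamma_{N}(q)\;\geq\;\frac{k}{N}\log\Gamma_{n}(q)-\frac{k\,\varepsilon_{n}}{N},
\end{equation*}
and since $\log\Gamma_{n}(q)\geq0$, $k/N\to 1/n$ and $kn/N\leq1$ as $N\to\infty$, letting $N\to\infty$ yields $\liminf_{N}\tfrac{1}{N}\log\Gamma_{N}(q)\geq\tfrac{1}{n}\log\Gamma_{n}(q)-\tfrac{\varepsilon_{n}}{n}$ for every large $n$. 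Taking $\limsup$ over $n$ and using $\varepsilon_{n}/n\to0$ gives $\liminf_{N}\tfrac{1}{N}\log\Gamma_{N}(q)\geq\limsup_{N}\tfrac{1}{N}\log\Gamma_{N}(q)$, so the limit $\lim_{N}\tfrac{1}{N}\log\Gamma_{N}(q)$ exists; equivalently $\beta_{\varrho}(q)=\lim_{n}\beta_{n}^{\varrho}(q)$ for every $q\in(0,1]$.

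I expect the main obstacle to be the uniform geometric comparison in the second paragraph: controlling the contraction ratio of $T_{u}$ for $u\in W_{n}$ solely through $\diam(K_{u})$ — which requires $\diam(K)>0$ and the distortion estimate of \reftext{Lemma~\ref{lem:weak_Bounded-distortion_Property}} — and then transporting a mesh-comparison of $L^{q}$-sums through $T_{u}^{-1}$ uniformly in $u\in W_{n}$ with an error that is sub-exponential in $n$ and independent of $m$. Once this is secured, the passage from \reftext{Lemma~\ref{lem:WeakGibbsEstimation}} to the supermultiplicative inequality and the concluding Fekete-type step are routine.
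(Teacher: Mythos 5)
Your proof is correct and, at the level of strategy, follows the same route as the paper: both start from Lemma~\ref{lem:WeakGibbsEstimation} and convert it into an almost-supermultiplicative inequality for the partition sums, from which existence of the limit follows. The difference is that the paper outsources two steps to Feng's work \citep{MR2322179}: the lower bound $\min_{u\in W_{n}}\sum_{Q\in\mathcal{D}_{m+n}}\varrho(T_{u}^{-1}(Q))^{q}\geq b_{q,n}\sum_{Q\in\mathcal{D}_{m}}\varrho(Q)^{q}$ with $\log b_{q,n}=o(n)$ is cited from \citep[Proposition~3.3]{MR2322179}, and the final passage from the resulting quasi-multiplicative inequality to existence of the limit is cited from \citep[Proposition~4.4]{MR2322179} (applied to the per-$\widetilde{Q}$ form of the inequality, without first summing over $\widetilde{Q}$). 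You reprove both inline: the first via a geometric mesh-comparison --- $\diam K_{u}\in[\mathrm{e}^{-o(n)}2^{-n},2^{-n}]$ for $u\in W_{n}$ (using $|u|\ll n$, uniform contraction, and the wBDP of Lemma~\ref{lem:weak_Bounded-distortion_Property}), hence $|T_{u}'|$ is pinned down up to $\mathrm{e}^{\pm o(n)}$ since $\diam K>0$, so the intervals $T_{u}^{-1}(Q)$ have lengths in $2^{-m}\diam K\cdot[\mathrm{e}^{-o(n)},\mathrm{e}^{o(n)}]$, and the two-partition $q$-subadditivity comparison gives the factor $\mathrm{e}^{-o(n)}$ uniformly in $m$ --- and the second by first summing over $\widetilde{Q}$ (the factor $3$ from the $B\sim\widetilde{Q}$ overlap is harmless) and then running a standard Fekete-type argument, which works because $\Gamma_{n}(q)\geq1$ for $0<q\leq1$. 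Both steps are sound; the mesh comparison is the place that genuinely requires the non-triviality of $\Phi$ (through $\diam K>0$) and the sub-linear distortion, and you correctly flag it as the crux. What your version buys is self-containedness and transparency of exactly where $o(n)$ control is needed; what the paper's version buys is brevity and, via Feng's Proposition~4.4 applied to the per-$\widetilde{Q}$ inequality, a statement that sits naturally inside the broader quasi-Bernoulli/multifractal framework rather than a one-off Fekete argument.
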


\begin{proof}
Let $0<q<1$. From \citep[Proposition 3.3]{MR2322179} (which holds true
for all Borel probability measures with support $K$, see remark after Proposition
3.3 in \citep{MR2322179}) it follows that there exists a sequence
$\left (b_{q,n}\right )_{n\in \mathbb{N}}$ of positive numbers with
$\log (b_{q,n})=o\left (n\right )$, such that for all
$m,n\in \mathbb{N}$ and $u\in W_{n}$
\begin{equation*}
b_{q,n}\sum _{Q\in \mathcal{D}_{m}}\varrho \left (Q\right )^{q}\leq
\sum _{Q\in \mathcal{D}_{m+n}}\varrho \left (T_{u}^{-1}\left (Q
\right )\right )^{q}.
\end{equation*}
In tandem with \reftext{Lemma~\ref{lem:WeakGibbsEstimation}} we obtain for every
$\widetilde{Q}\in \mathcal{D}_{n}$,
\begin{align*}
\sum _{B\in \mathcal{D}_{n},B\sim \widetilde{Q}}\sum _{Q\in
\mathcal{D}_{m+n}:Q\subset B}\varrho (Q)^{q} & \geq s_{n}\varrho (
\widetilde{Q})^{q}\min _{u\in W_{n}}\sum _{Q\in \mathcal{D}_{m+n}}
\varrho \left (T_{u}^{-1}\left (Q\right )\right )^{q}
\\
& \geq \left (b_{q,n}s_{n}\right )\varrho (\widetilde{Q})^{q}\sum _{Q
\in \mathcal{D}_{m}}\varrho \left (Q\right )^{q}.
\end{align*}
Clearly, $\log \left (b_{q,n}s_{n}\right )=o\left (n\right )$. Hence, we
can apply \citep[Proposition 4.4]{MR2322179}, which shows that
$\beta _{\varrho }$ exists as a limit on $(0,1]$.
\end{proof}
With this knowledge, we can prove the remaining parts of \reftext{Theorem~\ref{thm:WeakGibbs}}.
\begin{proof}[Proof of \reftext{Theorem~\ref{thm:WeakGibbs}} with overlaps]
The proof follows from \reftext{Proposition~\ref{prop:LqSpectrum}} and
\citep[Theorem 1.1, Theorem 1.2]{KN2022} and using
$\varrho ((0,1))>0$, the fact that the $L^{q}$-spectrum of
$\varrho $ and $\varrho |_{\left (0,1\right )}$ coincide on $[0,1]$ as
well as $\beta _{\varrho |_{(0,1)}}$ exists as limit on $(0,1]$.
\end{proof}
The following lemma is needed in the proof of the existence of the Minkowski
dimension for weak Gibbs measures without assuming any separation conditions.
\begin{lem} 
\label{lem:=00005CalphaMIN_endlich}
If $\varrho $ is a weak Gibbs measure for a $\mathcal{C}^{1}$-IFS, then
\begin{equation*}
M_{n}\coloneqq \max \left (-\log \varrho \left (\left (C\right )_{1}
\right ):C\in \mathcal{D}_{n}\right )\ll n,
\end{equation*}
where for $C=\left (2^{-n}k,2^{-n}(k+1)\right ]$,
$k\in \left \{  0,\ldots ,2^{n}-1\right \}  $, we define the centered interval
with triple size as
$\left (C\right )_{1}\coloneqq \left (2^{-n}(k-1),2^{-n}(k+2)\right ]$.
\end{lem}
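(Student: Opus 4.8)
The plan is to show that the $\varrho$-measure of a triple-size dyadic interval cannot decay faster than exponentially in $n$, with a rate controlled by the worst-case Birkhoff sum of the potential $\psi$. The key observation is that any triple dyadic interval $(C)_1$ of length $3\cdot 2^{-n}$ at generation $n$ must contain at least one cylinder set $T_\omega([0,1])$ with $\omega\in W_k$ for a suitably chosen $k$ comparable to $n$; indeed, since the maps are uniform contractions with $R<1$ a common bound on the contraction ratios, the sets $T_\omega([0,1])$, $\omega\in I^m$, have diameter at most $R^m$, so for $m$ with $R^m\le 2^{-n}$ there is some such $\omega$ with $T_\omega([0,1])\subset (C)_1$ (here one uses that the $T_\omega([0,1])$ for $\omega\in I^m$ cover $K=\supp(\varrho)$ and that $(C)_1$, being a concentric triple enlargement, contains any set of diameter $\le 2^{-n}$ that meets the central interval $C$; if $C\cap K=\emptyset$ then $\varrho((C)_1)$ may still be bounded below using a neighbouring cylinder, or one simply notes the max in $M_n$ is attained on intervals meeting $K$).

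Second, I would invoke the weak Gibbs inequality \reftext{(\ref{eq:WeakGibbsInequality})} from \reftext{Remark~\ref{rem4.2}}(1): for $\omega\in I^m$,
\begin{equation*}
\varrho\left(T_\omega([0,1])\right)\ge \varrho\left(\pi([\omega])\right)\ge \nu([\omega])\ge \mathrm{e}^{-\sum_{i=0}^{m-1}\var_i(\psi)}\,\mathrm{e}^{s_\omega\psi},
\end{equation*}
where $s_\omega\psi=\inf_{x\in[\omega]}S_m\psi(x)\ge -m\max|\psi|$ (here I also use \reftext{Remark~\ref{rem4.2}}(3), which gives exactly this kind of lower bound on the $\varrho$-mass of a cylinder image). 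Combining the two steps, for the appropriate $m\asymp n$ we get
\begin{equation*}
-\log\varrho\left((C)_1\right)\le -\log\varrho\left(T_\omega([0,1])\right)\le \sum_{i=0}^{m-1}\var_i(\psi)+m\max|\psi|.
\end{equation*}
Since $\sum_{i=0}^{m-1}\var_i(\psi)=o(m)$ by \reftext{Remark~\ref{rem4.2}}(2), the right-hand side is $\le C'm\le C'' n$ for large $n$, which is exactly the assertion $M_n\ll n$.

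The main obstacle will be the bookkeeping relating the dyadic scale $2^{-n}$ to the symbolic generation $m$: one must pick $m=m(n)$ so that $R^m\le 2^{-n}$ (hence $m\le \lceil n\log 2/(-\log R)\rceil$, so $m\ll n$), verify that every triple dyadic interval meeting $K$ genuinely contains a full image $T_\omega([0,1])$ for some $\omega\in I^m$ — this uses that such an image has diameter $\le R^m\le 2^{-n}$ and that the family $\{T_\omega([0,1]):\omega\in I^m\}$ covers $K$, so a point of $K\cap C$ lies in some $T_\omega([0,1])$, which is then contained in the concentric triple $(C)_1$. One should also handle, or dismiss as irrelevant to the maximum, the boundary intervals $C$ with $k\in\{0,2^n-1\}$ and intervals disjoint from $K$. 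Everything else is routine: plug the covering-cylinder bound into the weak Gibbs inequality and use the sublinearity of the variation sums, giving $M_n\le (\max|\psi|+\varepsilon_m)m\le C n$ for $n$ large, i.e. $M_n\ll n$.
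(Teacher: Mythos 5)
Your proof is correct and takes essentially the same route as the paper: locate a cylinder image of diameter at most $2^{-n}$ meeting $C$ (hence contained in $(C)_{1}$), bound its $\varrho$-mass from below via the weak Gibbs inequality \reftext{(\ref{eq:WeakGibbsInequality})}, and observe that the required symbolic depth is $O(n)$ while $\sum_{i}\var_{i}(\psi)=o(n)$. The only cosmetic difference is that you fix a generation $m\asymp n$ with $R^{m}\leq 2^{-n}$ and use $\{T_{\omega}([0,1]):\omega\in I^{m}\}$, whereas the paper uses the stopping family $W_{n}$ (and the images $K_{u}$ rather than $T_{u}([0,1])$); both give the same linear-in-$n$ depth bound.
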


\begin{proof}
Fix $C\in \mathcal{D}_{n}$ that maximizes
$-\log \varrho \left (\left (C\right )_{1}\right )$. Since
$\varrho \left (C\right )>0$ there exists $u\in W_{n}$ such that
$K_{u}\cap C\neq \varnothing $. Since
$\diam \left (K_{u}\right )\leq 2^{-n}$ we have
$K_{u}\subset \left (C\right )_{1}$ and for arbitrary
$x\in I^{\mathbb{N}}$, the weak Gibbs property gives
$\varrho \left (K_{u}\right )\geq \nu \left (\left [u\right ]\right )
\geq c_{\left |u\right |}\exp \left (S_{\left |u\right |}\psi \left (ux
\right )\right )$ with
$c_{|u|}\coloneqq \mathrm{e}^{-\sum _{i=0}^{|u|-1}\var _{i}\psi }$. Since
$\left |u\right |\leq \left (\log \left (\alpha _{\max }\right )-n
\log (2)\right )/\log \left (\alpha _{\max }\right )$ with
$\alpha _{\max }\coloneqq \max _{i=1,\ldots ,n}\max _{x\in [0,1]}
\left |T_{i}'(x)\right |$ and
$S_{\left |u\right |}\psi \left (ux\right )\geq \left |u\right |\min
\psi $ we get
\begin{equation*}
\varrho \left (\left (C\right )_{1}\right )\geq \varrho \left (K_{u}
\right )\geq c_{\left |u\right |}^{-1}\exp \left (\left |u\right |
\min \psi \right )
\end{equation*}
and further
\begin{align*}
\limsup _{n\rightarrow \infty }
\frac{-\log \varrho \left (\left (C\right )_{1}\right )}{n} & \leq
\limsup _{n\rightarrow \infty }
\frac{\log c_{\left |u\right |}-\min \psi \left |u\right |}{n}
\\
& \leq \limsup _{n\rightarrow \infty }
\frac{\log c_{\left |u\right |}+\min \psi \left (n\log (2)-\log \left (\alpha _{\max }\right )\right )/\log \left (\alpha _{\max }\right )}{n}
\\
& \leq
\frac{\min \psi \log \left (2\right )}{\log \left (\alpha _{\max }\right )}<
\infty ,
\end{align*}
where we used as above the fact that $\log c_{n}=o\left (n\right )$ and
$\min \psi <0$.
\end{proof}
\begin{prop} 
\label{prop:_ApplicationRiedi}%
If $\varrho $ is a weak Gibbs measure for an $\mathcal{C}^{1}$-IFS, then
the upper and lower Minkowski dimension of
$\supp \left (\varrho \right )$ exists, i{.\,e}.
\begin{equation*}
\underline{\dim }_{M}\left (\supp \left (\varrho \right )\right )=
\overline{\dim }_{M}\left (\supp \left (\varrho \right )\right ).
\end{equation*}
In particular, the $L^{q}$-spectrum $\beta _{\varrho }$ exists as a limit
on the closed unit interval.
\end{prop}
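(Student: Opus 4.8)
The plan is to reduce the statement to the behaviour of the $L^{q}$-spectrum at $q=0$ and to bootstrap from \reftext{Proposition~\ref{prop:LqSpectrum}}. That proposition already gives that $\beta_{\varrho}$ exists as a limit on $(0,1]$; since $\beta_{n}^{\varrho}(0)=\log\card(\mathcal{D}_{n})/(n\log 2)$, where $\beta_{\varrho}(0)=\limsup_{n}\beta_{n}^{\varrho}(0)$ by definition of $\beta_{\varrho}$ and $\beta_{\varrho}(0)=\overline{\dim}_{M}(\supp(\varrho))$ as recalled in Section~\ref{sec3}, everything comes down to showing that $\lim_{n}\beta_{n}^{\varrho}(0)$ exists, i.e.\ $\liminf_{n}\beta_{n}^{\varrho}(0)\geq\limsup_{n}\beta_{n}^{\varrho}(0)$. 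For the Minkowski-dimension statement one also needs $\liminf_{n}\beta_{n}^{\varrho}(0)=\underline{\dim}_{M}(\supp(\varrho))$: this follows from \reftext{Lemma~\ref{lem:=00005CalphaMIN_endlich}}, which shows that every dyadic interval of generation $n$ meeting $\supp(\varrho)$ has triple-size enlargement of $\varrho$-mass $\geq\mathrm{e}^{-M_{n}}>0$, hence contains an element of $\mathcal{D}_{n}$; thus $\card(\mathcal{D}_{n})$ and the dyadic box-counting number of $\supp(\varrho)$ agree up to a bounded factor, and the passage from the scales $2^{-n}$ to arbitrary $\delta\downarrow 0$ is routine (and consistent with the grid-independence of $\beta_{\varrho}$ à la Riedi \citep{MR1312056,Riedi_diss}).

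The heart of the matter is that $\beta_{\varrho}$ does not drop at the origin, and here \reftext{Lemma~\ref{lem:=00005CalphaMIN_endlich}} is decisive. Fix $C\in\mathcal{D}_{n}$. Since $(C)_{1}$ is the union of $C$ with its two dyadic neighbours and $\varrho((C)_{1})\geq\mathrm{e}^{-M_{n}}$, at least one of these three intervals carries $\varrho$-mass $\geq\tfrac{1}{3}\mathrm{e}^{-M_{n}}$, and therefore belongs to $\mathcal{D}_{n}$. A given interval is such a heavy neighbour for at most three members of $\mathcal{D}_{n}$, so the set $\mathcal{D}_{n}^{\ast}\coloneqq\{C\in\mathcal{D}_{n}:\varrho(C)\geq\tfrac{1}{3}\mathrm{e}^{-M_{n}}\}$ satisfies $\card(\mathcal{D}_{n}^{\ast})\geq\tfrac{1}{3}\card(\mathcal{D}_{n})$. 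Consequently, for $0<q<1$,
\begin{equation*}
\sum_{C\in\mathcal{D}_{n}}\varrho(C)^{q}\ \geq\ \card(\mathcal{D}_{n}^{\ast})\left(\tfrac{1}{3}\mathrm{e}^{-M_{n}}\right)^{q}\ \geq\ \tfrac{1}{3}\,3^{-q}\,\card(\mathcal{D}_{n})\,\mathrm{e}^{-qM_{n}},
\end{equation*}
whence $\beta_{n}^{\varrho}(q)\geq\beta_{n}^{\varrho}(0)-\frac{qM_{n}}{n\log 2}-\frac{(1+q)\log 3}{n\log 2}$. Taking $\limsup_{n}$ and using $M_{n}\ll n$, so that $c\coloneqq\limsup_{n}M_{n}/(n\log 2)<\infty$, we obtain $\beta_{\varrho}(q)\geq\limsup_{n}\beta_{n}^{\varrho}(0)-qc$ for every $q\in(0,1)$.

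Combining this with the monotonicity of each $\beta_{n}^{\varrho}$ (so $\beta_{n}^{\varrho}(0)\geq\beta_{n}^{\varrho}(q)$) and with \reftext{Proposition~\ref{prop:LqSpectrum}} (so $\liminf_{n}\beta_{n}^{\varrho}(q)=\beta_{\varrho}(q)$ for $q\in(0,1]$) gives
\begin{equation*}
\liminf_{n}\beta_{n}^{\varrho}(0)\ \geq\ \beta_{\varrho}(q)\ \geq\ \limsup_{n}\beta_{n}^{\varrho}(0)-qc\qquad\text{for all }q\in(0,1),
\end{equation*}
and letting $q\downarrow 0$ forces $\liminf_{n}\beta_{n}^{\varrho}(0)\geq\limsup_{n}\beta_{n}^{\varrho}(0)$, hence equality. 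Thus $\lim_{n}\beta_{n}^{\varrho}(0)$ exists; together with \reftext{Proposition~\ref{prop:LqSpectrum}} this shows that $\beta_{\varrho}$ exists as a limit on all of $[0,1]$, and by the identifications above $\underline{\dim}_{M}(\supp(\varrho))=\overline{\dim}_{M}(\supp(\varrho))$.

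The step I expect to be the genuine obstacle is the non-drop of $\beta_{\varrho}$ at $q=0$: a priori the convex functions $\beta_{n}^{\varrho}$ are not equi-continuous at the origin --- their right derivatives at $0$ are averages of $\log\varrho(C)$ over $C\in\mathcal{D}_{n}$ and can diverge to $-\infty$ --- so $\beta_{\varrho}$ could conceivably jump down there, which would break the scheme. What rescues it is exactly the uniform-in-$C$ subexponential lower bound $\varrho((C)_{1})\geq\mathrm{e}^{-M_{n}}$ with $M_{n}\ll n$ from \reftext{Lemma~\ref{lem:=00005CalphaMIN_endlich}}, whose own proof rests on the weak Gibbs inequality together with the definition of $W_{n}$ via the weak bounded distortion property (\reftext{Lemma~\ref{lem:weak_Bounded-distortion_Property}}). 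The remaining ingredients --- comparing $\card(\mathcal{D}_{n})$ with the genuine box-counting number of $\supp(\varrho)$ and passing from dyadic to arbitrary scales --- are routine.
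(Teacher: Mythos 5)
Your proof is correct. Both you and the paper rest on \reftext{Lemma~\ref{lem:=00005CalphaMIN_endlich}} (the subexponential bound $M_n\ll n$ on $-\log\varrho((C)_1)$) together with \reftext{Proposition~\ref{prop:LqSpectrum}} and the standard identification of $\liminf_n\beta_n^\varrho(0)$ with $\underline{\dim}_M(\supp(\varrho))$, so the essential ingredients coincide; but the mechanism by which you rule out a ``drop'' of $\beta_\varrho$ at $q=0$ is genuinely different. The paper follows Riedi and replaces $\varrho(C)$ by $\varrho((C)_1)$ to extend $\beta_\varrho$ to a convex function $\widetilde{\beta}_\varrho$ on all of $\mathbb{R}$, uses \reftext{Lemma~\ref{lem:=00005CalphaMIN_endlich}} to show this extension is finite for $q<0$, and then invokes continuity of a convex function that is finite on a neighbourhood of $0$ to pass to the limit $q\searrow 0$. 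You never leave the positive axis: the heavy-neighbour counting argument ($\card(\mathcal{D}_n^{\ast})\geq\tfrac13\card(\mathcal{D}_n)$ with $\mathcal{D}_n^{\ast}$ carrying mass at least $\tfrac13\mathrm{e}^{-M_n}$) gives the explicit lower bound $\beta_n^\varrho(q)\geq\beta_n^\varrho(0)-\bigl(qM_n+(1+q)\log 3\bigr)/(n\log 2)$, which after $\limsup_n$ and $\liminf_n$ forces $\liminf_n\beta_n^\varrho(0)\geq\limsup_n\beta_n^\varrho(0)-qc$, and then $q\downarrow 0$ closes it. In effect, you prove directly (for $q\in(0,1)$) the inequality the paper obtains indirectly from the finiteness estimate at $q<0$ via convexity; your route is more elementary in that it avoids the negative-half-line extension entirely, at the cost of doing the combinatorial counting by hand. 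Two very minor remarks: the step $\limsup_n(a_n-b_n)\geq\limsup_n a_n-\limsup_n b_n$ that you use implicitly when passing to $\beta_\varrho(q)\geq\limsup_n\beta_n^\varrho(0)-qc$ is correct but worth stating, since the naive ``sum of $\limsup$s'' rule fails in general; and the claim that every dyadic interval meeting $\supp(\varrho)$ has $\varrho\bigl((C)_1\bigr)\geq\mathrm{e}^{-M_n}$ is not literally what \reftext{Lemma~\ref{lem:=00005CalphaMIN_endlich}} says (that lemma quantifies only over $C\in\mathcal{D}_n$), though the comparison of $\card(\mathcal{D}_n)$ with the dyadic box-count of $\supp(\varrho)$ up to a bounded factor holds by an elementary argument that does not need that lemma.
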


\begin{proof}
We will make use of an observation from
\citep[Proposition 2]{MR1312056} that: If we replace
$\varrho \left (C\right )$ with
$\varrho \left (\left (C\right )_{1}\right )$ for
$C\in \mathcal{D}_{n}$, $n\in \mathbb{N}$, in the definition of
$\beta _{\varrho }$, its value does not change for $q\geq 0$. In this way
we can extend $\beta _{\varrho }$ to the negative half-line and denote
this extension, now defined on $\mathbb{R}$, by
$\widetilde{\beta }_{\varrho }$. On the one hand, by \reftext{(\ref{eq:WeakGibbsInequality})},
for all $q\in (0,1]$,
\begin{align*}
\widetilde{\beta }_{\varrho }\left (q\right )&=\beta _{\varrho }\left (q
\right )=\liminf _{n\rightarrow \infty }\frac{1}{\log 2^{n}}\log \sum _{C
\in \mathcal{D}_{n}}\varrho \left (C\right )^{q}\leq \liminf _{n
\rightarrow \infty }
\frac{\log \card \left (\mathcal{D}_{n}\right )}{\log 2^{n}}\\
&=
\underline{\dim }_{M}\left (\supp \left (\varrho \right )\right ).
\end{align*}
Hence,
$\lim _{q\searrow 0}\widetilde{\beta }_{\varrho }\left (q\right )
\leq \underline{\dim }_{M}\left (\supp \left (\varrho \right )\right )$.
On the other hand, for $q<0$, our assumption gives
\begin{align*}
0 & \leq \widetilde{\beta }_{\varrho }\left (q\right )=\limsup _{n
\rightarrow \infty }\frac{1}{\log 2^{n}}\log \sum _{C\in \mathcal{D}_{n}}
\varrho \left (\left (C\right )_{1}\right )^{q}
\\
& \leq \limsup _{n\rightarrow \infty }\frac{1}{\log 2^{n}}\log \left (
\max _{C\in \mathcal{D}_{n}}\varrho \left (\left (C\right )_{1}
\right )^{q}\sum _{C\in \mathcal{D}_{n}}1\right )
\\
& \leq \limsup _{n\rightarrow \infty }
\frac{\log \left (\max _{C\in \mathcal{D}_{n}}\varrho \left (\left (C\right )_{1}\right )^{q}\right )}{n\log 2}+
\frac{\log \card \left (\mathcal{D}_{n}\right )}{\log 2^{n}}
\\
& \leq \limsup _{n\rightarrow \infty }
\frac{-q\max _{C\in \mathcal{D}_{n}}-\log \left (\varrho \left (\left (C\right )_{1}\right )\right )}{n\log 2}+
\frac{\log \card \left (\mathcal{D}_{n}\right )}{\log 2^{n}}
\\
& \leq -q\limsup _{n\rightarrow \infty }\frac{M_{n}}{n\log 2}+
\overline{\dim }_{M}\left (\supp \left (\varrho \right )\right )<
\infty .
\end{align*}
Hence, $\widetilde{\beta }_{\varrho }$ is finite in a neighborhood of
$0$ and in particular continuous in $0$. Consequently, we have
\begin{equation*}
\widetilde{\beta }_{\varrho }\left (0\right )=\lim _{q\searrow 0}
\widetilde{\beta }_{\varrho }\left (q\right )\leq \underline{\dim }_{M}
\left (\supp \left (\varrho \right )\right )\leq \overline{\dim }_{M}
\left (\supp \left (\varrho \right )\right )=\widetilde{\beta }_{
\varrho }\left (0\right ).\qedhere
\end{equation*}
\end{proof}
We finish this section with the proof for the self-similar case without
OSC.
\begin{proof}[Proof of \reftext{Theorem~\ref{thm:SpectralDimension}}]%
In the following we use the results of \citep{Barral2020} to prove \reftext{Theorem~\ref{thm:SpectralDimension}}. We consider contracting similarities that
is, for every $i=1,\dots ,n$, $T_{i}:[0,1]\rightarrow [0,1]$,
$T_{i}(x)=r_{i}x+b_{i}$, $x\in \mathbb{R}$ with
$b_{i}\in \mathbb{R}$ and $\left |r_{i}\right |<1$. For given probability
vector $(p_{1},\ldots ,p_{n})\in (0,1)^{n}$ let $\tau $ be the analytic
function defined in \reftext{(\ref{eq:Def_tau})} and set, as before,
\begin{equation*}
\widetilde{q}\coloneqq \inf \left (\left \{  q\in (0,1):\tau '(q)q-
\tau (q)\geq -1\right \}  \cup \left \{  1\right \}  \right ).
\end{equation*}
Let $\varrho $ be the unique Borel probability measure defined in \reftext{(\ref{eq:Simi-1})}.
By the result from \citep[Theorem 1.2]{Barral2020} the $L^{q}$-spectrum
exists as a limit on $[0,1]$ with
\begin{equation*}
\beta _{\varrho }(q)=
\begin{cases}
1+{\displaystyle
\frac{q\left (\tau (\widetilde{q})-1\right )}{\widetilde{q}}} & q\in [0,
\widetilde{q}),
\\
\tau (q), & q\in [\widetilde{q},1].
\end{cases}
\end{equation*}
\reftext{Fig.~\ref{fig:Lq_overlap}} on page \pageref{fig:Lq_overlap} illustrates
how the spectral dimension depends on the position of
$\widetilde{q}$ in $\left [0,1\right ]$. Now, \reftext{Theorem~\ref{thm:SpectralDimension}} follows from this observation combined with
\citep[Theorem 1.1, Theorem 1.2]{KN2022}.
\end{proof}
 
\subsection{Gibbs measure for $\mathcal{C}^{1+\gamma }$-IFS under the OSC}
\label{sec5.3}

Let $\varrho $ and $\nu $ be defined as in Section~\ref{sec:Conformal-iterated-function}.
In the following we assume that $\psi $ in the definition of the Gibbs
measure $\nu $  is H\"{o}lder continuous and the underlying IFS
$\left \{  T_{1},\dots ,T_{m}\right \}  $ is $\mathcal{C}^{1+\gamma }$, which
implies $\varphi $ is H\"{o}lder continuous, in which case the following
refined bounded distortion property holds (see
\citep[Lemma 3.4]{MR2927378}).
\begin{lem}[Strong Bounded Distortion Property] 
\label{lem:_strong_Bounded-distortion_property}
Assume $T_{1},\dots ,T_{n}$ are $\mathcal{C}^{1+\gamma }$-IFS then we have
the following \emph{strong bounded distortion property} (\textbf{\emph{sBDP}}).
There exists a sequence of positive numbers
$\left (a_{n}\right )_{n\in \mathbb{N}}$ converging to $1$ such that for
$\omega ,\eta \in I^{*}$ and $x,y\in T_{\omega }([0,1])$ we have
\begin{equation*}
a_{|\omega |}^{-1}\leq \frac{T_{\eta }'(x)}{T_{\eta }'(y)}\leq a_{|
\omega |}.
\end{equation*}
\end{lem}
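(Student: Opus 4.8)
The plan is to follow the same telescoping argument already used in the proof of Lemma~\ref{lem:weak_Bounded-distortion_Property} (and, as indicated, in \citep[Lemma 3.4]{MR2927378}), but now to exploit the extra regularity coming from the $\mathcal{C}^{1+\gamma}$ assumption. The crucial additional input is that $\log|T_i'|$ is $\gamma$-H\"older continuous on $[0,1]$ for every $i\in I$: indeed $T_i'$ is $\gamma$-H\"older by hypothesis and, by condition (2) in the definition of a $\mathcal{C}^1$-IFS, bounded away from $0$ (and trivially bounded above on the compact interval $[0,1]$), so composing with $\log$ preserves the H\"older exponent. Let $H>0$ be a common H\"older constant, i.e. $\bigl|\log|T_i'(a)|-\log|T_i'(b)|\bigr|\le H|a-b|^{\gamma}$ for all $i\in I$ and $a,b\in[0,1]$, and let $0<R<1$ be a common contraction bound for $T_1,\dots,T_n$.

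First I would write, for $\eta=\eta_1\cdots\eta_l\in I^{*}$ and $x,y\in T_{\omega}([0,1])$, the chain rule decomposition $\log|T_\eta'(x)|=\sum_{k=1}^{l}\log\bigl|T'_{\eta_k}\bigl(T_{\sigma^{k}\eta}(x)\bigr)\bigr|$ (with $T_{\sigma^{l}\eta}=\mathrm{id}$), and likewise for $y$, so that
\[
\bigl|\log|T_\eta'(x)|-\log|T_\eta'(y)|\bigr|\le\sum_{k=1}^{l}\bigl|\log|T'_{\eta_k}(T_{\sigma^{k}\eta}(x))|-\log|T'_{\eta_k}(T_{\sigma^{k}\eta}(y))|\bigr|.
\]
Since $T_{\sigma^{k}\eta}$ is a composition of $l-k$ of the maps $T_i$, it contracts by at least $R^{l-k}$, and since $x,y\in T_{\omega}([0,1])$ we have $|x-y|\le R^{|\omega|}$; moreover all the points $T_{\sigma^{k}\eta}(x),T_{\sigma^{k}\eta}(y)$ lie in $[0,1]$, where the H\"older bound applies. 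Hence $|T_{\sigma^{k}\eta}(x)-T_{\sigma^{k}\eta}(y)|\le R^{l-k}|x-y|\le R^{l-k}R^{|\omega|}$, and plugging this into the H\"older estimate and summing the geometric series $\sum_{k=1}^{l}R^{\gamma(l-k)}=\sum_{j=0}^{l-1}R^{\gamma j}\le(1-R^{\gamma})^{-1}$ yields
\[
\bigl|\log|T_\eta'(x)|-\log|T_\eta'(y)|\bigr|\le\frac{H}{1-R^{\gamma}}\,R^{\gamma|\omega|}.
\]

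To finish, I would set $a_m\coloneqq\exp\!\bigl(\tfrac{H}{1-R^{\gamma}}R^{\gamma m}\bigr)$; then $a_m\to1$ as $m\to\infty$, and exponentiating the last display gives $a_{|\omega|}^{-1}\le|T_\eta'(x)|/|T_\eta'(y)|\le a_{|\omega|}$. Finally one observes that each $T_i'$ has constant sign on $[0,1]$ (it is continuous and, by condition (2), never vanishes), hence so does the product $T_\eta'$, with sign independent of the evaluation point; therefore $T_\eta'(x)/T_\eta'(y)>0$ and equals $|T_\eta'(x)|/|T_\eta'(y)|$, which gives the claim. There is no genuine obstacle here; the only points needing a little care are the verification that $\log|T_i'|$ is truly $\gamma$-H\"older with a constant uniform in $i$ (this is where the lower bound on $|T_i'|$ enters), the observation that the geometric bound $\sum_{k}R^{\gamma(l-k)}\le(1-R^{\gamma})^{-1}$ is uniform in the length $l=|\eta|$ so that the resulting sequence depends only on $|\omega|$, and the bookkeeping of signs required to drop the absolute values in the statement.
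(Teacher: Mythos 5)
Your proof is correct. The paper itself does not prove this lemma but cites \citep[Lemma 3.4]{MR2927378}; you fill that gap with the expected argument, namely the same telescoping decomposition used in the paper's own proof of the weak bounded distortion property (\reftext{Lemma~\ref{lem:weak_Bounded-distortion_Property}}), upgraded by the H\"older estimate. The three points you single out as needing care are exactly the right ones: that $\log|T_i'|$ inherits $\gamma$-H\"older continuity uniformly in $i$ because $|T_i'|$ is bounded away from $0$ and $\infty$ on $[0,1]$; that the geometric tail $\sum_{k=1}^{l}R^{\gamma(l-k)}\le(1-R^{\gamma})^{-1}$ is uniform in $l=|\eta|$, so the resulting bound $a_{|\omega|}=\exp\!\bigl(\tfrac{H}{1-R^{\gamma}}R^{\gamma|\omega|}\bigr)$ depends only on $|\omega|$ and tends to $1$; and that each $T_\eta'$ has constant sign so that the absolute values can be dropped in the final ratio. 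This matches the mechanism that \citep[Lemma 3.4]{MR2927378} is known to use, so there is no genuine divergence in method.
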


Using the sBDP, we can improve \reftext{Lemma~\ref{lem:i-th_Eigenvalue}} in the following
way.
\begin{lem} 
\label{lem:i-th_Eigenvalue-1}
For all $i\in \mathbb{N}$, $\omega \in I^{*}$ and
$x,y\in I^{\mathbb{N}}$, we have
\begin{equation*}
\frac{\lambda _{\varrho ,\Lambda ,[0,1]}^{i}}{\mathrm{e}^{S_{|\omega |}\varphi (\omega y)+S_{|\omega |}\psi (\omega x)+d_{0}}}
\leq \lambda _{\varrho ,\Lambda ,I_{\omega }}^{i}=\lambda _{\nu _{
\omega },\Lambda _{\omega },[0,1]}^{i}\leq
\frac{\lambda _{\varrho ,\Lambda ,[0,1]}^{i}}{\mathrm{e}^{S_{|\omega |}\varphi (\omega y)+S_{|\omega |}\psi (\omega x)-d_{0}}},
\end{equation*}
where
$d_{0}\coloneqq \log \left (a_{0}\right )+\sum _{k=0}^{^{\infty }}
\var _{k}(\psi )$ and $a_{0}$ is defined in \reftext{Lemma~\ref{lem:_strong_Bounded-distortion_property}}.
\end{lem}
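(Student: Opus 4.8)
The goal is to refine \reftext{Lemma~\ref{lem:i-th_Eigenvalue}} by replacing the crude $\sup$/$\inf$ bounds $S_{\omega,\eta}\varphi$, $s_{\omega,\eta}\varphi$ and the distortion term $b_{|\omega|}$ by the pointwise Birkhoff sums $S_{|\omega|}\varphi(\omega y)$, $S_{|\omega|}\psi(\omega x)$ evaluated along arbitrary codings, at the cost of a \emph{bounded} error $d_0$ (rather than an $o(|\omega|)$ error). The starting point is again the scaling identity from \reftext{Lemma~\ref{lem:ScalingProperty}}, which gives the equality $\lambda_{\varrho,\Lambda,I_\omega}^i=\lambda_{\varrho_\omega,\Lambda_\omega,[0,1]}^i$ for free. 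For the inequalities, I would follow the same Rayleigh-quotient comparison as in the proof of \reftext{Lemma~\ref{lem:i-th_Eigenvalue}}: for $f\in H_0^1([0,1])$ one has
\[
\frac{\int_{[0,1]}(\nabla_{\Lambda_\omega}f)^2\,\d\Lambda_\omega}{\int_{[0,1]}f^2\,\d\varrho_\omega}
=\frac{\int_{[0,1]}(\nabla_{\Lambda}f)^2|T_\omega'|^{-1}\,\d\Lambda}{\int_{[0,1]}f^2\,\e^{S_{|\omega|}\psi\circ\pi^{-1}\circ T_\omega}\,\d\varrho},
\]
so the task reduces to two-sided bounds on $|T_\omega'|$ (over $[0,1]$) and on $\e^{S_{|\omega|}\psi\circ\pi^{-1}\circ T_\omega}$ (over $K$) by $\e^{S_{|\omega|}\varphi(\omega y)\pm(\text{bounded})}$ and $\e^{S_{|\omega|}\psi(\omega x)\pm(\text{bounded})}$ respectively, uniformly in the evaluation points. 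Then \reftext{Lemma~\ref{lem:dom_vs_H01_Minmax}} converts the Rayleigh-quotient comparison into the eigenvalue inequality for every index $i$.

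\textbf{The two key estimates.} For the geometric factor: by the identity $|T_\omega'(x)|=\e^{S_{|\omega|}\varphi(\omega\alpha_x)}$ recorded before \reftext{(\ref{eq:Pressure})}, any two evaluations differ only through $\varphi$ along codings that agree on the first $|\omega|$ symbols, hence by $\sum_{k\ge 0}\var_k(\varphi)$; but in the $\mathcal{C}^{1+\gamma}$ case this is \emph{finite} (H\"older continuity of $\varphi$), and in fact the sBDP of \reftext{Lemma~\ref{lem:_strong_Bounded-distortion_property}} already packages the distortion of $T_\omega'$ on $T_\omega([0,1])$ — actually here I need distortion of $T_\omega'$ over all of $[0,1]$, which is controlled by $\log a_0$ via the $\eta=\varnothing$, $\omega$ arbitrary instance of the sBDP. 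So $|T_\omega'(x)|\asymp \e^{S_{|\omega|}\varphi(\omega y)}$ with multiplicative error $\e^{\pm\log a_0}$, uniformly in $x\in[0,1]$, $y\in I^{\mathbb N}$. For the potential factor: $\psi\circ\pi^{-1}\circ T_\omega(z)$ for $z\in K$ equals $\psi$ evaluated at a coding beginning with $\omega$, so $S_{|\omega|}\psi$ of it differs from $S_{|\omega|}\psi(\omega x)$ by at most $\sum_{k=0}^{\infty}\var_k(\psi)<\infty$ (H\"older continuity of $\psi$), uniformly in $x$. Combining, the two-sided distortion of the full Rayleigh quotient is captured by $d_0=\log a_0+\sum_{k=0}^\infty\var_k(\psi)$, which is exactly the constant in the statement.

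\textbf{Main obstacle.} The genuinely delicate point — and the only place the $\mathcal{C}^{1+\gamma}$ and H\"older-$\psi$ hypotheses are essential rather than cosmetic — is that both variation sums $\sum_k\var_k(\varphi)$ and $\sum_k\var_k(\psi)$ must \emph{converge}, so that the error is a true constant $d_0$ independent of $|\omega|$; in the merely $\mathcal C^1$/weak-Gibbs setting of \reftext{Lemma~\ref{lem:i-th_Eigenvalue}} one only had $o(|\omega|)$. Care is also needed that the sBDP as stated controls $T_\eta'$ on $T_\omega([0,1])$, whereas here I want distortion of $T_\omega'$ itself over $[0,1]$; this is handled by writing $T_\omega=T_{\omega_1}\circ T_{\sigma\omega}$ and chaining, or more directly by noting that the $\eta=\omega$, with base point ranging over $[0,1]=T_\varnothing([0,1])$, instance gives the needed bound with constant $a_0$. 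Once these bounded-error estimates are in hand, the rest is a verbatim repetition of the variational argument in \reftext{Lemma~\ref{lem:i-th_Eigenvalue}} using \reftext{Lemma~\ref{lem:dom_vs_H01_Minmax}}, so I would keep that part terse.
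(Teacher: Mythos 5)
Your proposal is correct and follows essentially the same route as the paper: the scaling identity for the equality, the Rayleigh-quotient comparison, the sBDP applied with $\eta=\omega$ and base points in $T_\varnothing([0,1])=[0,1]$ for the geometric factor, the summability $\sum_k\var_k(\psi)<\infty$ (H\"older) for the potential factor, and \reftext{Lemma~\ref{lem:dom_vs_H01_Minmax}} to pass to every eigenvalue index. The only minor imprecision is in the "main obstacle" paragraph, where you speak of needing $\sum_k\var_k(\varphi)<\infty$ — the geometric error in $d_0$ is actually packaged as $\log a_0$ from the sBDP (which also controls distortion off the attractor, over all of $[0,1]$), not as $\sum_k\var_k(\varphi)$ — but you identify the correct sBDP instance in the same paragraph, so this is cosmetic.
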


\begin{proof}
For all $\omega \in I^{*}$ and $x,z\in I^{\mathbb{N}}$, we have
\begin{equation*}
\left |S_{|\omega |}\psi (\omega x)-S_{|\omega |}\psi (\omega z)
\right |\leq \sum _{k=0}^{\infty }\var _{k}(\psi )
\end{equation*}
and for all $y,v\in [0,1]$ by \reftext{Lemma~\ref{lem:_strong_Bounded-distortion_property}}, we obtain
\begin{equation*}
\left |\log \left (\left |T_{\omega }'(y)\right |\right )-\log \left (
\left |T_{\omega }'(v)\right |\right )\right |\leq \log \left (a_{0}
\right ).
\end{equation*}
Since there exists $y\in K$ such that $\pi (x)=y$, we obtain
$\log \left (\left |T_{\omega }'(y)\right |\right )=S_{|\omega |}
\varphi (\omega x)$. Thus, we infer
\begin{equation*}
\frac{1}{\mathrm{e}^{S_{|\omega |}\varphi (\omega y)+S_{|\omega |}\psi (\omega x)+d_{0}}}
\leq
\frac{\int _{[0,1]}(\nabla _{\Lambda }f)^{2}\left |T_{\omega }'\right |^{-1}\;\mathrm{d}\Lambda }{\int f^{2}\mathrm{e}^{S_{|\omega |}\psi \circ \pi ^{-1}\circ T_{\omega }}\;\mathrm{d}\varrho }
\leq
\frac{1}{\mathrm{e}^{S_{|\omega |}\varphi (\omega y)+S_{|\omega |}\psi (\omega x)-d_{0}}}.
\end{equation*}
To complete the proof, we can argue in the same way as in the proof of
\reftext{Lemma~\ref{lem:_strong_Bounded-distortion_property}}.
\end{proof}
\begin{lem} 
\label{lem:Partition}%
For every $t>c>0$, we have
\begin{align*}
\Gamma _{t} & \coloneqq \left \{  \omega \in I^{*}\colon S_{\omega }
\xi <\log (c/t)\leq S_{\omega ^{-}}\xi \right \}
\end{align*}
is a partition of $I^{\mathbb{N}}$. In particular, for every
$\omega \in \Gamma _{t}$ and $x\in I^{\mathbb{N}}$, we have
\begin{equation*}
\log (M\mathrm{e}^{d_{0}}t/c)\geq -S_{|\omega |}\xi (\omega x)
\end{equation*}
with $M\coloneqq \exp \left (\max \left (-\xi \right )\right )$ and
$d_{0}\coloneqq \log (a_{0})+\sum _{k=0}^{\infty }\var _{k}(\psi )$ with
$a_{0}$ defined in \reftext{Lemma~\ref{lem:_strong_Bounded-distortion_property}}.
\end{lem}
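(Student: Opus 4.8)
The plan is to verify the two claims of Lemma~\ref{lem:Partition} separately. For the partition claim, I would argue much as in the proofs of Lemma~\ref{lem:WeakGibbsEstimation} or Corollary~\ref{cor:Approx}: the map $\omega \mapsto S_\omega \xi$ is superadditive along prefixes in the sense that $S_{\omega}\xi \le S_{\omega^-}\xi$ (since $S_\omega \xi = \sup_{x\in[\omega]} S_{|\omega|}\xi(x)$ and $S_{|\omega|}\xi(x) = S_{|\omega^-|}\xi(\sigma\text{-related point}) + \xi(\cdots)$ with $\xi < 0$ because $\xi = \psi + \varphi$ and $\varphi < 0$, $P(\psi) = 0$ forces $\max\xi$ to be controlled — more directly, each increment adds a value $\le \max\xi < 0$ up to a bounded variation term, but uniform negativity of $\xi$ is not literally needed: what is needed is $S_{|\omega|}\xi \to -\infty$ as $|\omega|\to\infty$, which follows from $\varphi \le \log R < 0$ for the common contraction bound $R$ and $\psi$ bounded). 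Hence along any ray $\omega \in I^{\mathbb N}$ the sequence $n\mapsto S_{\omega|_n}\xi$ is eventually strictly below $\log(c/t)$ and starts (for $n=0$) at $0 > \log(c/t)$ since $t > c$; therefore there is a unique first index $n$ where it drops below $\log(c/t)$, giving exactly one prefix of each ray in $\Gamma_t$. This yields $\bigcup_{\omega\in\Gamma_t}[\omega] = I^{\mathbb N}$ with pairwise disjoint cylinders.

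For the second claim, fix $\omega \in \Gamma_t$ and $x \in I^{\mathbb N}$. By definition of $\Gamma_t$ we have $\log(c/t) \le S_{\omega^-}\xi = \sup_{y\in[\omega^-]}S_{|\omega|-1}\xi(y)$. I would then relate $S_{|\omega|}\xi(\omega x)$ to $S_{|\omega|-1}\xi$ evaluated at a point of $[\omega^-]$: writing $S_{|\omega|}\xi(\omega x) = S_{|\omega|-1}\xi(\omega x) + \xi(\sigma^{|\omega|-1}(\omega x))$, the last term is $\ge \min \xi = -\log M$, so $S_{|\omega|}\xi(\omega x) \ge S_{|\omega|-1}\xi(\omega x) - \log M \ge s_{\omega^-}\xi - \log M$. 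The gap between $s_{\omega^-}\xi$ (infimum over $[\omega^-]$) and $S_{\omega^-}\xi$ (supremum over $[\omega^-]$) is controlled by the bounded distortion estimates: for $\psi$ the variation sum $\sum_{k\ge 0}\var_k(\psi)$ bounds the $\psi$-part, and for $\varphi$ the strong bounded distortion property (Lemma~\ref{lem:_strong_Bounded-distortion_property}) bounds the $\varphi$-part by $\log a_0$; together these give $S_{\omega^-}\xi - s_{\omega^-}\xi \le d_0$ with $d_0 = \log(a_0) + \sum_{k\ge 0}\var_k(\psi)$. Chaining these inequalities:
\begin{equation*}
-S_{|\omega|}\xi(\omega x) \le -s_{\omega^-}\xi + \log M \le -S_{\omega^-}\xi + d_0 + \log M \le -\log(c/t) + d_0 + \log M = \log(M\mathrm{e}^{d_0}t/c),
\end{equation*}
which is the stated estimate.

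The main obstacle is getting the bounded-distortion bookkeeping exactly right — in particular being careful about which argument ($\omega x$ versus an arbitrary point of $[\omega]$ or $[\omega^-]$) each supremum/infimum is taken over, and checking that the constant $d_0$ as defined truly dominates the combined oscillation of $S_{|\omega|}\xi$ over a cylinder uniformly in $|\omega|$. This uses that $\var_k(\psi)$ is summable (H\"older continuity of $\psi$) and that the $\mathcal C^{1+\gamma}$ assumption upgrades the weak distortion bound of Lemma~\ref{lem:weak_Bounded-distortion_Property} to the summable/uniform one via Lemma~\ref{lem:_strong_Bounded-distortion_property}. The partition claim itself is routine once one notes $S_{|\omega|}\xi \to -\infty$; the only subtlety is the strict-versus-nonstrict inequalities in the definition of $\Gamma_t$, which are arranged precisely so that each ray has a unique representative.
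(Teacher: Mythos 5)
Your proof is correct and follows essentially the same route as the paper's: disjointness of the cylinders $[\omega]$, $\omega\in\Gamma_t$, from the monotonicity $S_{\omega}\xi\le S_{\omega^-}\xi$ (a consequence of $\max\xi<0$), covering from $S_{\omega|_n}\xi\to-\infty$ together with $S_\varnothing\xi=0>\log(c/t)$, and the bound from chaining $\log(c/t)\le S_{\omega^-}\xi$ with the oscillation estimate $S_{\omega^-}\xi-s_{\omega^-}\xi\le d_0$ (summable variation of $\psi$ plus the sBDP constant $\log a_0$ for the $\varphi$-part) and the last-increment bound $\xi\ge-\log M$. One parenthetical remark is misleading and worth striking: the claim that ``uniform negativity of $\xi$ is not literally needed'' is wrong for the \emph{disjointness} part of the partition claim — without $\max\xi<0$ the sequence $n\mapsto S_{\omega|_n}\xi$ need not be monotone, so a single ray could have several prefixes in $\Gamma_t$; only the covering step can get by with $S_n\xi\to-\infty$ alone. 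Since you do in fact establish $S_\omega\xi\le S_{\omega^-}\xi$ via $\xi<0$, the argument is logically complete; just delete the aside.
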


\begin{proof}
First note, that two cylinder sets are either disjoint or one is contained
in the other. From $\omega \in \Gamma _{t}$ and all $\eta \in I^{*}$, we
have
\begin{align*}
S_{\omega \eta }\xi \leq \sup _{x\in I^{\mathbb{N}}}S_{|\omega |}\xi (
\omega \eta x)\leq & \sup _{x\in I^{\mathbb{N}}}S_{|\omega |}\xi (
\omega x)=S_{\omega }\xi <\log (c/t),
\end{align*}
where we used $\max \xi <0$, which shows for
$\eta \neq \varnothing $ that $\omega \eta \notin \Gamma _{t}$. Moreover,
since $\min \xi <0$, it follows that $S_{\omega }\xi $ converge to
$-\infty $ for $|\omega |\rightarrow \infty $. Consequently, the set
$\Gamma _{t}$ is finite. In particular, for every
$\omega \in I^{\mathbb{N}}$ we have
$S_{\omega |_{n}}\xi \rightarrow -\infty $ as $n$ tends to infinity. Therefore,
there exists $N\in \mathbb{N}$ such that
$S_{\omega |_{N}}\xi <\log (c/t)\leq S_{\omega |_{N-1}}\xi $ and the first
statement follows. For the second claim fix $\omega \in \Gamma _{t}$, then
\begin{align*}
\log (t/c) & \geq -(S_{\omega ^{-}}\xi )=-S_{|\omega -1|}\xi (\omega x)-
\left (S_{\omega ^{-}}\xi -S_{|\omega -1|}\xi (\omega x)\right )
\\
& \geq -S_{|\omega -1|}\xi (\omega x)-d_{0}
\\
& =-S_{|\omega -1|}\xi (\omega x)-\xi \left (\sigma ^{|\omega |-1}
\left (\omega \right )x\right )+\xi \left (\sigma ^{|\omega |-1}
\left (\omega \right )x\right )-d_{0}
\\
& \geq -S_{|\omega |}\xi (\omega x)-\log (M)-d_{0},
\end{align*}
and hence we obtain
$\log \left (M\mathrm{e}^{d_{0}}t/c\right )\geq -S_{|\omega |}\xi (
\omega x)$.
\end{proof}
Recall for $m\in \mathbb{N}$ and
$x\in \left (I^{m}\right )^{\mathbb{N}}$
\begin{equation*}
\xi ^{m}(x)=\sum _{i=0}^{m-1}\xi \left (\sigma ^{i}(x)\right ).
\end{equation*}
 
\begin{lem} 
\label{lem:decompositionDichrichlet}
Set
$d_{0}\coloneqq \log \left (a_{0}\right )+\sum _{k=0}^{\infty }\var _{k}
\psi $ where $a_{0}$ is defined in \reftext{Lemma~\ref{lem:_strong_Bounded-distortion_property}}. Then for $t>c>0$,
$m\in \mathbb{N}$ such that $-m\max \xi -d_{0}>0$,
$x\in \left (I^{m}\right )^{\mathbb{N}}$, we have that
\begin{align*}
\Gamma _{t,m}^{L} & \coloneqq \left \{  \omega \in \left (I^{m}\right )^{*}:-
\widetilde{S}_{|\omega |_{m}}\xi ^{m}(\omega x)\leq \log (t/c)<\min _{v
\in I^{m}}-\widetilde{S}_{|\omega v|_{m}}\xi ^{m}(\omega vx)\right \}
\end{align*}
defines a disjoint family, meaning $\omega \neq \omega '$ implies
$[\omega ]\cap [\omega ']=\varnothing $. With
$k_{m}\coloneqq \exp (-m\max \xi )$ and for every
$\omega \in \left (I^{m}\right )^{*}$
\begin{equation*}
\log \left (t\mathrm{e}^{d_{0}}/(k_{m}c)\right )<-\widetilde{S}_{|
\omega |_{m}}\xi ^{m}(\omega x)\leq \log (t/c),
\end{equation*}
we have $\omega \in \Gamma _{t,m}^{L}$.
\end{lem}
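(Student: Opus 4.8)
The plan is to reduce both assertions to a single ``monotonicity up to distortion'' estimate for the map $\gamma\mapsto-\widetilde{S}_{|\gamma|_m}\xi^m(\gamma x)$ on $\left(I^m\right)^*$. First I would record the translation $\widetilde{S}_{|\gamma|_m}\xi^m(\gamma x)=S_{m|\gamma|_m}\xi(\gamma x)$ (the identity $\widetilde{S}_nf^m=S_{m\cdot n}f$ stated before \reftext{Lemma~\ref{lem:PressureFunctionApprox}}) together with a \emph{distortion bound}: for $\gamma\in\left(I^m\right)^*$ and $y,y'\in I^{\mathbb N}$,
\begin{equation*}
\left|\widetilde{S}_{|\gamma|_m}\xi^m(\gamma y)-\widetilde{S}_{|\gamma|_m}\xi^m(\gamma y')\right|\le d_0 .
\end{equation*}
To see this I would split $\xi=\psi+\varphi$: the $\psi$-contribution is at most $\sum_{k\ge 0}\var_k(\psi)$ because $\sigma^{j}(\gamma y)$ and $\sigma^{j}(\gamma y')$ agree on their first $m|\gamma|_m-j$ coordinates, while the $\varphi$-contribution equals $\left|\log\left|T'_\gamma(\pi(y))\right|-\log\left|T'_\gamma(\pi(y'))\right|\right|\le\log a_0$ by \reftext{Lemma~\ref{lem:_strong_Bounded-distortion_property}} applied with empty outer word (since $\pi(y),\pi(y')\in K\subset[0,1]$), using $S_{|\gamma|}\varphi(\gamma z)=\log\left|T'_\gamma(\pi(z))\right|$. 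I would also note that, since $\max\xi<0$, for every $v\in I^m$ one has $-\xi^m(vx)=-S_m\xi(vx)\ge-m\max\xi=\log k_m$.

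Combining these with the cocycle identity $\widetilde{S}_{|\gamma v|_m}\xi^m(\gamma vx)=\widetilde{S}_{|\gamma|_m}\xi^m(\gamma vx)+\xi^m(vx)$ (valid because $\widetilde{\sigma}^{|\gamma|_m}(\gamma vx)=vx$) I obtain, for all $\gamma\in\left(I^m\right)^*$ and $v\in I^m$,
\begin{equation*}
-\widetilde{S}_{|\gamma v|_m}\xi^m(\gamma vx)\ \ge\ -\widetilde{S}_{|\gamma|_m}\xi^m(\gamma x)-d_0-m\max\xi\ =\ -\widetilde{S}_{|\gamma|_m}\xi^m(\gamma x)+\varepsilon ,
\end{equation*}
with $\varepsilon\coloneqq-m\max\xi-d_0>0$ by the standing assumption on $m$. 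Thus $-\widetilde{S}_{|\gamma|_m}\xi^m(\gamma x)$ strictly increases (by at least $\varepsilon$) under every one-block extension. I expect this to be the only real obstacle in the proof: the evaluation point carries the \emph{fixed} tail $x$, which is replaced at each extension, so naive monotonicity fails — it is precisely the sBDP together with the H\"older control of $\psi$, packaged into $d_0$, and the hypothesis $-m\max\xi>d_0$, that rescue it.

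Granting the increment estimate, the two assertions follow quickly. For disjointness, if $\omega\ne\omega'$ both lie in $\Gamma_{t,m}^L$ with $[\omega]\cap[\omega']\ne\emptyset$, then, cylinders being nested or disjoint, one of them, say $\omega$, is a proper extension of the other, $\omega=\omega'v\eta'$ with $v\in I^m$ and $\eta'\in\left(I^m\right)^*$; applying the increment estimate $|\eta'|_m$ times and then the defining $\min$-condition of $\omega'$ gives $-\widetilde{S}_{|\omega|_m}\xi^m(\omega x)\ge-\widetilde{S}_{|\omega'v|_m}\xi^m(\omega'vx)>\log(t/c)$, contradicting the other defining condition $-\widetilde{S}_{|\omega|_m}\xi^m(\omega x)\le\log(t/c)$ of $\omega$. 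For the inclusion criterion, assume $\log\!\left(t\mathrm{e}^{d_0}/(k_mc)\right)<-\widetilde{S}_{|\omega|_m}\xi^m(\omega x)\le\log(t/c)$; the right inequality is one of the two defining conditions of $\Gamma_{t,m}^L$, and for every $v\in I^m$ the increment estimate yields $-\widetilde{S}_{|\omega v|_m}\xi^m(\omega vx)\ge-\widetilde{S}_{|\omega|_m}\xi^m(\omega x)+\varepsilon>\log\!\left(t\mathrm{e}^{d_0}/(k_mc)\right)+\varepsilon$, which equals $\log(t/c)$ because $\log k_m=-m\max\xi$ and $\varepsilon=-m\max\xi-d_0$; hence $\min_{v\in I^m}\bigl(-\widetilde{S}_{|\omega v|_m}\xi^m(\omega vx)\bigr)>\log(t/c)$ and $\omega\in\Gamma_{t,m}^L$. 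A final sanity check I would make is that the arithmetic with $k_m$ makes the ``window'' $\bigl(\log(t\mathrm{e}^{d_0}/(k_mc)),\log(t/c)\bigr]$ have length exactly $\varepsilon$, matching the per-block increment.
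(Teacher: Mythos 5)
Your proof is correct and follows essentially the same approach as the paper's: both rest on (i) the distortion bound $\bigl|\widetilde{S}_{|\gamma|_m}\xi^m(\gamma y)-\widetilde{S}_{|\gamma|_m}\xi^m(\gamma y')\bigr|\le d_0$, which you derive cleanly by splitting $\xi=\psi+\varphi$ and invoking the sBDP with empty outer word, (ii) the cocycle identity for $\widetilde{S}$, and (iii) the pointwise bound $-\xi^m(vx)\ge-m\max\xi$. The only cosmetic difference is that you package these into a one-block increment estimate of size $\varepsilon=-m\max\xi-d_0$ and iterate it to rule out multi-block extensions, whereas the paper applies the distortion bound once to change the tail from $x$ to $\eta x$ and then absorbs all the intermediate $\xi^m$-terms via the cocycle in a single step; both routes give the strict inequality needed, and your arithmetic verification that the window $\bigl(\log(t\mathrm{e}^{d_0}/(k_mc)),\,\log(t/c)\bigr]$ has length exactly $\varepsilon$ is a nice consistency check.
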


\begin{proof}
For every $\omega \in \Gamma _{t,m}^{L}$ and every $v\in I^{m}$ we have
$\log (t/c)<-\widetilde{S}_{|\omega v|_{m}}\xi ^{m}(\omega vx)$ implying
$\omega v\notin \Gamma _{t,m}^{L}$. Further, using
$-m\max \xi -d_{0}>0$ and the BDP, for every
$\eta \in \left (I^{m}\right )^{*}\setminus \left \{  \varnothing
\right \}  $ we have
\begin{align*}
\log (c/t)&>\widetilde{S}_{|\omega v|_{m}}\xi ^{m}(\omega vx) \\
& \geq
\widetilde{S}_{|\omega v|_{m}}\xi ^{m}(\omega v\eta x)-d_{0}
\\
& =\widetilde{S}_{|\omega v|_{m}}\xi ^{m}(\omega v\eta x)+\sum _{i=0}^{|
\eta |_{m}-1}\left (\xi ^{m}\left (\widetilde{\sigma }^{i}\left (\eta
\right )x\right )-\xi ^{m}\left (\widetilde{\sigma }^{i}\left (\eta
\right )x\right )\right )-d_{0}
\\
& \geq \widetilde{S}_{|\omega v\eta |_{m}}\xi ^{m}(\omega v\eta x)-m
\cdot |\eta |_{m}\max \xi -d_{0}
\\
& \geq \widetilde{S}_{|\omega v\eta |_{m}}\xi ^{m}(\omega v\eta x)-m
\cdot \max \xi -d_{0}
\\
& >\widetilde{S}_{|\omega v\eta |_{m}}\xi ^{m}(\omega v\eta x).
\end{align*}
Thus, for every $\omega \in \Gamma _{t,m}^{L}$ and
$\eta '\in \left (I^{m}\right )^{*}\setminus \left \{  \varnothing
\right \}  $ it follows $\omega \eta '\notin \Gamma _{t,m}^{L}$.

For second assertion fix $x\in \left (I^{m}\right )^{\mathbb{N}}$,
$\omega \in \left (I^{m}\right )^{*}$ and assume
\begin{equation*}
\log \left (t\mathrm{e}^{d_{0}}/(k_{m}c)\right )<-\widetilde{S}_{|
\omega |_{m}}\xi ^{m}(\omega x)\leq \log (t/c).
\end{equation*}
Using the BDP, we obtain for all $v\in I^{m}$,
$\omega \in \left (I^{m}\right )^{*}$,
\begin{align*}
\left |\widetilde{S}_{|\omega |_{m}}\xi ^{m}(\omega x)-\widetilde{S}_{|
\omega |_{m}}\xi ^{m}(\omega vx)\right | & \leq d_{0},
\end{align*}
and consequently,
\begin{align*}
\log (t/c) & <\log (k_{m})-d_{0}-\widetilde{S}_{|\omega |_{m}}\xi ^{m}(
\omega x)
\\
& \leq \log (k_{m})+\xi ^{m}(vx)-\xi ^{m}(vx)-\widetilde{S}_{|\omega |_{m}}
\xi ^{m}(\omega vx)
\\
& \leq -\widetilde{S}_{|\omega v|_{m}}\xi ^{m}(\omega vx).
\end{align*}
Since
$-\widetilde{S}_{|\omega |_{m}}\xi ^{m}(\omega x)\leq \log (t/c)$, we conclude
$\omega \in \Gamma _{t,m}^{L}$.
\end{proof}
Now we are in the position to give the proof of our last main theorem.
\begin{proof}[Proof of \reftext{Theorem~\ref{thm:SpectralAsymptotic}}]
Let $\lambda \coloneqq \lambda _{\varrho ,\Lambda }^{1}$ be the smallest
positive eigenvalue of $\Delta _{\varrho ,\Lambda }$. Then by \reftext{Lemma~\ref{lem:i-th_Eigenvalue-1}} for $\omega \in I^{*}$ and $t>0$ with
$t<\lambda /\exp (S_{\omega }\xi +d_{0})\leq \lambda _{\nu _{\omega },
\Lambda _{\omega }}^{1}$, we have
\begin{equation*}
N_{\varrho ,I_{\omega }}(t)=N_{\varrho _{\omega },\Lambda _{\omega }}(t)=0.
\end{equation*}
Now, by \reftext{Theorem~\ref{thm:CompareCountingfunctions}}, for
$t>c_{R}\coloneqq \lambda \mathrm{e}^{-d_{0}}$, we conclude
\begin{align*}
N_{\varrho }(t)\leq \sum _{\omega \in \Gamma _{t}^{R}}N_{\nu _{\omega },
\Lambda _{\omega }}(t)+2\left |\Gamma _{t}^{R}\right |+1= & 2\left |
\Gamma _{t}^{R}\right |+1,
\end{align*}
where
\begin{align*}
\Gamma _{t}^{R} & =\left \{  \omega \in I^{*}\colon S_{\omega }\xi <
\log (c_{R}/t)\leq S_{\omega ^{-}}\xi \right \}  ,
\end{align*}
which is a partition of $I^{\mathbb{N}}$ by \reftext{Lemma~\ref{lem:Partition}} for
$t>c_{R}$. Hence, for the upper bound, we are left to show that
$\left |\Gamma _{t}^{R}\right |\ll t^{z_{\varrho }}$. For this we use
\citep[Theorem 3.2]{MR3881115} adapted to our situation, i.e.
\begin{equation*}
Z\left (x,t\right )\coloneqq \sum _{n=0}^{\infty }\sum _{\sigma ^{n}y=x}
\mathbbm{1}_{\left \{  -S_{n}\xi (y)\leq \log t\right \}  }\sim G(x,
\log (t))t^{z_{\varrho }},
\end{equation*}
where $\left (x,s\right )\mapsto G(x,s)$, defined on
$I^{\mathbb{N}}\times \mathbb{R}_{>0}$, is bounded from above by inspecting
the corresponding function $G$ in \citep[Theorem 3.2]{MR3881115}, and
$s\mapsto G(x,s)$ is a constant function in the aperiodic case and a periodic
function in the periodic case. Therefore, by \reftext{Lemma~\ref{lem:Partition}}, for  $y\in I^{\mathbb{N}}$,
\begin{equation*}
\card \left (\Gamma _{t}^{R}\right )\leq Z\left (y,\mathrm{e}^{d_{0}+\max (-\xi)}t/
\lambda \right )\ll t^{z_{\varrho }}.
\end{equation*}
For the lower estimate we use an approximation argument involving the strong
bounded distortion property. Applying \reftext{Lemma~\ref{lem:decompositionDichrichlet}} for
$x\in \left (I^{m}\right )^{\mathbb{N}}$ and $m\in \mathbb{N}$ such
$\log (k_{m})-d_{0}>0$ with $k_{m}\coloneqq\exp (-\max \xi ^{m})$, we have
\begin{align*}
\left \{  \omega \in \left (I^{m}\right )^{*}:\log (t\mathrm{e}^{d_{0}}/(k_{m}c))<-
\widetilde{S}_{|\omega |_{m}}\xi ^{m}(\omega x)\leq \log (t/c)\right
\}  & \subset \Gamma _{t,m}^{L}
\end{align*}
and
\begin{equation*}
\Gamma _{t,m}^{L}=\left \{  \omega \in (I^{m})^{*}\colon -
\widetilde{S}_{|\omega |_{m}}\xi ^{m}(\omega x)\leq \log (t/c)<\min _{v
\in I^{m}}-\widetilde{S}_{|\omega v|_{m}}\xi ^{m}(\omega vx)\right \}
\end{equation*}
with $c\coloneqq\mathrm{e}^{d_{0}}\lambda $. By \reftext{Lemma~\ref{lem:i-th_Eigenvalue-1}}, for $\omega \in \Gamma _{t,m}^{L}$, we have
\begin{align*}
\lambda _{\varrho ,\Lambda ,I_{\omega }}^{1} & \leq
\frac{\lambda }{\mathrm{e}^{S_{|\omega |}\xi (\omega x)-d_{0}}}=
\frac{c}{\mathrm{e}^{\widetilde{S}_{|\omega |_{m}}\xi ^{m}(\omega x)-d_{0}}}
\leq t.
\end{align*}
For $t>c$, this leads to
\begin{align*}
N_{\varrho }(t) & \geq \sum _{\omega \in \Gamma _{t,m}^{L}}N_{
\varrho ,I_{\omega }}(t)\geq \card \left (\Gamma _{t,m}^{L}\right )
\\
& \geq \card \left (\left \{  \omega \in \left (I^{m}\right )^{*}:
\log (t\mathrm{e}^{d_{0}}/(k_{m}c))<-\widetilde{S}_{|\omega |_{m}}
\xi ^{m}(\omega x)\leq \log (t/c)\right \}  \right ).
\end{align*}
We conclude
\begin{align*}
N_{\varrho }(t) & \geq \sum _{n=0}^{\infty }\sum _{\omega \in \left (I^{m}
\right )^{n}}\mathbbm{1}_{\left \{  -\widetilde{S}_{|\omega |_{m}}\xi ^{m}(
\omega x)\leq \log (t/c)\right \}  }\\
&\quad -\sum _{n=0}^{\infty }\sum _{
\omega \in \left (I^{m}\right )^{n}}\mathbbm{1}_{\left \{  -
\widetilde{S}_{|\omega |_{m}}\xi ^{m}(\omega x)\leq \log (t\mathrm{e}^{d_{0}}/(k_{m}c))
\right \}  .}
\end{align*}
Moreover, by \reftext{Lemma~\ref{lem:PressureFunctionApprox}} we have
$0=P(z_{\varrho }\xi )=P_{\widetilde{\sigma }}(z_{\varrho }\xi ^{m})$ as
defined in \reftext{Lemma~\ref{lem:PressureFunctionApprox}}. Again,
\citep[Theorem 3.2]{MR3881115} applied to $\xi ^{m}$ gives that there exists
a function $\left (x,s\right )\mapsto \widetilde{G}(x,s)$ defined on
$\left (I^{m}\right )^{\mathbb{N}}\times \mathbb{R}_{>0}$, which is bounded
away from zero by inspecting the corresponding function $G$ in
\citep[Theorem 3.2]{MR3881115}, such that
\begin{align*}
\widetilde{Z}(x,t)\coloneqq \sum _{n=0}^{\infty }\sum _{\omega \in
\left (I^{m}\right )^{n}}\mathbbm{1}_{\left \{  -\widetilde{S}_{|
\omega |_{m}}\xi ^{m}(\omega x)\leq \log (t)\right \}  }\sim
\widetilde{G}(x,\log (t))t^{z_{\varrho }}.
\end{align*}
In the aperiodic case $s\mapsto \widetilde{G}(x,s)$ is a constant function
and hence in this case we immediately get
$t^{z_{\varrho }}\ll N_{\varrho ,\Lambda }(t)$. In the periodic case,
$s\mapsto \widetilde{G}(x,s)$ is periodic with minimal period $a>0$. For
$\ell \coloneqq \left \lceil a/\left (\log (k_{m})-d_{0}\right )
\right \rceil $, we finally have
\begin{align*}
N_{\varrho }(t) & \geq \frac{1}{\ell }\sum _{i=0}^{\ell -1}N_{\varrho }
\left (t\left (\mathrm{e}^{d_{0}}/k_{m}\right )^{i}\right )
\\
& \geq \frac{1}{\ell }\sum _{i=0}^{\ell -1}\card \left (\Gamma _{t
\left (\mathrm{e}^{d_{0}}/k_{m}\right )^{i},m}^{L}\right )
\\
& \geq \frac{1}{\ell }\left (\sum _{i=0}^{\ell -1}\widetilde{Z}\left (x,
\left (\mathrm{e}^{d_{0}}/k_{m}\right )^{i}\left (t/c\right )\right )-
\widetilde{Z}\left (x,\left (\mathrm{e}^{d_{0}}/k_{m}\right )^{i+1}
\left (t/c\right )\right )\right )
\\
& \geq \frac{1}{\ell }\left (\widetilde{Z}\left (x,t/c\right )-
\widetilde{Z}\left (x,\left (\mathrm{e}^{d_{0}}/k_{m}\right )^{\ell }
\left (t/c\right )\right )\right )
\\
& \geq \frac{1}{\ell }\left (\widetilde{Z}\left (x,t/c\right )-
\widetilde{Z}\left (x,\left (\mathrm{e}^{d_{0}}/k_{m}\right )^{a/
\left (\log (k_{m})-d_{0}\right )}\left (t/c\right )\right )\right )
\\
& \sim \frac{1}{\ell }\widetilde{G}\left (x,\log \left (t/c\right )
\right )\left (\frac{t}{c}\right )^{z_{\varrho }}-\widetilde{G}\left (x,
\log \left (t/c\right )\right )\left (\frac{t}{c}\right )^{z_{
\varrho }}\left (\left (\mathrm{e}^{d_{0}}/k_{m}\right )^{a/\left (
\log (k_{m})-d_{0}\right )}\right )^{z_{\varrho }}
\\
& =t^{z_{\varrho }}
\frac{\widetilde{G}\left (x,\log \left (t/c\right )\right )}{c^{z_{\varrho }}\ell }
\left (1-\left (\mathrm{e}^{d_{0}}/k_{m}\right )^{z_{\varrho }a/
\left (\log (k_{m})-d_{0}\right )}\right )
\\
& \gg t^{z_{\varrho }},
\end{align*}
where we used
$\log (t/c)-\log \left (\left (\mathrm{e}^{d_{0}}/k_{m}\right )^{a/
\left (\log (k_{m})-d_{0}\right )}t/c\right )=a$.
\end{proof}
 
\begin{examp}
A natural `geometric' choice for the potential $\psi$ is given by $\delta\varphi$, where $\delta\geq0$ fulfils $P(\delta\varphi)=0$ and, by Bowen's formula,  is equal to the Hausdorff dimension of the self-conformal set $K$. We then have $\xi=(1+\delta)\varphi$,  $P(\delta/(\delta+1)\xi)=0$ and consequently, $s_{\rho}=\delta/(\delta+1)$.
\end{examp}
\section*{Acknowledgement}\addcontentsline{toc}{section}{Acknowledgement}
This research was supported by the {DFG} grant   Ke 1440/3-1. We would like
to thank the anonymous referee for her/his valuable comments, which have
contributed to a significant improvement of the presentation.
 
\phantomsection\addcontentsline{toc}{section}{\refname}

\end{document}